\documentclass[reqno,a4paper,10pt]{amsart}
\usepackage[a4paper, centering]{geometry}
\usepackage[utf8]{inputenc} % allow utf-8 input
\usepackage[english]{babel}
\usepackage[T1]{fontenc}
\usepackage{lmodern}
\usepackage{caption}
\usepackage{subcaption}
\usepackage{enumerate}
\usepackage{amssymb, amsmath}
\usepackage{mathrsfs,dsfont}
\usepackage{amscd}
\usepackage{bbm}
\usepackage{amsthm}
\usepackage{cite}
\usepackage{esint }
\usepackage[mathcal]{euscript}
\usepackage[active]{srcltx}
\usepackage{verbatim}
\usepackage[colorlinks,linkcolor={blue},citecolor={blue},urlcolor={black}]{hyperref}
\usepackage[]{changebar}
\usepackage{xcolor}
\usepackage{mathtools}
\usepackage{url}            % simple URL typesetting
\usepackage{booktabs}       % professional-quality tables
\usepackage{amsfonts}       % blackboard math symbols
\usepackage{amsmath}
\DeclareRobustCommand{\EuScript}[1]{\ifmmode\mathcal{#1}\else$\mathcal{#1}$\fi}
\usepackage{nicefrac}       % compact symbols for 1/2, etc.
\usepackage{microtype}      % microtypography
\usepackage{lipsum}
\usepackage{fancyhdr}       % header
\usepackage{graphicx}       % graphics
\graphicspath{{media/}}     % organize your images and other figures under media/ folder
\usepackage{bookmark} %(fa vedere le formule)
\usepackage[a4paper, centering]{geometry}
\usepackage{epstopdf}
\usepackage{amscd}
\usepackage{color}
\usepackage{graphicx}

\usepackage{yfonts}
\usepackage{fullpage}
\usepackage{comment}
\usepackage[braket,qm]{qcircuit} % quantum circuits
\usepackage{ stmaryrd } % per un commando che chiamo lket dopo
\newtheorem{thm}{Theorem}[section]

\newtheorem*{thm*}{Theorem}

\newtheorem{lem}{Lemma}[section]

\newtheorem{prop}{Proposition}[section]

\newtheorem*{prop*}{Proposition}
\newtheorem{ass}{A}

\theoremstyle{definition}
\newtheorem{defn}{Definition}[section]

\newtheorem{ex}{Example}[section]

\theoremstyle{remark}
\newtheorem{rem}{Remark}[section]

\numberwithin{equation}{section}
\def\N{{\mathbb N}}

\def\R{{\mathbb R}}

\def\L{{\mathcal L}}

\def\H{{\mathcal H}}

\def\E{{\mathbb E}}

\def\X{{\mathcal X}}
\def\Y{{\mathscr Y}}

\def\I{{\mathbbm{1}}}
\def\cP{{\mathscr{P}}}
\def\norma #1{\left\lVert #1 \right\rVert}
\def\B{{\mathscr{B}}}
\def\P{{\mathbb{P}}}

\def\de{{\rm d}}

\def\w{{\rm W}}
\def\F{{\mathscr{F}}}

\def\M{{\mathcal{M}}}
\def\nm #1{ \left\langle #1 \right\rangle}

\newcommand{\hol}[1]{\"{#1}}

\def\W{{\mathcal W}}
\def\BL{{\rm BL}}

\def\E{{\mathbb E}}
\def\F{{\mathscr F}}
\def\B{{\mathscr B}}
\def\H{{\mathscr H}}
\def\I{{\mathcal I}}
\def\cS{{\mathfrak S}}

\def\y{{\boldsymbol y}}
\def\vH{{\boldsymbol H}}
\def\bR{{\boldsymbol \R}}
\def\bB{{\boldsymbol B}}
\def\bQ{{\boldsymbol Q}}
\def\X{{\boldsymbol X}}
\def\by{{\boldsymbol Y}}

\def\blambda{{\boldsymbol{\lambda}}}
\def\Tt{{\mathbb{R}}}
\def\R{{\mathcal{R}}}
\def\cY{{\mathfrak Y}}

\def\w2{\stackrel{2}{\rightharpoonup}}

\def\strong2{\stackrel{2}{\rightarrow}}
\definecolor{viola}{rgb}{0.3,0,0.7}
\definecolor{ciclamino}{rgb}{0.5,0,0.5}
\definecolor{rosso}{rgb}{0.8,0,0}

\makeatletter
\def\cleardoublepage{\clearpage\if@twoside \ifodd\c@page\else
\hbox{}
\thispagestyle{empty}
\newpage
\if@twocolumn\hbox{}\newpage\fi\fi\fi}
\makeatother
\title[MFG]{Gradient Mean-Field Dynamics with Measure-Valued States: Well-Posedness, Chaos, and Long-Time Stability}

\author[A. Melchor Hernandez]{Anderson Melchor Hernandez}
\address[A. Melchor Hernandez]{Dipartimento di Matematica,
Università di Bologna, Via Zamboni 33,
40126, Bologna, Italy.}
\email{anderson.melchor@unibo.it}
\date{\today}
\keywords{Mean-field limit, Wasserstein distance, well-posedness of stochastic differential equations, white noise, propagation of chaos, stability}

\begin{document}

\begin{abstract}
We study a stochastic mean-field interacting particle system whose state space is 
$\Y = \Tt^d \times \cP(U)$, where the first component represents a spatial variable and the second one is a probability measure over a compact metric space $U$. 
The dynamics are driven by locally Lipschitz drift operators: the spatial component evolves according to a Brownian diffusion, while the measure-valued component is perturbed by a projected cylindrical noise acting in the Arens--Eells space.

We first establish existence and uniqueness of strong solutions for both the $N$-particle system and the associated nonlinear McKean--Vlasov equation under locally Lipschitz and linear growth assumptions on the drift coefficients. 
We then prove propagation of chaos: as $N\to\infty$, the empirical measure converges in expectation in Wasserstein--1 distance towards the unique McKean--Vlasov solution.

Further, we investigate exponential convergence of the nonlinear McKean--Vlasov dynamics towards a unique invariant measure. 
\end{abstract}
\maketitle
\tableofcontents

\section{Introduction}

Mean-field interacting particle systems provide a powerful framework for describing the collective behavior of large populations of agents subject to deterministic interactions and stochastic fluctuations. Such models arise naturally in kinetic theory, statistical mechanics, interacting diffusions, and in the mathematical analysis of socio-economic systems, crowd dynamics, and multi-agent systems. In particular, kinetic and mean-field approaches have provided a systematic framework for the derivation of macroscopic models from microscopic interaction rules in crowd and swarming dynamics \cite{bellomo2008}. More recently, mean-field techniques have also played a central role in the analysis of learning dynamics and neural-type systems \cite{bellomo2012,pham2018bellman,robert2016dynamics}.
From a modeling perspective, our approach is conceptually related to the active particle framework proposed by 
\cite{bellomo2011}. In that setting, the dynamics of a large population is derived from interaction rules prescribed at the level of individual agents. A statistical description of the system is then introduced, 
leading to a kinetic equation governing the evolution of the population density. Similarly, in our framework, we start from a system of interacting stochastic differential equations describing the microscopic behavior 
of each agent. The collective dynamics emerges by considering the empirical measure of the system and passing to the limit as the number of agents tends to infinity, which yields a nonlinear McKean--Vlasov equation. However, unlike the classical kinetic PDE formulation, our model involves a measure-valued internal state for each particle and evolves in an infinite-dimensional functional space.

In this work, we investigate a stochastic interacting particle system whose state space is $\Y = \Tt^d \times \cP(U)$, where the first component represents a spatial state variable evolving on $\Tt^d$, and the second component is a probability measure over a compact set $U$. The dynamics couple a spatial diffusion with a measure-valued component evolving in a tangent-like space of $\cP(U)$ and perturbed by a generalized noise in the sense of Mamporia \cite{mamporia2004,mamporia2015}. The interaction between particles is of mean-field type and depends on the empirical distribution of the full system \cite{sirignano2022mean,Chaintron_2022a,Chaintron_2022b,sznitman1991topics}.

More precisely, we consider a system of $N$ particles $(X^i(t),\lambda^i(t))_{i=1}^{N}$ solving a coupled stochastic differential system with locally Lipschitz drift fields satisfying linear growth conditions. The associated empirical measure is defined by
\begin{align}
\mu^N(t)\coloneqq \frac1N \sum_{i=1}^N \delta_{(X^i(t),\lambda^i(t))}.    
\end{align}

The corresponding nonlinear McKean--Vlasov limit equation reads
\begin{align}\label{sistema1}
\begin{cases}
\de Y(t) = H(Y(t),\Lambda(t),\mu(t))\,\de t + \sqrt{2\sigma_x}\,\de B(t), \\
\de \Lambda(t) = \R(Y(t),\Lambda(t),\mu(t))\,\de t 
+ \sqrt{2\sigma_\lambda}\,\Pi_{\Lambda(t)}\,\de W^Q(t),
\end{cases}    
\end{align}
where $\mu(t)={\rm Law}(Y(t),\Lambda(t))$. The first equation describes the spatial evolution on $\Tt^d$, driven by the velocity field $H$ and a $d$-dimensional Brownian motion. The second equation governs the evolution of the measure component $\Lambda(t)$ and involves the velocity field $\R$ together with a generalized cylindrical noise $W^Q(t)$ acting in the Arens--Eells space $\F(U)$ which is defined as the span of $\cP(U)$ with respect to the bounded Lipschitz norm \cite{Arens-Eells,AFMS,fodoroptimal}. Several applications have been proposed for systems of the form \eqref{sistema1}. One example is the follower--leader model, aimed at describing the collective behavior of a controlled population $Y(t)$ influenced by a leader population distributed according to $\Lambda(t)$ \cite{almi2023opt,Ascione-Castorina-Solombrino}. In \cite{d2025large}, this framework was further developed to analyze a neuronal system in which the random variable $Y(t)$ represents the evolution of the membrane potential of a neuron receiving inputs from other neurons in the network. We emphasize that the model \eqref{sistema1} differs from those considered in \cite{d2025large,baldi2025well,Ascione-Castorina-Solombrino} due to the presence of the generalized noise $W^Q(t)$. Roughly speaking, this object mimics the properties of a Wiener process when $Q$ is a nuclear operator; however, its definition is formulated via duality, allowing us to consider cylindrical operators \cite{Da-Prato-SDE,2022lunardi}. Unlike the previously studied models, the introduction of this term may destroy the positivity of $\Lambda(t)$. To overcome this issue, we project the stochastic perturbation onto the tangent space at $\Lambda(t)$ through a suitable operator, denoted by $\Pi_{\Lambda(t)}$. This projection mechanism, combined with structural growth conditions on $Q$ and a martingale argument, ensures that $\Lambda(t)$ remains in $\cP(U)$ almost surely \cite{revuz2013continuous}.
We stress that the functional framework underlying the measure component relies on the structure of $\F(U)$, which is the canonical predual of the space of Lipschitz functions vanishing at a reference point. This dual characterization clarifies the compatibility between the bounded-Lipschitz norm on $\F(U)$ and the Wasserstein-$1$ distance (see \eqref{disutile} below). In particular, duality with Lipschitz functions naturally leads to estimates in $\W_1$, which can in turn be controlled by the $\BL$ norm \cite{MS2020,AFMS,baldi2025well,d2025large}.

\medskip
\textbf{Contributions.}
Our analysis contributes to the mathematical theory of mean-field limits in a infinite-dimensional setting \cite{Ascione-Castorina-Solombrino,ascione2020optimal,ascione2023deterministic,d2025large}. On the one hand, the measure-valued component of each particle evolves in the Arens--Eells space, and is driven by a projected cylindrical noise which is not assumed to be trace-class as usually done for the construction of a stochastic integral \cite{2022lunardi,Da-Prato-SDE}. This goes beyond classical McKean--Vlasov dynamics with measure-dependent drifts, where the noise acts only on a finite-dimensional state and positivity is preserved by construction \cite{baldi2025well,d2025large}. In our setting, a key technical difficulty stems from the fact that the noise does not preserve positivity a priori. In contrast to classical McKean--Vlasov models, where the law of each particle is automatically a probability measure and the stochastic perturbation does not act directly on the measure-valued coordinate, here the noise may drive $\Lambda(t)$ out of $\cP(U)$ \cite{zhang2026}. A central contribution of this work is to show that, despite this, the combination of the projection onto a suitable tangent space, structural conditions on the covariance operator $Q$, and a martingale argument yields almost-sure preservation of positivity for $\Lambda(t)$ \cite{Billingsley-Convergence,revuz2013continuous}. This allows us to treat a broader class of covariance operators than in previous works, while keeping the dynamics measure-valued. On the other hand, we obtain both well-posedness and propagation of chaos under purely local Lipschitz and linear growth conditions, and we establish exponential convergence to equilibrium under a natural gradient and uniform convexity structure, without invoking curvature or displacement convexity assumptions in the sense of the metric-gradient-flow theory \cite{ambrosio2014calculus,sznitman1991topics,meleard2006asymptotic}. We summarize our three principal results below.

\begin{thm}[Informal statement of \autoref{importantprop}]
Under locally Lipschitz and linear growth assumptions on the drift fields $H$ and $\R$, together with suitable structural conditions on the operator $Q$, the nonlinear McKean--Vlasov system \eqref{sistema1} admits a unique càdlàg strong solution $(Y(t),\Lambda(t)) \in \Y$ for all $t\in[0,T]$. Moreover, the $N$-particle system admits a unique strong solution on $[0,T]$.
\end{thm}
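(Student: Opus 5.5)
Our plan follows the classical truncation-and-fixed-point route for locally Lipschitz McKean--Vlasov equations. We carry it out in the product space $\Tt^d\times\F(U)$, where $\F(U)$ is the Arens--Eells space with the $\BL$ norm. The argument has three stages.

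\textbf{Stage 1: the $N$-particle system.} We regard it as a finite-dimensional-in-$i$ SDE on $(\Tt^d\times\F(U))^N$ and proceed as follows.
\begin{enumerate}
\item For $R>0$ we introduce a cutoff $\chi_R$ and truncated drifts $H_R,\R_R$ that coincide with $H,\R$ on the ball of radius $R$ in the $\BL$ norm and are globally Lipschitz. The map $(x,\lambda)^N\mapsto\mu^N$ is Lipschitz from the product metric to $\W_1$, since $\W_1(\mu^N,\nu^N)\le \frac1N\sum_i d((x^i,\lambda^i),(y^i,\eta^i))$. Together with \eqref{disutile}, this gives that the truncated coefficients are globally Lipschitz.
\item We need the diffusion coefficient $\lambda\mapsto\Pi_\lambda$ to be Lipschitz in the Hilbert--Schmidt-type norm induced by $Q$. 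We plan to take this from the structural assumptions on $Q$ stated before the theorem. Under it, a Picard iteration in $L^2(\Omega;D([0,T]))$ works. The stochastic convolution is controlled by the Burkholder--Davis--Gundy inequality applied to the duality definition of the integral against $W^Q$, that is, by testing against Lipschitz functions.
\item This yields a unique strong solution $Z_R$ up to the exit time $\tau_R$. Linear growth, Itô's formula for $\|\lambda\|^2$ (or for a suitable Lipschitz test functional), and Gronwall give $\E\sup_{t\le T}|Z(t)|^2\le C(1+\E|Z(0)|^2)$ uniformly in $R$. Hence $\tau_R\to\infty$ a.s., and pathwise uniqueness passes to the limit.
\end{enumerate}

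\textbf{Stage 2: the nonlinear equation \eqref{sistema1}.} We fix a flow $\nu\in C([0,T];\cP_1(\Y))$ and solve the resulting linear-in-measure SDE by Stage 1 with $N=1$. We then set $\Phi(\nu)(t)={\rm Law}(Y^\nu(t),\Lambda^\nu(t))$.
\begin{enumerate}
\item Because the coefficients are only locally Lipschitz, we first restrict to the set $\mathcal{K}_M$ of flows with uniformly bounded second moments. The a priori estimate of Stage 1 shows that $\Phi$ maps $\mathcal{K}_M$ into itself for $M$ large.
\item Working on the localized event $\{\tau_R>t\}$, we estimate $\W_1(\Phi(\nu)(t),\Phi(\nu')(t))\le \E\, d(Z^\nu(t),Z^{\nu'}(t))$. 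This gives a bound of the form $C_R\int_0^t \W_1(\nu(s),\nu'(s))\,\de s$ plus a remainder $\P(\tau_R\le t)$, which vanishes uniformly on $\mathcal{K}_M$ as $R\to\infty$.
\item A weighted norm $\sup_t e^{-\beta t}\W_1$, or iteration of the map, gives a contraction. Banach's theorem then yields the unique fixed point, and hence a unique strong solution.
\end{enumerate}
Strictly, step 2 gives only the approximate Lipschitz bound, and closing the contraction requires some care. We intend to close it by the standard argument: compare two fixed points directly and let $R\to\infty$ after applying Gronwall.

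\textbf{Stage 3 (main obstacle): $\Lambda(t)\in\cP(U)$.} We expect the main difficulty to be showing that $\Lambda(t)$ stays in $\cP(U)$, since otherwise the coefficients, defined on $\Y$, cease to make sense.
\begin{enumerate}
\item We first extend the coefficients to $\F(U)$, for example through a retraction onto $\cP(U)$ combined with the cutoff, and solve the extended equation there.
\item \emph{Total mass.} Testing with $f\equiv1$, the projection $\Pi_\lambda$ onto the tangent space annihilates the noise. Assuming $\R$ takes values in the zero-mass tangent directions, $\langle 1,\Lambda(t)\rangle$ is constant, hence equal to $1$.
\item \emph{Positivity.} For each nonnegative Lipschitz $f$, the process $\langle f,\Lambda(t)\rangle$ is a semimartingale. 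Its martingale part has quadratic variation $\int_0^t\|Q^{1/2}\Pi_{\Lambda}^* f\|^2\de s$. The structural conditions on $Q$ should force this to vanish, together with the inward drift, wherever $\langle f,\Lambda\rangle$ reaches $0$.
\item We then use a time change or the Dambis--Dubins--Schwarz representation \cite{revuz2013continuous} to show that $\langle f,\Lambda\rangle$ cannot become negative.
\item Finally, a countable dense family of such $f$, combined with the Riesz representation theorem, gives $\Lambda(t)\ge0$ for all $t$ a.s.
\end{enumerate}
If step 3 fails as stated, our fallback is a Lyapunov function of the form $-\log\langle f,\Lambda\rangle$, controlled via Itô's formula.
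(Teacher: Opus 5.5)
\textbf{Stage 2 (the McKean--Vlasov part) has a genuine gap.} Your localization on $\{\tau_R>t\}$ localizes the particle, not the measure argument.

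Assumptions \autoref{A1}, \autoref{A2} and \autoref{A6} give Lipschitz bounds only for $\Psi\in\cP(B_r^{\Y})$. The flows in $\mathcal K_M$, and in particular any solution, are in general not concentrated on any ball, because the Brownian component spreads the law of $Y(t)$ over all of $\Tt^d$. So the constant $C_R$ in front of $\int_0^t\W_1(\nu(s),\nu'(s))\,\de s$ is not provided by the hypotheses.

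Even if you read ``locally Lipschitz'' as ``Lipschitz on moment-bounded sets of measures'', your closing argument does not work. When you compare two fixed points $\mu,\mu'$ under synchronous coupling, the term $\W_1(\mu(s),\mu'(s))\le\E\|Z(s)-Z'(s)\|_{\cS}$ involves the unstopped processes. The remainder $\varepsilon_R$, of order $\P(\tau_R\le T)^{1/2}$, therefore sits inside the Gr\"onwall integral, and you only get $\sup_{t\le T}\W_1(\mu(t),\mu'(t))\lesssim\varepsilon_R\,(1+C_RTe^{2C_RT})$. Nothing relates the growth of $C_R$ to the decay of $\varepsilon_R$; with second moments only, $\varepsilon_R$ decays like $R^{-1}$ in general. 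So letting $R\to\infty$ yields nothing. Moreover, comparing two fixed points would give at most uniqueness, not existence. The pathwise localization of Stage 1 works only because $\mu^N$ is a function of the particles, so two solutions coincide exactly up to $\tau_R$; there is no analogue of this for a law.

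In fact, under \autoref{A1}--\autoref{A8} as written, uniqueness for \eqref{sistema1} can fail. Take $d=1$, $\R\equiv0$, $Q=0$, and set $H(y,\Psi)=0$ if $\Psi$ is concentrated on some ball, $H(y,\Psi)=\mathrm{sgn}(m(\Psi))|m(\Psi)|^{1/2}$ otherwise, with $m(\Psi)=\int x\,\Psi(\de x,\de\lambda)$. All assumptions hold. For $(Y(0),\Lambda(0))=(0,\lambda_0)$, each $a(t)\in\{0,\,t^2/4,\,-t^2/4\}$ gives a solution $Y(t)=a(t)+\sqrt{2\sigma_x}B(t)$, $\Lambda(t)=\lambda_0$: indeed $\mathrm{Law}(Y(t))$ is Gaussian with mean $a(t)$, and $\dot a=\mathrm{sgn}(a)|a|^{1/2}$.

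So Stage 2 needs an extra hypothesis, for instance global Lipschitz continuity in $(y,\Psi)$, or a $\W_1$-Lipschitz bound in $\Psi$ on all of $\cP_1(\Y)$ combined with a one-sided Lipschitz condition in $y$. The paper does not fill this either. Its proof of \autoref{importantprop} covers only the $N$-particle system, via a Picard iteration that uses the moment bound \eqref{daprov} as if it confined the iterates to $B_{r_T}^{\Y}$; your stopping-time localization is the right way to do that part. The McKean--Vlasov solution is simply assumed to exist in the proof of \autoref{lem:conv_muN_detailed}.

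\textbf{On the noise (Stages 1.2 and 3).} Take \autoref{A8} at $\lambda=0$, or, if you restrict to $\cP(U)$, at $\lambda=\delta_x$ with $\phi(x)=0$. It gives $\langle Q\phi,\phi\rangle\le0$ for all nonnegative Lipschitz $\phi$ (respectively, all those vanishing somewhere), which by Cauchy--Schwarz and linearity forces $Q=0$. Under the stated hypotheses your Stage 3 is therefore trivially true; the DDS/time-change route is itself a legitimate substitute for the paper's Tanaka/local-time argument. Conversely, no state-independent $Q\neq0$ can make the quadratic variation of $\langle\Lambda,f\rangle$ vanish wherever $\langle\Lambda,f\rangle=0$.

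If you want a nonzero $Q$, Stage 1.2 also fails as written. BDG ``by testing against Lipschitz functions'' only bounds $\E\sup_t|\langle M(t),f\rangle|^2$ for each fixed $f$. The $\BL$ norm is a supremum over $f$ that cannot be moved outside the expectation, and $\F(U)$ is not of type $2$ (for $U=[0,1]$ it is isomorphic to $L^1(0,1)$). Hence a norm bound like \eqref{itobound1}, which the paper also uses without proof, fails in general. A possible repair uses the rank-one structure $(\Pi_\lambda-\Pi_{\lambda'})h=-\langle h,\mathbf 1\rangle(\lambda-\lambda')$. With $\beta=\langle W^Q,\mathbf 1\rangle$ one has $\Pi_\lambda\,\de W^Q=\de W^Q-\lambda\,\de\beta$. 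For $E_t=\exp\big(\sqrt{2\sigma_\lambda}\,\beta_t+\sigma_\lambda\langle Q\mathbf 1,\mathbf 1\rangle t\big)$, It\^o's formula gives $\de(E_t\lambda_t)=E_t\big(\R+2\sigma_\lambda Q\mathbf 1\big)\de t+\sqrt{2\sigma_\lambda}\,E_t\,\de W^Q_t$. The multiplicative noise disappears, and you are left with a random ODE with additive forcing that can be handled pathwise.
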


This result provides the foundation for the analysis of propagation of chaos.

\begin{thm}[Informal statement of \autoref{lem:conv_muN_detailed}]
Let $\mu^N(t)$ denote the empirical measure of the particle system and let $\mu(t)$ be the unique solution of the McKean--Vlasov equation. Then, as $N\to\infty$,
\begin{align}
\sup_{t\in[0,T]} \mathbb{E}\big[ \W_1(\mu^N(t),\mu(t)) \big]
\longrightarrow 0.
\end{align}
In particular, the particle system is chaotic with limit law $\mu(t)$.
\end{thm}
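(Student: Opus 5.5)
Following Sznitman's classical coupling, I would construct $N$ independent copies $(\bar Y^i,\bar\Lambda^i)$ of the McKean--Vlasov solution. Each copy is driven by the same Brownian motions $B^i$ and cylindrical noises $W^{Q,i}$ as the $i$-th particle and starts from the same initial datum. Its existence and uniqueness is given by the well-posedness result (the informal \autoref{importantprop}). Set $\bar\mu^N(t)=\frac1N\sum_i\delta_{(\bar Y^i(t),\bar\Lambda^i(t))}$ and split by the triangle inequality:
\begin{align*}
\E\,\W_1(\mu^N(t),\mu(t))\le \E\,\W_1(\mu^N(t),\bar\mu^N(t))+\E\,\W_1(\bar\mu^N(t),\mu(t)).
\end{align*}
For the first term I would use $\W_1(\mu^N,\bar\mu^N)\le \frac1N\sum_i d_\Y((X^i,\lambda^i),(\bar Y^i,\bar\Lambda^i))$, with $d_\Y$ combining the torus distance and the $\BL$/$\W_1$ distance on $\cP(U)$ (compatible via \eqref{disutile}). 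The second term is an empirical-measure law-of-large-numbers term for i.i.d. samples of $\mu(t)$ on the compact space $\Y$; $\cP(U)$ is compact for $\W_1$ since $U$ is compact. It tends to $0$ uniformly in $t$. To get this I would invoke the Varadarajan theorem together with the boundedness of $d_\Y$ (dominated convergence), and for uniformity in $t$ use equicontinuity of $t\mapsto\mu(t)$ in $\W_1$ plus a finite time grid. Alternatively, covering numbers of the compact $\Y$ give an explicit rate through Dudley-type bounds.

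For the coupling term, I would subtract the equations. The noise terms in the spatial component cancel exactly. In the measure component the difference $\Pi_{\lambda^i}\de W^{Q,i}-\Pi_{\bar\Lambda^i}\de W^{Q,i}$ remains; this is a martingale whose quadratic variation is controlled through the structural assumptions on $Q$ and Lipschitz dependence of $\Lambda\mapsto\Pi_\Lambda$. Because the drifts are only locally Lipschitz, I would localize: introduce stopping times $\tau_R$ at which some particle or copy leaves a ball of radius $R$ in the relevant norm. Before $\tau_R$, the Lipschitz constant $L_R$ applies, giving
\begin{align*}
|H(x,\lambda,\mu^N)-H(\bar y,\bar\lambda,\mu)|\le L_R\big(d_\Y + \W_1(\mu^N,\bar\mu^N)+\W_1(\bar\mu^N,\mu)\big),
\end{align*}
and similarly for $\R$. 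Averaging over $i$, using exchangeability, and applying BDG (or Itô on a smoothed square of the $\BL$ norm) to the martingale part, Gronwall yields
\begin{align*}
\E\sup_{s\le t\wedge\tau_R}\frac1N\sum_i d_\Y^2\le C_R e^{C_RT}\sup_{s\le T}\E\,\W_1^2(\bar\mu^N(s),\mu(s))\to0.
\end{align*}
Outside $\{\tau_R<T\}$ I would use boundedness of $d_\Y$ on the compact state space together with $\P(\tau_R<T)\le C/R$. The latter comes from linear growth and moment bounds uniform in $N$. Letting first $N\to\infty$ and then $R\to\infty$ concludes the argument.

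The main obstacle is the stochastic integral in the measure component. The noise is cylindrical rather than trace class and lives in the Arens--Eells space $\F(U)$, which is not Hilbert, so a plain Itô isometry is not available. I would first try to work in the Hilbert structure in which $W^Q$ is defined via duality and control the norm of the difference by testing against a countable family of Lipschitz functions dense for the $\BL$ norm. The bound would then rest on the assumed structural growth of $Q$ composed with $\Pi_\Lambda$ being Lipschitz in $\Lambda$ in Hilbert--Schmidt sense; the same assumption that gives well-posedness should suffice here. If only local Lipschitz bounds are available, the localization handles it.
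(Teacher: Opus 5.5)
Your overall architecture is the same as the paper's: synchronous coupling with i.i.d.\ McKean--Vlasov copies, the split through $\bar\mu^N$, Gr\"onwall for the coupling error, and a law of large numbers for the i.i.d.\ term. But the localization does not work, and the root cause is that $\Y$ is not compact. Its first factor is $\mathbb{R}^d$, not a torus (cf.\ the linear growth in $\norma{x}_2$ in \autoref{A3} and \autoref{A5}, the convexity in $x$ in \autoref{A10}, and the remark that $\Y$ is only locally compact).

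The places where you use boundedness of $d_\Y$ (dominated convergence for the i.i.d.\ term, the estimate on $\{\tau_R<T\}$) can be repaired with moment bounds. A time grid also needs the $L^1$-modulus $\E\, d_\Y\big((\bar Y(t),\bar\Lambda(t)),(\bar Y(s),\bar\Lambda(s))\big)$ of the paths, not just $\W_1$-continuity of $t\mapsto\mu(t)$. The paper instead uses empirical measures of i.i.d.\ paths in $C([0,T];\cS)$, which gives uniformity in $t$ directly.

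What cannot be repaired is $\P(\tau_R<T)\le C/R$ uniformly in $N$. Suppose $\tau_R$ is the first exit of any of the $2N$ processes from $B_R^{\Y}$. The copies are i.i.d.\ with non-degenerate Brownian noise in $\mathbb{R}^d$, and each exits before $T$ with some probability $p_R>0$. Hence $\P(\tau_R\ge T)\le(1-p_R)^N\to0$, and ``first $N\to\infty$, then $R\to\infty$'' collapses. If instead $\tau_R$ refers to the averaged norm $\norma{\cdot}_{\cS^N}$, the tail bound holds, but single particles are no longer in $B_R^{\Y}$, so \autoref{A1} and \autoref{A6} do not apply.

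Moreover, \autoref{A2} and \autoref{A6} control the measure argument only for $\Psi^1,\Psi^2\in\cP(B_R^{\Y})$. But $\mu(t)$ has unbounded support in $x$ for every $t>0$. So the term $L_R\,\W_1(\bar\mu^N,\mu)$ in your display is not licensed by any localization in $\omega$. Per-particle stopping times would additionally produce tail terms multiplied by $e^{C_RT}$, so they require a growth condition on $C_{H,R}$, $L_{\R,R}$.

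The paper does not localize at all. It asserts that the moment bound \eqref{usedopo2} gives $\mu(t)\in\cP_1(B^{\Y}_{r'})$, which does not follow, and then uses fixed constants $C_{H,r}$, $L_{\R,r}$. In effect it treats the Lipschitz bounds as global. Under that reading, the drift part of your Gr\"onwall argument closes with no stopping times at all.

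The second gap is the martingale term $M_t=\int_0^t(\Pi_{\lambda^i(s)}-\Pi_{\bar\Lambda^i(s)})\,\de W^{Q,i}(s)$, where your proposal stops short of an estimate. The Hilbertian structure attached to $Q$ controls each scalar projection $\nm{M_t,\phi}$, but you need $\E\norma{M_t}_{\BL}^2$. Testing against a countable $\BL$-dense family $\{\phi_k\}$ only yields $\sup_k\E|\nm{M_t,\phi_k}|^2$, not $\E\sup_k|\nm{M_t,\phi_k}|^2$. There is also no Hilbert--Schmidt hypothesis among \autoref{A1}--\autoref{A8}. The paper simply reuses the It\^o-type bound \eqref{itobound1} from the Picard iteration.

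Observe that $(\Pi_{\lambda}-\Pi_{\lambda'})h=-\nm{h,1}(\lambda-\lambda')$ has rank one. So $M_t=-\int_0^t(\lambda^i(s)-\bar\Lambda^i(s))\,\de\beta^i(s)$, where $\beta^i=\nm{W^{Q,i},1}$ is a real Brownian motion of variance rate $\nm{Q1,1}\le c_Q$; nothing about $Q$ beyond \eqref{hyp1} matters. What is needed is a second-moment bound for $\F(U)$-valued integrals against a scalar Brownian motion, and that is the genuine difficulty. As an inequality for all adapted integrands (test it on deterministic step functions), it would give $\F(U)$ Gaussian type $2$. That fails already for $U=[0,1]$, where $\F(U)$ is isomorphic to $L^1(0,1)$, unless $\nm{Q1,1}=0$, in which case $M\equiv0$.

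To close this step you would have to exploit the specific structure of $\lambda^i-\bar\Lambda^i$. Alternatively, you could measure the $\lambda$-component in a metric that admits such an estimate and is compatible with the drift hypotheses. Neither your sketch nor a bare appeal to \eqref{itobound1} provides this.
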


The previous result holds on finite time intervals. However, under stronger structural assumptions on the velocity fields $H$ and $\R$, it is possible to extend the analysis to the long-time regime.

\begin{thm}[Informal statement of \autoref{thm:exponentialconvergence}]
Under additional assumptions imposing a gradient structure and uniform convexity of the drift fields, the nonlinear McKean--Vlasov dynamics admits a unique invariant measure $\mu_\infty$. Moreover, the flow converges exponentially fast:
\begin{align}
\W_1(\mu(t),\mu_\infty)
\le e^{-\kappa t}\W_1(\mu(0),\mu_\infty),
\end{align}
where $\mu(0)$ denotes the law of the initial condition of \eqref{sistema1} and $\kappa>0$.
\end{thm}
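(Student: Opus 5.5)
The natural route is a synchronous coupling argument in $\W_1$ on $\Y=\Tt^d\times\cP(U)$, combined with the uniqueness theory from \autoref{importantprop}. The invariant measure is first built as a fixed point of the nonlinear flow and exponential contraction is derived afterwards; equivalently, one shows that the map $\mu(0)\mapsto\mu(t)$ is a strict contraction in $\W_1$ and applies Banach's theorem. Take two initial laws $\mu_1(0),\mu_2(0)$. Realize the two McKean--Vlasov solutions $(Y_1,\Lambda_1)$ and $(Y_2,\Lambda_2)$ on the same probability space, driven by the same Brownian motion $B$ and the same cylindrical noise $W^Q$. Choose the initial data coupled optimally for $\W_1$, with the cost $d_\Y((x,\lambda),(y,\eta))=|x-y|+\norma{\lambda-\eta}_{\BL}$, which is compatible with $\W_1$ on $\cP(U)$ by \eqref{disutile}.

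Under synchronous coupling the Brownian parts cancel exactly in $Y_1-Y_2$. For the measure component, the difference of the noise terms is $\sqrt{2\sigma_\lambda}(\Pi_{\Lambda_1}-\Pi_{\Lambda_2})\de W^Q$. I would work with a squared (Hilbertian) quantity $\norma{\Lambda_1-\Lambda_2}^2$ in the Arens--Eells structure and apply It\^o's formula. This yields a drift term, an Itô correction of order $\sigma_\lambda\,\mathrm{Tr}\big((\Pi_{\Lambda_1}-\Pi_{\Lambda_2})Q(\Pi_{\Lambda_1}-\Pi_{\Lambda_2})^*\big)$, and a martingale. The structural assumptions on $Q$ should bound this correction by $C_Q\sigma_\lambda\norma{\Lambda_1-\Lambda_2}^2$, since $\Pi_\lambda$ is Lipschitz in $\lambda$.

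The drift contribution is controlled by the gradient and uniform convexity hypotheses. Write $(H,\R)=-\nabla\Phi$, where $\Phi$ is $\alpha$-uniformly convex in the state variables. The interaction part is Lipschitz in the measure argument with constant $L$, measured in $\W_1$ and hence bounded by $\E[d_\Y]$ under the coupling. Monotonicity then gives
\begin{align}
\frac{\de}{\de t}\E\big[|Y_1-Y_2|^2+\norma{\Lambda_1-\Lambda_2}^2\big]\le -2(\alpha-L-C_Q\sigma_\lambda)\,\E\big[|Y_1-Y_2|^2+\norma{\Lambda_1-\Lambda_2}^2\big],
\end{align}
with $\kappa=\alpha-L-C_Q\sigma_\lambda>0$ required by the assumption. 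Passing from the squared distance back to $\W_1$ is the delicate point. In the spatial coordinate there is no noise difference, so one can differentiate $|Y_1-Y_2|$ pathwise and get a linear rather than squared estimate. For the measure coordinate, I would either use a smooth approximation $\sqrt{\varepsilon+\norma{\cdot}^2}$ and let $\varepsilon\to0$, or state the result directly for $\W_1\le\W_2$-type bounds. Grönwall then gives $\W_1(\mu_1(t),\mu_2(t))\le\E[d_\Y]\le e^{-\kappa t}\W_1(\mu_1(0),\mu_2(0))$.

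Existence of $\mu_\infty$ comes next. The space $\cP(\Y)$ with $\W_1$ is complete, since $\Y$ is a complete separable metric space with $\cP(U)$ compact. For $t_0$ large the map $S_{t_0}$ is a contraction, so it has a unique fixed point. The semigroup property, which follows from uniqueness in \autoref{importantprop}, shows that this fixed point is invariant for every $S_t$. Alternatively, $\mu(t)$ is Cauchy, using the contraction between $\mu(t)$ and $\mu(t+s)=S_t\mu(s)$ together with bounded moments. Uniqueness of $\mu_\infty$ and the stated estimate then follow by applying the contraction with $\mu_2(0)=\mu_\infty$.

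The main obstacle is the noise term in the $\Lambda$-equation. The projected cylindrical noise is not trace class, so I need to check that $\Pi_{\Lambda_1}-\Pi_{\Lambda_2}$ composed with $Q^{1/2}$ is Hilbert--Schmidt, with a norm controlled by $\norma{\Lambda_1-\Lambda_2}$; this is exactly where the structural conditions on $Q$ enter. If that bound holds only with a constant, the contraction rate absorbs it as above. If it fails, I would instead assume $\sigma_\lambda$ small relative to the convexity constant.
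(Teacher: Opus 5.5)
Your argument follows the paper's proof. You use a synchronous coupling, a Gr\"onwall inequality for the second moment $\E[\,|Y_1-Y_2|^2+\norma{\Lambda_1-\Lambda_2}_{\BL}^2]$, and then a Cauchy/semigroup (or fixed-point) argument for $\mu_\infty$. What this yields is $\W_2(\mu_1(t),\mu_2(t))\le e^{-\kappa t}\W_2(\mu_1(0),\mu_2(0))$, which is exactly what the paper establishes: its formal \autoref{thm:exponentialconvergence} is stated in $\W_2$. The gap is your final step, $\W_1(\mu_1(t),\mu_2(t))\le\E[d_\Y]\le e^{-\kappa t}\W_1(\mu_1(0),\mu_2(0))$, which does not follow from a second-moment estimate. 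With the $\W_1$-optimal initial coupling you only get $\E[d_\Y(t)]\le\sqrt{2}\,e^{-\kappa t}\big(\E[\,|\Delta Y_0|^2+\norma{\Delta\Lambda_0}_{\BL}^2]\big)^{1/2}$. The factor $\sqrt{2}$ comes from comparing the $\ell^1$-type cost $d_\Y$ with the $\ell^2$-type quantity you differentiate. The right-hand side is the square root of a second moment of the initial coupling, which cannot be bounded by $\W_1(\mu_1(0),\mu_2(0))$, since Jensen's inequality goes the other way. $\W_2$ is not bounded by any multiple of $\W_1$, already for two-point laws. Hence your fallback ``$\W_1\le\W_2$'' proves only $\W_1(\mu(t),\mu_\infty)\le e^{-\kappa t}\W_2(\mu(0),\mu_\infty)$, and the contraction/fixed-point step must be run in $(\cP_2(\Y),\W_2)$, not in $(\cP_1(\Y),\W_1)$.

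The regularization $\sqrt{\varepsilon+\norma{\cdot}^2}$ cannot repair this, and the same problem affects your It\^o step. $(\F(U),\norma{\cdot}_{\BL})$ is not a Hilbert space (for $U=[0,1]$ it is isomorphic to $L^1$), so $\norma{\cdot}_{\BL}^2$ is not $C^2$ and no It\^o formula is available for it. Moreover, the pairing in \autoref{A11} is a duality pairing, not the inner product such a formula would produce; the paper's formula for $\frac{\de}{\de t}\Phi$ is formal in the same way. Switching to some other Hilbert norm on $\F(U)$ would sever the link with $\W_1$ given by \eqref{disutile}.

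On the other hand, what you call the main obstacle is not one. Since $(\Pi_{\Lambda_1}-\Pi_{\Lambda_2})h=-\nm{h,1}(\Lambda_1-\Lambda_2)$, the noise difference is $-\sqrt{2\sigma_\lambda}\,\Delta\Lambda\,\de\beta$, where $\beta\coloneqq\nm{W^Q,1}$ is a real Brownian motion of variance rate $\nm{Q1,1}\le c_Q$; the noise difference is therefore rank one and radial. This is the idea that could give a genuine $\W_1$ statement. With the martingale $\mathcal{E}_t\coloneqq\exp(-\sqrt{2\sigma_\lambda}\beta_t-\sigma_\lambda\nm{Q1,1}t)$ one checks that $\Delta\Lambda_t=\mathcal{E}_t\Gamma_t$, where $\Gamma_t=\Delta\Lambda_0+\int_0^t\mathcal{E}_s^{-1}\Delta\R(s)\,\de s$. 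So $\E\norma{\Delta\Lambda_t}_{\BL}$ can be estimated pathwise through one-sided derivatives of the norm, with no It\^o formula and no $\sigma_\lambda C_Q$ loss in the rate. Closing a first-moment Gr\"onwall in $d_\Y$ still requires dissipativity of $(H,\R)$ measured in $|\cdot|$ and $\norma{\cdot}_{\BL}$ themselves. That includes control of how $H$ depends on $\lambda$ and how $\R$ depends on $x$, which neither your inner-product convexity nor \autoref{A10}--\autoref{A11} supplies.
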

As a consequence, we obtain uniform-in-time stability estimates for the mean-field limit, which transfer to the particle system.

\medskip

The paper is organized as follows. In Section~\ref{sec:preli}, we introduce the notation, the functional setting, and recall the notion of generalized stochastic integration \cite{mamporia2004}. In Section~\ref{section:mainresults}, we state our main results \autoref{importantprop}--\autoref{thm:exponentialconvergence}. Finally, in Section~\ref{sec:proofs}, we provide the detailed proofs.

\section{Preliminaries}\label{sec:preli}
In what follows, we introduce some further preliminaries needed to fully define the ambient spaces where our stochastic equations taking place. Let us first recall some useful spaces regarding probability measures \cite{AFMS,panaretos2020,Optimal-Transport-Santambrogio}.
\subsection{Probability measures}
Let $(X,d_{X})$ be a metric space, we denote by $\M(X)$ the space of signed Borel measures with finite total variation, by $\M_{+}(X)$ and $\cP(X)$ the convex subsets of nonnegative measures and probability measures, respectively. For $\sigma\in \M(X)$, $\vert \sigma\vert\in \M_{+}(X)$ denotes the total variation measure of $\sigma$. We shall also use the notation $\M_{0}(X)$ for the subset of measures with zero mean. Given a metric space $(X,d_{X})$, we consider the Lipschitz space 
\begin{align*}
{\rm Lip}(X,d_{X}):=\left\{\phi: X\rightarrow \R\vert \exists\hskip 0,1cm L>0: \forall x,y\in X\hskip 0,1cm \vert \phi(x)-\phi(y)\vert \leq L d_{X}(x,y)\right\}.
\end{align*}
For a continuous function $\phi\in C(X)$ we denote by
\begin{align*}
{\rm Lip}(\phi):=\sup_{\stackrel{x,y\in Y}{x\neq y}}\frac{\vert \phi(x)-\phi(y) \vert}{d_{X}(x,y)}   \end{align*}
its Lipschitz constant. In a complete and separable metric space $(X,d_{X})$, we shall use the Kantorovich-Rubinstein distance $\W_{1}(\cdot,\cdot)$ in the class $\cP(X)$. For $\mu_{1},\mu_{2}\in \M_{1}(X)$, the $1$-Wasserstein distance $\W_{1}(\mu_{1},\mu_{2})$ is defined by 

\begin{align*}
\W_{1}(\mu_{1},\mu_{2}):=\inf\left\{\left. \int_{X\times X}d_{X}(x_{1},x_{2}) \gamma(dx_{1},dx_{2})\right\vert 
 \gamma\in \Gamma(\mu_{1},\mu_{2})\right\}   
\end{align*}
where $\Gamma(\mu_{1},\mu_{2})$ is the set of admissible coupling between $\mu_{1}$ and $\mu_{2}$. It is worth to recall that due to Kantorovich duality, one can also consider the following definition 

\begin{align*}
\W_{1}(\mu_{1},\mu_{2}):=\sup\left\{ \int_{X}\phi{\rm d}(\mu_{1}-\mu_{2})\Big| \hskip 0,1cm \phi \in {\rm Lip}(X,d_{X}), {\rm Lip}(\phi)\leq 1 \right\}.  
\end{align*}

Notice that $\W_{1}(\mu_{1},\nu_{1})$ is finite if $\mu_{1},\nu_{1}$ belong to the space

\begin{align}\label{spaceP1}
\cP_{1}(X):=\left\{ \mu\in \cP(X)\Big| \int_{X}d_{X}(x,\overline{x}){\rm d}\mu(x)<\infty \hskip 0,2cm \text{for some $\overline{x}\in X$}\right\}.
\end{align}

Note that $(\cP_{1}(X),\W_{1})$  is complete if $(X,d_{X})$ is complete. Moreover, by \cite[Theorem 2.2.1]{panaretos2020} the following holds true: a sequence $(\mu_{n})\subset \cP_{1}(X)$ converges to $\mu\in \cP_{1}(X)$ with respect to the Wasserstein distance $\W_{1}$ if and only if,
for all $\phi\in {\rm Lip}(X,d_{X})$,

\begin{align*}
&\int_{X}\phi {\rm d}\mu_{n} \overset{n\rightarrow \infty}{\longrightarrow}  \int_{X} \phi{\rm d}\mu, \qquad  \int_{X}{\rm d}_{X}(\cdot,\bar{x}){\rm d}\mu_{n}\overset{n\rightarrow \infty}{\longrightarrow}  \int_{X}d_{X}(\cdot,\bar{x}){\rm d}\mu.  
\end{align*}
Let $T>0$. In what follows, we denote by $C([0,T],X)$ the space of continuous curves $\y:[0,T]\rightarrow X$ endowed with the metric 

\begin{align}
\de_{T,X}(\y_{1},\y_{2})\coloneqq\sup_{t\in [0,T]}\de_{X}(\y_{1}(t),\y_{2}(t)), \hskip 0,2cm \text{for all $\y_{1},\y_{2}\in C([0,T],X)$.}
\end{align}

\subsection{Our setting}\label{subsec2}
In what follows, we are interested in studying SDEs on $\Y$, where $\Y\coloneqq \Tt^{d}\times \cP(U)$. The state space of our system is given by pairs $(x,\lambda)=y\in\Y$. Here $x\in\Tt^{d}$ denotes the spatial component of an agent, whereas the element $\lambda\in \cP(U)$ denotes a probability distribution over the space $U$, which we assume to be a compact metric space. It can be interpreted as a space of strategies \cite{AFMS,MS2020}. 
Let $(U,d_{U})$ be a compact metric space, and consider the Arens--Ells space $\F(U)$ defined as

 \begin{align}\label{Arens-Ells}
 \F(U)\coloneqq\overline{{\rm span}(\cP(U))}^{{\norma{\cdot}}_{{\rm BL}}}   
 \end{align}
as the closure in the dual space $({\rm Lip}(U,d_{U}))^{\ast}$ with respect to the dual norm

\begin{align*}
{\norma{\ell}}_{{\rm BL}}:=\sup\left\{\nm{\ell,\phi}:\phi\in {\rm Lip}(U, d_{U}), \norma{\phi}_{{\rm Lip}}\leq 1\right\},
\end{align*}
where 

\begin{align}\label{normLip}
\norma{\phi}_{{\rm Lip}}\coloneqq \norma{\phi}_{\infty}+{\rm Lip}(\phi)    
\end{align}
and ${\rm Lip}(\phi)$ is the Lipschitz constant of $\phi$. Introduced in \cite{Arens-Eells}, the space $\F(U)$ is a separable Banach space containing $\M(U)$. Furthermore,  for a measure $\nu\in \M_{0}$, the $\norma{\cdot}_{{\rm BL}}$-norm is equivalent to the norm induced by the dual formulation of the $1$-Wasserstein distance. Furthermore,  we have by Kantorovich duality that

\begin{align}\label{disutile}
   \norma{\mu_{1}-\mu_{2}}_{{\rm BL}}\leq \W_{1}(\mu_{1},\mu_{2})\leq (1+ D_{U})\norma{\mu_{1}-\mu_{2}}_{{\rm BL}},
\end{align}
where 

\begin{align}\label{costantDu}
D_{U}\coloneqq \min_{x_{0}\in U}\max_{x_{1}\in U}\de_{U}(x_{0},x_{1})\le {\rm diam}(U), 
\end{align}
and ${\rm diam}(U)$ denotes the diameter of $U$. In the next we need to consider the space $\cS\coloneqq\Tt^{d}\times \F(U)$ endowed with the norm $\norma{y}_{\cS}=\norma{(x,\sigma)}_{\cS}:=\vert x\vert + \norma{\sigma}_{{\rm BL}}$, which is a separable Banach space. For a given $r>0$, we denote by $B_{r}$ the closed ball of radius $r$ in $\Tt^{d}$ and by $B_{r}^{\Y}$ the ball of radius $r$ in $\Y$, that is, $B_{r}^{\Y}\coloneqq \{y\in\Y:\norma{y}_{\Y}\leq R\}$. We point out that since $U$ is assumed compact, by \cite[Corollary 2.2.5]{panaretos2020} $\cP(U)$ is compact and thus $\Y$ is a locally compact space. Therefore $B_{r}^{\Y}$ is a compact set. Since $\Y\subset \cS$, for any $\Psi\in \cP(\Y)$ we may define
\begin{align*}
m_{p}(\Psi)\coloneqq \int_{\Y}\norma{y}_{\cS}^{p}\de \Psi,
\end{align*}
for $1\leq p<+\infty$. In particular, for $p=1$, one gets 
\begin{align*}
\cP_{1}(\Y)=\{\Psi\in \cP(\Y): m_{1}(\Psi)<+\infty\},
\end{align*}
where $\cP_{1}(\Y)$ as introduced in \eqref{spaceP1}.  

\subsection{The stochastic integral}
In this part, we suppose that $X$ is a real separable Banach space, $X^{\ast}$ its dual. A random variable $\xi:\Omega\rightarrow X$ is called a Gaussian random variable if $\nm{x^{\ast},\xi}: \Omega\rightarrow \Tt$ is a real-valued Gaussian random variable for all $x^{\ast}\in X^{\ast}$. Then, its distribution  is uniquely determined by its mean $\E(\xi)$ and its covariance operator $Q:X^{\ast}\rightarrow X$, $\nm{Q x^{\ast},y^{\ast}}\coloneqq\E(\nm{\xi-\E(\xi),x^{\ast}}\nm{\xi-\E(\xi),y^{\ast}})$ which is a symmetric and positive linear operator. Let us recall that by \autoref{characterization:gaussians}, symmetric and positive linear operators which are covariance operators of Gaussian measures, are then referred to as Gaussian covariances.

\begin{defn}\label{whitenoise}
A family of random elements $(W(t))_{t\in[0,T]}$, $W(t):\Omega\rightarrow X$, is called a Wiener process if
\begin{itemize}
\item $W(0)=0$ almost surely (a.s.);
\item $W(t_{i+1})-W(t_{i}) (i=0,1,\ldots, n-1)$ are independent random elements for every $0\leq t_{0}<t_{1}<\cdots t_{n}\leq T$;
\item for every $t\in [0,T]$, $W(t)$ is a Gaussian random element with mean zero and covariance operator $tQ$, where $Q:X^{\ast}\rightarrow X$ is a fixed Gaussian covariance.
\end{itemize}
\end{defn}
In what follows, we assume that
\begin{align}\label{hyp1}
c_{Q}\coloneqq \sup_{\norma{x^{\ast}}\leq 1}\nm{Q x^{\ast},x^{\ast}}<\infty.
\end{align}
This condition is related to the well-posedness of the definition of stochastic integral in Banach spaces. In the next, our formulation of the stochastic dynamics follows the measure-valued approach introduced in \cite{mamporia2004}. Let us first recall some preliminary concepts.
\begin{defn}[{\cite[Definition 1]{mamporia2004}}]
Let $(\Omega,\B,\mathbb{P})$ be a probability space endowed of a complete filtration $(\F_{t})_{t\in [0,T]}$. A function $\phi:[0,T]\times \Omega \rightarrow X^{\ast}$ is called non-anticipating with respect to $(\F_{t})_{t\in [0,T]}$ if the function $(t,\Omega)\rightarrow \nm{\phi(t,\omega),x}$ from $([0,T]\times \Omega, \B([0,T])\times \B)$ into $(\Tt,\B(\Tt))$ is measurable for all $x\in X$, and the function $\omega\rightarrow \nm{\phi(t,\omega),x}$ from $(\Omega,\B)$ into $(\Tt,\B(\Tt))$ is $\F_{t}$-measurable for all $t\in [0,T]$ and $x\in X$.
\end{defn}
The concept of a non-anticipating function with respect to a complete filtration $(\F_{t})_{t\in [0,T]}$ can be generalized to functions of the form $\phi:[0,T]\times \Omega\rightarrow L(X,X)$, where $L(X,X)$ denotes the space of linear bounded operators from $X$ to $X$. In the next, we set $\H_{Q}(L(X,X))$ the class of all non-anticipating functions $\phi:[0,T]\times \Omega\rightarrow L(X,X)$ such that

\begin{align}\label{nonanti}
\int_{0}^{T}\int_{\Omega}\nm{\phi(t,\omega)Q \phi^{\ast}(t,\omega)x^{\ast},x^{\ast}}\de t\de \P <\infty \hskip 0,2cm \text{for all $x^{\ast}\in X^{\ast}$.}
\end{align}
Here we use $\phi^{\ast}$ to denote the dual map of $\phi$. We now recall what a stochastic integral of a non-anticipating function is.

\begin{defn} \label{defsto}
Given $\phi\in \H_{Q}(L(X,X))$, and $t\in [0,T]$ we consider $T_{\phi}^{t}: X^{\ast}\rightarrow L^{2}(\Omega,\B,\cP)$, $T_{\phi}^{t}x^{\ast}\coloneqq \int_{0}^{t}\nm{\phi^{\ast}(x^{\ast}),\de W(s)}$. We call  $T_{\phi}^{t}$ a generalized stochastic integral of the operator-valued random function $\phi$ with respect to $(W(s))_{s\in [0,t]}$ if $T_{\phi}^{t}$ defines a bounded linear operator. Furthermore, we say that $\xi:\Omega \rightarrow X$ is the stochastic integral of $\phi$ (if such an element exists) if $\nm{\xi,x^{\ast}}=T_{\phi}^{t}x^{\ast}$ for all $x\in X^{\ast}$.
\end{defn}
Notice that Definition \ref{defsto} is meaninful when $\sup_{\norma{x^{\ast}}\leq 1}\norma{T_{\phi}^{t}x^{\ast}}^{2}<\infty$. In fact, we have by an abstract It\"{o} isometry that for each $\phi\in \H_{Q}(L(X,X))$, and $t\in [0,T]$

\begin{align*}
\sup_{\norma{x^{\ast}}\leq 1}\norma{T_{\phi}^{t}x^{\ast}}^{2}&=\sup_{\norma{x^{\ast}}\leq 1}\int_{\Omega}\left(\int_{0}^{t}\phi^{\ast}x^{\ast}\de W(s)\right)^{2}\de \P\\
&=\sup_{\norma{x^{\ast}}\leq 1}\int_{\Omega}\int_{0}^{t}\nm{Q\phi^{\ast}x^{\ast},\phi^{\ast}x^{\ast}}\de s \de\P\\
&=\sup_{\norma{x^{\ast}}\leq 1}\int_{\Omega}\int_{0}^{t}\nm{\phi Q\phi^{\ast}x^{\ast},x^{\ast}}\de s \de\P.
\end{align*}
Because $\phi \in \H_{Q}(L(X,X))$ we then conclude that the last quantity is finite, and we are done. As pointed out in \cite{mamporia2004}, for all $\phi \in \H_{Q}(L(X,X))$ the generalized stochastic integral $T_{\phi}^{t}x^{\ast}$ exists, but the stochastic integral $\xi$ does not always exist. In what follows, if $\xi$ exists, we write $\xi=\int_{0}^{t}\phi \de W_{t}^{Q}$. 
\section{Stochastic dynamics with projected $Q$–noise}
In this part, we introduce the model that we aim to analyze. We need the following space:    

\begin{defn}
We define
\begin{align}
T(U)\coloneqq \{h\in \F(U):\nm{h,1}=0\}. 
\end{align}
\end{defn}
In particular, notice that $T(U)$ is a vectorial space, and thus with the induced norm of $\F(U)$, it has the structure of a Banach space. Furthermore, we define its projector as follows:

\begin{defn}
Let $\Pi_{\lambda}:\F(U)\to T(U)$ be the bounded linear projection
defined by
\begin{align}
\Pi_{\lambda}(h)\coloneqq  h - \nm{h,1}\lambda,    
\end{align}
\end{defn}
In this definition, we substract the component parallel to $\lambda$ to enforce zero total mass:
\begin{align}
\nm{\Pi_\lambda(h),1} = 0.
\end{align}
In what follows, we consider the system of SDEs

\begin{equation}\label{eq:stochastic-system-Q}
\begin{cases}
&\de X^{i}(t)= H(X^i(t),\lambda^i(t),\mu^{N}(t))\,\de t + \sqrt{2\sigma_x}\,\de B^i(t),\\
&\de\lambda^{i}(t)= \R(X^i(t),\lambda^i(t),\mu^{N}(t))\,\de t+
\sqrt{2\sigma_\lambda}\,\Pi_{\lambda^i(t)}\,\de W^{Q,i}(t),
\end{cases}
\end{equation}
where $i=1,\ldots, N$, $B^{i}$ is a standard Brownian motion in $\mathbb{R}^d$, $W^{Q,i}$ is the generalized noise on $\F(U)$, $\Pi_{\lambda^i(t)}$ ensures that the stochastic increment of $\lambda^i(t)$ lies in $T_{\lambda^{i}(t)}(U)$, $\sigma_x,\sigma_\lambda>0$ are the diffusion coefficients, and $\mu^{N}(t)$ is the empirical measure defined as
      \begin{align}\label{empiricalmeasure}
       \mu^{N}(t)\coloneqq \frac{1}{N}\sum_{i=1}^{N}\delta_{(X^{i}(t),\lambda^{i}(t))}.   
      \end{align}
Here, we suppose that $\{B^{i}, W^{Q,i}\}_{i=1}^{N}$ are independent random variables. An important fact is that the second equation is intended in the following sense:

\begin{defn}
Suppose that $(\Omega,\B,\P)$ is a complete probability space endowed with the $\sigma$-algebra generated by $(B^{i}(t)\otimes W^{Q,i}(t))_{t\in[0,T],i=1\ldots,N}$ and that is still denoted as $(\F_{t})_{t\in[0,T]}$. We say that an $\F(U)$-valued predictable process $\lambda^{i}(t)$, $t\in[0,T]$, is a weak solution to 

\begin{align}
&\de\lambda^{i}(t)= \R(X^i(t),\lambda^i(t),\mu^{N}(t))\,\de t+
\sqrt{2\sigma_\lambda}\,\Pi_{\lambda^i(t)}\,\de W^{Q,i}(t)   
\end{align}
if  for all $x^{\ast}\in (\F(U))^{\ast}$

\begin{align}\label{integ1}
\nm{\lambda^{i}(t),x^{\ast}}=\nm{\lambda^{i}(0),x^{\ast}}+ \int_{0}^{t}\nm{\R(X^i(s),\lambda^i(s),\mu^{N}(s)),x^{\ast}}\de s+ \sqrt{2\sigma}T_{\I_{i}}^{t}x^{\ast}
\end{align}
where $\I_{i}\in \H_{Q}(L(\F(U),\F(U)))$ denotes the projector operator $\Pi_{\lambda^{i}}$. Moreover, we say that $(\lambda^{i}(t))_{t\in[0,T]}$ is a strong solution if 
\begin{align}\label{integ2}
\lambda^{i}(t)=\lambda^{i}(0)+ \int_{0}^{t}\R(X^i(s),\lambda^i(s),\mu^{N}(s))\de s+ \sqrt{2\sigma}\int_{0}^{t}\Pi_{\lambda^{i}(s)}\de s \hskip 0,4cm \text{$\P$-a.s..}
\end{align}
\end{defn}
\begin{rem}
Notice that thanks to \eqref{hyp1} one has that for each $t\in [0,T]$, and any $i=1,\ldots, N,$
\begin{align*}
\sup_{\norma{x^{\ast}}\leq 1}\norma{T_{\I_{i}}^{t}x^{\ast}}^{2}&\leq \sup_{\norma{x^{\ast}}\leq 1}\int_{\Omega}\int_{0}^{T}\nm{Q x^{\ast},x^{\ast}}\de t \de\P\\
&=T\sup_{\norma{x^{\ast}}\leq 1}\nm{Q x^{\ast},x^{\ast}}<\infty.
\end{align*}
\end{rem}
To give sense to the dynamics of $\lambda^{i}(t)$ in the form of \eqref{integ2}, we aim to restrict ourselves to operators satisfying the following definition.

\begin{defn}
 We say that a positive symmetric linear operator $A:(\F(U))^{\ast}\rightarrow \F(U)$ belongs to $\mathscr{R}_{2}(\F(U))$ if there exists a positive constant $c$ such that for all $n\in\N$, and $x_{1}^{\ast},\ldots, x_{n}^{\ast}$ belonging to $(\F(U))^{\ast}$

 \begin{align}\label{condition_dual}
 \left(\sum_{i=1}^{n}\norma{A x_{i}^{\ast}}_{{\rm BL}}^{2}\right)^{\frac{1}{2}}\leq  c\sup_{\nm {Ax^{\ast},x^{\ast}}\leq 1}\left(\sum_{i=1}^{n}\left\vert \nm{A x_{i}^{\ast},x^{\ast}} \right\vert^{2}\right)^{\frac{1}{2}}.   
 \end{align}
\end{defn}
We have the following:
\begin{thm}[{\cite[Theorem 1]{mamporia2004}}]\label{thm:mamporia}
Let $\phi\in \H_{Q}(L((\F(U),\F(U))))$, and $L_{\phi}: (\F(U))^{\ast}\rightarrow \F(U)$, $x^{\ast}\mapsto \int_{0}^{t}\int_{\Omega}\phi(s,w)Q\phi^{\ast}(s,\omega)x^{\ast}\de s\P(\omega)$ belongs to $\mathscr{R}_{2}(\F(U))$. Then there exists the stochastic integral $\int_{0}^{t}\phi \de W_{s}^{Q}$, and
\begin{align}
  \E\norma{\int_{0}^{t}\phi \de W_{s}^{Q}}_{{\rm BL}}^{2}<+\infty.  
\end{align}
Furthermore, the process $\int_{0}^{t}\phi \de W_{s}^{Q}$ has a.s. continuous sample paths.
\end{thm}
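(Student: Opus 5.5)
We follow the strategy of \cite{mamporia2004}: first build the integral as a \emph{covariance object} (a Gaussian-type random element, in distribution), then use the $\mathscr{R}_{2}$ condition to place it in $\F(U)$, and finally pass from ``in distribution'' to an actual random variable with $\nm{\xi,x^{\ast}}=T_{\phi}^{t}x^{\ast}$ for all $x^{\ast}$.

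\textbf{Step 1 (weak integral).} For $\phi\in\H_{Q}(L(\F(U),\F(U)))$ and each $x^{\ast}\in(\F(U))^{\ast}$, the scalar It\^o integral $T_{\phi}^{t}x^{\ast}=\int_0^t\nm{\phi^{\ast}x^{\ast},\de W(s)}$ is well defined in $L^{2}(\Omega)$. The It\^o isometry computed after Definition~\ref{defsto} gives
\begin{align*}
\E\big(T_{\phi}^{t}x^{\ast}\,T_{\phi}^{t}y^{\ast}\big)=\nm{L_{\phi}x^{\ast},y^{\ast}}.
\end{align*}
Hence $T_{\phi}^{t}:(\F(U))^{\ast}\to L^{2}(\Omega)$ is linear and bounded, and its ``covariance'' $(T_{\phi}^{t})^{\ast}T_{\phi}^{t}$ coincides with $L_{\phi}$.

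\textbf{Step 2 (approximation).} Since $\F(U)$ is separable, fix a sequence $(x_{k}^{\ast})$ that is norming on $\F(U)$. Let $(e_{k})$ be an orthonormal system in $L^{2}(\Omega)$ spanning the closed range of $T_{\phi}^{t}$, obtained by Gram--Schmidt from $T_{\phi}^{t}x_{k}^{\ast}$. Then, formally,
\begin{align*}
\xi=\sum_{k}\big(T_{\phi}^{t}\big)^{\ast}e_{k}\;e_{k},
\qquad
\big(T_{\phi}^{t}\big)^{\ast}e_{k}=\E\big(e_{k}\,\cdot\big)\in \F(U)^{\ast\ast}.
\end{align*}
One must first check that $(T_{\phi}^{t})^{\ast}e_{k}$ actually lies in $\F(U)$. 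This holds because $L_{\phi}$ maps into $\F(U)$ and $(T^{t}_{\phi})^{\ast}$ maps the range of $T^{t}_{\phi}$ onto the range of $L_{\phi}$, with closure taken in the BL norm.

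\textbf{Step 3 (convergence, main obstacle).} The key point is that the $\mathscr{R}_{2}$ inequality \eqref{condition_dual} for $L_{\phi}$ is precisely what forces the partial sums $\xi_{n}=\sum_{k\le n}(T_{\phi}^{t})^{\ast}e_{k}\,e_{k}$ to be Cauchy in $L^{2}(\Omega;\F(U))$. The plan is as follows:
\begin{itemize}
\item write $(T_{\phi}^{t})^{\ast}e_{k}$ as limits of $L_{\phi}y_{k}^{\ast}$ with the $y_{k}^{\ast}$ orthonormalised, i.e.\ $\nm{L_{\phi}y_{j}^{\ast},y_{k}^{\ast}}=\delta_{jk}$;
\item note that, for such a system, the right-hand side of \eqref{condition_dual} is bounded by a Bessel-type sum $\big(\sum_{k=n}^{m}|\nm{L_{\phi}y_k^\ast,x^\ast}|^{2}\big)^{1/2}\le 1$ over $\nm{L_{\phi}x^{\ast},x^{\ast}}\le1$, and that this tail tends to zero;
\item conclude that $\sum_{k}\norma{L_{\phi}y_{k}^{\ast}}_{\BL}^{2}$ has vanishing tails.
\end{itemize}
Combining this with orthonormality of the $e_{k}$ in $L^{2}(\Omega)$ (a Gaussian or martingale type-2 style estimate in the Banach space), we obtain $\E\norma{\xi_{m}-\xi_{n}}_{\BL}^{2}\to0$. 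This passage from an $\ell^{2}$ estimate of the coefficients to an $L^{2}$ estimate of the random sum is the delicate step. I would try it first using the fact that the uniform tail bound controls $\E\norma{\cdot}^{2}$ via an Itô--Nisio argument.

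\textbf{Step 4 (identification and continuity).} The limit $\xi$ satisfies $\nm{\xi,x^{\ast}}=T_{\phi}^{t}x^{\ast}$ a.s.\ for each $x^{\ast}$, by $L^{2}$ convergence on the dense set and boundedness of $T_{\phi}^{t}$, and $\E\norma{\xi}_{\BL}^{2}<\infty$. For path continuity, apply the same construction to $t\mapsto\int_0^t\phi\,\de W^{Q}$ to obtain an $\F(U)$-valued square-integrable martingale whose finite-dimensional projections $\xi_{n}(t)$ are continuous, being finite sums of continuous scalar Itô integrals. Doob's maximal inequality gives $\E\sup_{t\le T}\norma{\xi_{n}(t)-\xi_{m}(t)}_{\BL}^{2}\le 4\E\norma{\xi_{n}(T)-\xi_{m}(T)}_{\BL}^{2}\to0$, applied to the real submartingale $\norma{\xi_{n}(t)-\xi_{m}(t)}_{\BL}$. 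So the convergence is uniform in time along a subsequence a.s., and the limit has continuous paths.
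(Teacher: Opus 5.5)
The paper gives no argument of its own for this statement; it only transfers Mamporia's Theorem~1 from $[0,1]$ to $[0,T]$. Your outline therefore has to stand on its own, and it breaks at Step~3.

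\textbf{The gap in Step~3.} Take an $L_\phi$-orthonormal block $y_n^*,\dots,y_m^*$. The right-hand side of \eqref{condition_dual} does not tend to zero: the supremum of the Bessel sum over $\nm{L_\phi x^*,x^*}\le 1$ equals exactly $1$ for every nonempty block (take $x^*=y_n^*$). Used in that form, \eqref{condition_dual} gives only $\sum_k\norma{L_\phi y_k^*}_{\BL}^2\le c^2$. Applied to the random vector $\sum_{k=n}^m e_k L_\phi y_k^*$, it gives at best $\sup_{n,m}\E\norma{\xi_m-\xi_n}_{\BL}^2\le c^2$. That is boundedness, not the Cauchy property.

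The tools you propose for the upgrade are not available here:
\begin{itemize}
\item $\F(U)$ is in general not of type $2$; for $U=[0,1]$ it is isomorphic to $L^1(0,1)$.
\item In $L^1$ the information you extract, $\sum_k\norma{x_k}^2<\infty$, does not even make $\sum_k\gamma_kx_k$ converge for i.i.d.\ standard Gaussians $\gamma_k$. Take $x_k=k^{-1}f_k$ with disjointly supported $f_k$ of norm one; then $\norma{\sum_{k=n}^m\gamma_kx_k}=\sum_{k=n}^m|\gamma_k|/k$, which does not tend to zero.
\item It\^o--Nisio needs independent symmetric summands. Your $e_k$ are Gram--Schmidt combinations of stochastic integrals with random non-anticipating integrands (in the application, $\phi=\Pi_{\lambda(s)}$), so they are merely orthonormal.
\end{itemize}

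\textbf{The missing idea.} Read \eqref{condition_dual} as saying that the canonical embedding $j$ of the reproducing kernel Hilbert space $H_\phi$ of $L_\phi$ into $\F(U)$ is absolutely $2$-summing with $\pi_2(j)\le c$. Here $H_\phi$ is the completion of $L_\phi((\F(U))^*)$ for $(L_\phi x^*,L_\phi y^*)_{H_\phi}\coloneqq\nm{L_\phi x^*,y^*}$. Then use Pietsch's domination theorem instead of the $\sup$-form. It gives a probability $\nu$ on the weakly compact unit ball $B$ of $H_\phi$ such that $\norma{jh}_{\BL}^2\le c^2\int_B|(h,\eta)_{H_\phi}|^2\,\nu(\de\eta)$.

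Take $h=\sum_{k=n}^m e_k(\omega)h_k$ with $h_k\coloneqq L_\phi y_k^*$, so that $jh=\xi_m-\xi_{n-1}$. Orthonormality of the $e_k$ in $L^2(\Omega)$ is now needed only at the scalar level, inside the $\nu$-integral, via Fubini:
\[
\E\norma{\xi_m-\xi_{n-1}}_{\BL}^2\le c^2\int_B\sum_{k=n}^m|(h_k,\eta)_{H_\phi}|^2\,\nu(\de\eta)\longrightarrow 0 .
\]
The limit follows by dominated convergence, since Bessel gives $\sum_k|(h_k,\eta)_{H_\phi}|^2\le 1$ on $B$. With this, your Steps~1, 2 and~4 go through; Doob's inequality applied to $\xi_n(t)=\sum_{k\le n}L_\phi y_k^*\,T_\phi^t y_k^*$ is fine.

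\textbf{A smaller correction in Step~4.} A norming sequence is not norm-dense in $(\F(U))^*$ (for $U=[0,1]$ this dual is $\simeq L^\infty$). So ``density plus boundedness of $T_\phi^t$'' does not identify $\nm{\xi,x^*}$ for every $x^*$. Instead, use your own Step~2 observation that $(T_\phi^t)^*$ maps $L^2(\Omega)$ into $\F(U)$. This makes $T_\phi^t$ weak$^*$-to-weak continuous, so $\{T_\phi^t x_k^*\}$ spans a dense subspace of $\overline{{\rm ran}\,T_\phi^t}$. Then $\nm{\xi,x^*}$, being the $L^2$-projection of $T_\phi^t x^*$ onto that closed span, equals $T_\phi^t x^*$.
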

\begin{proof}
The proof of this result can be found in \cite[Theorem 1]{mamporia2004} for general Banach spaces with interval time $[0,1]$. The same proof holds true for every interval $[0,T]$, with $0<T<+\infty$.    
\end{proof}
\begin{rem}[On the dual structure of $\F(U)$]
Let $x_0\in U$ be fixed and denote by $\mathrm{Lip}_0(U)$ the space of real-valued Lipschitz functions on $U$ vanishing at $x_0$, namely
\begin{align}
\mathrm{Lip}_0(U)\coloneqq \{ f\in \mathrm{Lip}(U): f(x_0)=0\},    
\end{align}
endowed with the Lipschitz norm. It is well known that $\F(U)$ is the canonical predual of $\mathrm{Lip}_0(U)$. More precisely, one has the isometric identification
\begin{align}
(\F(U))^{*} \simeq \mathrm{Lip}_0(U).    
\end{align}
We refer for instance to \cite[Theorem 2.11]{fodoroptimal} for a precise statement. In the present work, since we do not fix explicitly a base point $x_0$, we keep the notation $(\F(U))^{*}$ for the dual space, with the understanding that it can be identified with a suitable Lipschitz space. This dual characterization justifies the formulation of the cylindrical noise through an operator $Q:(\F(U))^{*}\to\F(U)$ and explains the compatibility between the $\BL$ norm and the Wasserstein-$1$ distance used throughout the paper. 
\end{rem}
\begin{ex}
As pointed out in \cite[Example 2.9]{fodoroptimal}, by restricting ourselves to $U=\{0,1\}$, we have that $\F(U)\cong \mathbb{R}$, and thus $T(U)=\{0\}$. Therefore, equation \eqref{eq:stochastic-system-Q} is reduced to a particular case of the model studied in \cite{d2025large}. 
\end{ex}
Before of giving a nontrivial example, let us first recall the notion of propagation of chaos, and introduce our setting. Our interest relies about the propagation of chaos of the system of SDEs \eqref{eq:stochastic-system-Q}. To this aim we consider the following:
\begin{defn}[Propagation of chaos, \cite{sznitman1991topics}]\label{defn:propchaos}
A sequence of random vectors $\by^N=(Y^1,\dots,Y^N)$ in $\Y^N$ is said to be \emph{chaotic with law $\mu\in\cP(\Y)$} if for every $k\in\mathbb N$ fixed, the $k$-marginals converge in law:
\begin{align}
\mathcal L(Y^{1},\dots,Y^{k}) \xrightarrow[N\to\infty]{\text{law}} \mu^{\otimes k}.    
\end{align}
\end{defn}
To analyze the asymptotic behavior of \eqref{eq:stochastic-system-Q} we consider the following assumptions.
\subsection{Assumptions}\label{sub:assumptions}
We assume that given $\Psi\in \cP(\Y)$ the velocity field $H(\cdot,\cdot,\Psi):\Y\rightarrow \Tt^{d}$ satisfies the following structural assumptions:
\begin{ass}\label{A1}
For every $r>0$, for every $\Psi\in \cP(B_{r}^{\Y})$, $H(\cdot,\cdot,\Psi)\in {\rm Lip}(B_{r}^{\Y};\Tt^{d})$ uniformly with respect to $\Psi$, that is, there exists a positive constant $C_{H,r}$ such that
\begin{align*}
\norma{H(y^{1},\Psi)-H(y^{2},\Psi)}_{2}\leq C_{H,r}\norma{y^{1}-y^{2}}_{\cS};
\end{align*}
\end{ass}
\begin{ass}\label{A2}
For every $r>0$, for every $\Psi\in\cP(B_{r}^{\Y})$, there exists a positive constant $C_{H,r}$ such that for every $y\in B_{r}^{\Y}$ and for every $\Psi^{1},\Psi^{2}\in \cP(B_{r}^{\Y})$
\begin{align*}
\norma{H(y,\Psi^{1})-H(y,\Psi^{2})}_{2} \leq C_{H,R}\W_{1}(\Psi^{1},\Psi^{2});
\end{align*}
\end{ass}
\begin{ass}\label{A3}
 There exists $M_{H}>0$ such that for every $y\in\Y$ and for every $\Psi\in\cP_{1}(\Y)$ there holds 
\begin{align*}
\norma{H(y,\Psi)}_{2}\leq M_{H}\left(1+\norma{y}_{\cS}+ m_{1}(\Psi)\right). 
\end{align*} 
\end{ass}
In what follows,  we state the assumptions on the operator $\R$. 
\begin{ass}\label{A4}
 For $\Psi\in \cP_{1}(\Y)$, we suppose that the operator $\R(\cdot,\Psi):\Y\rightarrow \F(U)$ satisfies that for every $(y,\Psi)\in \Y\times \cP_{1}(\Y)$ constants belong to the kernel of $\R(y,\Psi)$, that is
\begin{align*}
\nm{\R(y,\Psi),1}_{\F(U),{\rm Lip}(U)}=0.
\end{align*}   
\end{ass}
\begin{ass}\label{A5}
For every $(y,\Psi)\in \Y\times \cP_{1}(\Y)$, there exists a positive constant $M_{\R}$ such that
\begin{align*}
\norma{\R(y,\Psi)}_{{\rm BL}}\leq M_{\R}(1+\norma{x}_{2}+m_{1}(\Psi)), \hskip0,1cm\text{where $y=(x,\lambda)$.}
\end{align*}    
\end{ass}
\begin{ass}\label{A6}
For every $r>0$ there exists  a positive constant $L_{\R,r}$ such that for every $(y^{1},\Psi^{1}),(y^{2},\Psi^{2})\in B_{r}^{\Y}\times \cP(B_{r}^{\Y})$,
\begin{align*}
\norma{\R(y^{1},\Psi^{1})-\R(y^{2},\Psi^{2})}_{{\rm BL}}\leq L_{\R,r}\left(\norma{y^{1}-y^{2}}_{\cS}+ \W_{1}(\Psi^{1},\Psi^{2})\right);
\end{align*}    
\end{ass}
\begin{ass}\label{A7}
For every $r>0$ there exists $\delta_{r}>0$ such that for every $(y,\Psi)\in B_{r}^{\Y}\times \cP_{1}(\Y)$ we have
\begin{align*}
\R(y,\Psi)+\delta_{r}\lambda\geq 0,  \hskip0,1cm\text{where $y=(x,\lambda)$}.
\end{align*} 
\end{ass}
\begin{ass}\label{A8}
We suppose that $Q:(\F(U))^{\ast}\rightarrow \F(U)$ is a symmetric and positive operator such that \eqref{hyp1} holds true, and for each $\lambda\in \F(U)$, and each non-negative function $\phi\in C_{b}(U)$

\begin{align}
 \nm{Q\Pi^{\ast}_{\lambda}\phi,\Pi^{\ast}_{\lambda}\phi}\leq (\nm{\lambda,\phi})^{+}.   
\end{align}
where $^{+}$ denotes the positive part.
\end{ass}
Let us comment the previous assumptions.
\begin{rem}
Assumptions \autoref{A1}-\autoref{A2} and \autoref{A6} reflect the local Lipschitz property of the fields $H$ and $\R$. These hypotheses, combined with the linear growth conditions \autoref{A3} and \autoref{A5}, ensure that the dynamics remain confined to a ball $B_{r'}^{\Y^{N}}$ of some positive radius $r'$, which depends on the time horizon $T>0$ and the constants appearing in our assumptions. Assumption \autoref{A4} expresses that perturbations of a probability measure occur through its tangent space. This condition, together with \autoref{A7}-\autoref{A8}, guarantees the preservation of positivity of $\lambda(t)$ when perturbed by the generalized noise $W^{Q}(t)$ and by the field $\R$. We also note that the noise $W^{Q}(t)$ does not require $Q$ to be compact; in fact, our setting allows $W^{Q}(t)$ to be cylindrical.
\end{rem}
Assumptions \autoref{A1}--\autoref{A8} ensure well-posedness of the particle system and of the McKean--Vlasov equation on any finite time interval $[0,T]$, together with propagation of chaos on finite horizons. In order to extend these results to infinite time horizons, we impose the gradient structure and uniform convexity assumptions \autoref{A9}--\autoref{A12}. By restricting ourselves to these hypotheses, we guarantee global dissipativity of the dynamics and exponential stability of the McKean--Vlasov flow.

\begin{ass}\label{A9}
We assume that both drift terms are of gradient type:
\begin{align}
H(x,\lambda,\mu) &= - \nabla_x F(x,\lambda,\mu), \\
\R(x,\lambda,\mu) &= - D_\lambda G(x,\lambda,\mu),
\end{align}
for suitable potentials $F$ and $G$.    
\end{ass}
\begin{ass}[Uniform convexity in $x$]\label{A10}
There exists $m_x > 0$ such that for all $x,x' \in \Tt^d$, $\lambda \in \cP(U)$, and $\mu \in \cP_2(\Y)$,
\begin{align}
\big\langle 
\nabla_x F(x,\lambda,\mu) - \nabla_x F(x',\lambda,\mu),
\, x-x'
\big\rangle
\ge
m_x \|x-x'\|_{2}^2.
\end{align}
\end{ass}
\begin{ass}[Uniform convexity in $\lambda$]\label{A11}
There exists $m_\lambda > 0$ such that for all $\lambda,\lambda' \in \cP(U)$, $x \in \Tt^d$, and $\mu \in \cP_{1}(\Y)$,
\begin{align}
\big\langle 
D_\lambda G(x,\lambda,\mu) - D_\lambda G(x,\lambda',\mu),
\, \lambda-\lambda'
\big\rangle
\ge
m_\lambda \|\lambda-\lambda'\|_{\BL}^2,
\end{align}    
where we have denoted with $D_{\lambda}$ the Fréchet derivative of $G$ along $\lambda$ (see \autoref{sec:frechet}). 
\end{ass}

\begin{ass}\label{A12}
There exists $L_\mu > 0$ such that for all $(x,\lambda)\in \Y$ and $\mu,\nu \in \cP_{1}(\Y)$,
\begin{align}
|\nabla_x F(x,\lambda,\mu) - \nabla_x F(x,\lambda,\nu)|
&\le
L_\mu \W_{1}(\mu,\nu),\\
\|D_\lambda G(x,\lambda,\mu) - D_\lambda G(x,\lambda,\nu)\|
&\le
L_\mu \W_{1}(\mu,\nu).
\end{align}
\end{ass}
\begin{rem}
We emphasize that $D_{\lambda} G$ denotes the Fréchet derivative of $G$ with respect to $\lambda$ in the Banach space $(\F(U),\|\cdot\|_{\BL})$. 
In particular, differentiability is understood in the $\BL$ topology rather than in the Wasserstein sense introduced in \cite{AGS}.

Although the Lipschitz assumptions \autoref{A2}, \autoref{A6}, and \autoref{A12} are formulated in terms of the Wasserstein distance $\W_1$, this is consistent with our functional framework. 
Indeed, on a compact set $U$, the Wasserstein--1 distance is controlled by the bounded--Lipschitz norm as stated in \eqref{disutile} up to a multiplicative constant depending only on the diameter of $U$. 
Therefore, Lipschitz continuity with respect to $\W_1$ is compatible with differentiability in the $\BL$ norm.

This choice is natural in our setting, since the measure-valued component evolves in the Arens--Eells space and the projected cylindrical noise is defined in the dual of $\F(U)$. 
For this reason, the Fréchet derivative in $\BL$ provides the appropriate notion of gradient for the analysis developed in this work.
\end{rem}

\begin{ex}
Let us consider a rank-one operator $Q: \F(U) \to \F(U)$ as:
\begin{align}\label{eq:Q_rank_one}
Q(u)\coloneqq \varepsilon\nm{ u, u_0} \, u_0,
\end{align}
where $\kappa\in (0, 1)$ is a fixed parameter and $u_0 \in \F(U)$ is a fixed element. Notice that $Q$ is bounded, linear, and semi definite-positive. Let us set $U = [0,1]$, and consider $\lambda \in \F(U)$, and $\phi \in C_b(U)$ be a non-negative, and continuous bounded function. In what follows, suppose that
\begin{align}\label{controlled}
\left|\left\langle \phi, u_0 - \langle u_0, 1 \rangle \lambda \right\rangle\right| \leq \epsilon \sqrt{(\langle \lambda, \phi \rangle)^+}.
\end{align}
where $\epsilon > 0$ is a fixed constant. By assuming that $\kappa \epsilon^2< 1$, then
\begin{align*}
\nm{Q(\Pi_\lambda^* \phi), \Pi_\lambda^{\ast} \phi}
&= \kappa\epsilon^{2} \nm{ \phi, u_0 - \langle u_0, 1 \rangle \lambda}^2 \\
&<\nm{\lambda, \phi}^{+},
\end{align*}
and \autoref{A8} holds true. In the context of machine learning, operators of the form as in \eqref{eq:Q_rank_one} are particularly valuable since naturally emerge in low-rank approximations of high-dimensional covariance matrices and in the analysis of attention  mechanisms in neural networks~\cite{Berlinet2004}. The controlled coupling condition \eqref{controlled} ensures that perturbations remain sub-linear in the mean of test functions $\phi$, providing automatic regularization of the bilinear form. This property has direct applications in robust statistical estimation, where noise structure must be carefully controlled to prevent destabilization of estimators under model misspecification \cite{bartolucci2024neural}. The constraint $\kappa\epsilon^2 < 1$ plays a critical role in stabilizing the noise interactions. This bound guarantees that the self-reinforcing feedback through the direction $u_0$ does not amplify deviations unboundedly~\cite{kolesnikov2026general,li2026optimal}.
\end{ex}

The following result will be used to prove that $\lambda(t)$ is a probability measure $a.s..$
\begin{thm}[{\cite[Thm. 1.2, Ch. VI]{revuz2013continuous}}]\label{thm:Tanaka}
Let $(Z_t)_{t\ge0}$ be a real-valued continuous semimartingale. For any $a\in\Tt$, there exists a stochastic process $L_t^a(Z)$ called the local time of $Z$ in $a$ such that
\begin{align}
(Z_t-a)^-
=(Z_0-a)^- -\displaystyle\int_0^t \mathbf 1_{\{Z_s<a\}}\,\de Z_s
+\frac{1}{2}\,L_t^a(Z),
\qquad t\ge0,
\end{align}
where $x^-=\max\{-x,0\}$ and $L_t^a(Z)$ denotes the local time of $Z$ at level $a$.
\end{thm}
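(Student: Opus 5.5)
The statement is the classical Tanaka formula, and I would prove it by approximating the convex function $f(x)\coloneqq (x-a)^-$ by smooth convex functions, applying It\^o's formula to each approximation, and passing to the limit. The local time $L^a_t(Z)$ is then \emph{defined} as (twice) the limit of the second-order It\^o terms. That limit exists because every other term in the formula converges.

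\textbf{Step 1 (choice of derivative).} The function $f$ is convex and Lipschitz with constant $1$. Its right derivative is $f'_+(x)=-\mathbf 1_{\{x<a\}}$, since $f'_+(a)=0$. This is exactly the integrand appearing in the statement, so the approximation must produce $f'_+$ in the limit rather than $f'_-$.

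\textbf{Step 2 (smoothing).} Fix a mollifier $\rho_n\ge 0$ of unit mass with support in $[0,1/n]$, and set $f_n(x)\coloneqq\int f(x+y)\rho_n(y)\,\de y$. Then:
\begin{itemize}
\item each $f_n$ is $C^\infty$ and convex, so $f_n''\ge 0$;
\item $f_n\to f$ uniformly, with $|f_n-f|\le 1/n$;
\item $f_n'(x)=\int f'(x+y)\rho_n(y)\,\de y$, so $|f_n'|\le 1$;
\item $f_n'(x)\to f'_+(x)$ pointwise for every $x$, because the shifts $y$ lie to the right of $0$.
\end{itemize}
It\^o's formula for the continuous semimartingale $Z$ gives
\begin{align*}
f_n(Z_t)=f_n(Z_0)+\int_0^t f_n'(Z_s)\,\de Z_s+A^n_t,\qquad A^n_t\coloneqq\frac12\int_0^t f_n''(Z_s)\,\de\langle Z\rangle_s .
\end{align*}
Each $A^n$ is continuous, adapted and nondecreasing.

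\textbf{Step 3 (passage to the limit).} Write $Z=Z_0+M+V$, with $M$ a continuous local martingale and $V$ of finite variation, and localize with stopping times $\tau_k$ so that $M$ is bounded in $L^2$ and $V$ has bounded total variation.
\begin{itemize}
\item For the finite-variation part, pathwise dominated convergence gives $\int_0^t f_n'(Z_s)\,\de V_s\to\int_0^t f'_+(Z_s)\,\de V_s$ uniformly on compacts.
\item For the martingale part, the It\^o isometry together with Doob's inequality gives
\begin{align*}
\E\Big[\sup_{t\le \tau_k}\Big|\int_0^t (f_n'-f'_+)(Z_s)\,\de M_s\Big|^2\Big]\le 4\,\E\int_0^{\tau_k}|f_n'-f'_+|^2(Z_s)\,\de\langle M\rangle_s\to0,
\end{align*}
by dominated convergence, since the integrand is bounded by $4$ and tends to $0$ pointwise.
\end{itemize}
Hence $\int f_n'(Z)\,\de Z\to -\int\mathbf 1_{\{Z<a\}}\,\de Z$ uniformly on compacts in probability. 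Since $f_n(Z)\to f(Z)$ uniformly, it follows that $A^n$ converges, uniformly on compacts in probability, to
\begin{align*}
A_t\coloneqq (Z_t-a)^- -(Z_0-a)^- +\int_0^t\mathbf 1_{\{Z_s<a\}}\,\de Z_s .
\end{align*}
This limit is continuous and adapted, and it is nondecreasing as a limit of nondecreasing processes. Setting $L^a_t(Z)\coloneqq 2A_t$ yields the identity in the statement.

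\textbf{Step 4 (optional).} If one also wants to identify $L^a$ as a genuine local time, one checks that it increases only on $\{Z_s=a\}$. Away from a neighbourhood of $a$, the functions $f_n$ are eventually affine there, so $f_n''$ vanishes and so do the increments of $A^n$.

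I expect the main obstacle to be Step 3, namely the justification of stochastic dominated convergence. This requires careful localization and control of the pointwise convergence exactly at the level $a$, which is where the choice of right-shifted mollifiers in Step 2 is essential.
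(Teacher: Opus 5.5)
Your proof is correct. The paper gives no argument for this statement and only cites Revuz--Yor. The proof there (Theorem VI.1.1, specialised to $f(x)=(x-a)^-$) follows the same scheme as yours: smooth the convex function, apply It\^o's formula, pass to the limit in the stochastic integrals by dominated convergence, and obtain the increasing process as the limit of the nondecreasing second-order terms.

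The genuine difference is the side of the mollifier. Revuz--Yor use kernels supported in $(-\infty,0]$, so $f_n'\to f'_-$, and their formula for the negative part actually reads $(Z_t-a)^-=(Z_0-a)^--\int_0^t\mathbf 1_{\{Z_s\le a\}}\,\de Z_s+\frac12L^a_t$. Your kernels supported in $[0,1/n]$ give $f_n'\to f'_+=-\mathbf 1_{(-\infty,a)}$ everywhere, including at $x=a$. This yields exactly the integrand $\mathbf 1_{\{Z_s<a\}}$ of the statement as written.

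The difference is not cosmetic. Write $Z=Z_0+M+V$. The two integrals differ by $\int_0^t\mathbf 1_{\{Z_s=a\}}\,\de V_s$; the martingale part drops out because $\int_0^t\mathbf 1_{\{Z_s=a\}}\,\de\langle Z\rangle_s=0$. So the process you construct is Revuz--Yor's $L^{a-}_t=L^a_t-2\int_0^t\mathbf 1_{\{Z_s=a\}}\,\de V_s$. The two agree when $Z$ is a local martingale but not in general: for $Z=|B|$ and $a=0$ your process vanishes identically, whereas $L^0_t(|B|)=2L^0_t(B)$.

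Your self-contained argument is therefore the one that justifies the formula with the strict inequality, which the literal citation does not. The price is that it produces the left-limit (in $a$) version of the local time. If you also want to match your process with an occupation-density representation, as in the remark following the statement, you need the occupation times formula and the one-sided window $(a-\varepsilon,a]$.
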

\begin{rem}
The local time $L_t^a(Z)$ is a continuous, nondecreasing adapted process,
which admits the representation
\begin{align}\label{localtime}
L_t^a(Z)=\lim_{\varepsilon\downarrow0}\frac{1}{\varepsilon}
\int_0^t \mathbf 1_{\{|Z_s-a|<\varepsilon\}}
\,\de\nm{Z}_s,
\end{align}
where $\nm{Z}_s$ denotes the quadratic variation of $Z$.
\end{rem}

\begin{lem}\label{lem:positivemass}
Suppose that \autoref{A4}, and \autoref{A7}-\autoref{A8} hold true. Let $(\lambda(t))_{t\ge0}$ be a solution to
\begin{align}\label{eqperlambda}
\de\lambda(t)
= \R(y(t),\Psi)\,\de t+ \sqrt{2\sigma_\lambda}\,\Pi_{\lambda(t)}\,\de W^Q(t),
\end{align}
with initial condition $\lambda_0\in\cP(U)$, and $\Psi\in \cP(\Y)$. Then

\begin{align}
\lambda(t)\in\cP(U)\qquad\text{for all }t\ge0,\ \text{a.s..}    
\end{align}
\end{lem}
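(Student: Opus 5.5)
The proof has two parts: conservation of total mass, and preservation of positivity. The first is exact and algebraic; the second is a Tanaka-formula argument applied to each scalar process $\nm{\lambda(t),\phi}$ with $\phi\ge0$.

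\textbf{Mass conservation.} Test the weak formulation \eqref{integ1} against the constant function $x^{\ast}=1$. By \autoref{A4}, $\nm{\R(y(s),\Psi),1}=0$, so the drift contributes nothing. By the definition of $\Pi_\lambda$ we have $\Pi^{\ast}_{\lambda}1 = 1-\nm{1,1}\cdot 1 = 0$ whenever $\nm{\lambda,1}=1$. More generally, a direct computation gives
\begin{align*}
\nm{\Pi_\lambda h,1}=\nm{h,1}\bigl(1-\nm{\lambda,1}\bigr).
\end{align*}
Set $m(t)\coloneqq\nm{\lambda(t),1}$. Then $m$ solves the linear scalar SDE
\begin{align*}
\de m(t) = \sqrt{2\sigma_\lambda}\,\bigl(1-m(t)\bigr)\,\de \beta(t),
\end{align*}
where $\beta$ is a real continuous martingale with $\de\nm{\beta}_t\le c_Q\,\de t$ by \eqref{hyp1}. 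This SDE has a unique solution, and $m\equiv1$ is a solution with $m(0)=1$. Hence $\nm{\lambda(t),1}=1$ for all $t$ a.s., and along the trajectory the stochastic integrand $\Pi^{\ast}_{\lambda(t)}1$ vanishes.

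\textbf{Positivity via Tanaka.} Fix a nonnegative $\phi\in C_b(U)$; since $U$ is compact it suffices to take Lipschitz $\phi$, by density. Let $Z_t\coloneqq\nm{\lambda(t),\phi}$. By \eqref{integ1} and \autoref{thm:mamporia}, $Z$ is a continuous real semimartingale with
\begin{align*}
\de Z_t=\nm{\R(y(t),\Psi),\phi}\,\de t+\de M_t, \qquad \de\nm{M}_t=2\sigma_\lambda\nm{Q\Pi^{\ast}_{\lambda(t)}\phi,\Pi^{\ast}_{\lambda(t)}\phi}\,\de t\le 2\sigma_\lambda Z_t^{+}\,\de t,
\end{align*}
where the inequality on the quadratic variation is \autoref{A8}. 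Apply \autoref{thm:Tanaka} with $a=0$; note $Z_0^-=0$ because $\lambda_0\in\cP(U)$:
\begin{align*}
Z_t^{-}=-\int_0^t\mathbf 1_{\{Z_s<0\}}\nm{\R,\phi}\,\de s-\int_0^t\mathbf 1_{\{Z_s<0\}}\,\de M_s+\tfrac12 L^0_t(Z).
\end{align*}
Each term is handled as follows.

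First, the local time vanishes. By \eqref{localtime}, $L^0_t\le\liminf_{\varepsilon\downarrow 0}\varepsilon^{-1}\int_0^t\mathbf 1_{\{|Z_s|<\varepsilon\}}\,2\sigma_\lambda Z_s^+\,\de s\le 2\sigma_\lambda\liminf_{\varepsilon\downarrow 0}\int_0^t\mathbf 1_{\{0\le Z_s<\varepsilon\}}\,\de s = 2\sigma_\lambda\int_0^t\mathbf 1_{\{Z_s=0\}}\,\de s$. To remove this last term, use the standard fact $\int_0^t \mathbf 1_{\{Z_s=0\}}\,\de\nm{Z}_s=0$, or apply Tanaka at levels $a<0$, where the noise is degenerate ($\de\nm{M}=0$ on $\{Z<0\}$), and let $a\uparrow0$. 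Either way $L^0_t=0$.

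Second, the martingale term vanishes, since on $\{Z_s<0\}$ we have $\de\nm{M}_s=0$, so $\int\mathbf 1_{\{Z_s<0\}}\,\de M_s$ has zero quadratic variation.

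Third, the drift term has the right sign. Localize with the stopping time $\tau_r$, the exit time of $y$ from $B^{\Y}_r$. By \autoref{A7}, $\nm{\R,\phi}\ge-\delta_r Z_s$, and on $\{Z_s<0\}$ this gives $-\nm{\R,\phi}\le\delta_r Z_s = -\delta_r Z^-_s$.

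Combining the three, $Z^-_{t\wedge\tau_r}\le-\delta_r\int_0^{t\wedge\tau_r}Z_s^-\,\de s$, so Gronwall forces $Z^-\equiv0$ up to $\tau_r$. Since $\tau_r\uparrow\infty$ (the solution is non-explosive on $[0,T]$), letting $r\to\infty$ gives $Z^-\equiv0$ for all $t$.

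\textbf{Conclusion.} Take a countable dense family of nonnegative Lipschitz $\phi$ and use continuity of paths to get a single null set outside which $\nm{\lambda(t),\phi}\ge0$ for every $t$ and every $\phi\ge0$. Then $\lambda(t)$ is a positive functional of mass one on $C(U)$, i.e.\ an element of $\cP(U)$. The main obstacle is the vanishing of the local time at $0$, since \autoref{A8} only gives degeneracy of the noise linear in $Z^+$. I would make this rigorous via the occupation-times formula $\int_{\Tt} L^a_t\,a^{-1}\mathbf 1_{(0,\varepsilon)}(a)\,\de a=\int_0^t Z_s^{-1}\mathbf 1_{(0,\varepsilon)}(Z_s)\,\de\nm{Z}_s\le 2\sigma_\lambda t$. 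Together with right-continuity of $a\mapsto L^a_t$, finiteness of this integral is incompatible with $L^0_t>0$, because $\int_0^\varepsilon a^{-1}\,\de a$ diverges.
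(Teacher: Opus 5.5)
Your proof follows the same route as the paper's: mass conservation by testing against the constant $1$, then Tanaka's formula for $Z_t^-$ with $Z_t=\nm{\lambda(t),\phi}$, where \autoref{A8} kills the local time (and, in your version, also the martingale part on $\{Z<0\}$) and \autoref{A7} gives the drift the right sign. Where you differ, you are more careful than the paper: its claim $\nm{\Pi_{\lambda}h,1}=0$ tacitly presupposes $\nm{\lambda(t),1}=1$, which your uniqueness argument for $\de m=\sqrt{2\sigma_\lambda}(1-m)\,\de\beta$ avoids; its local-time estimate as written only tends to $2\sigma_\lambda\int_0^t\mathbf 1_{\{Z_s=0\}}\,\de s$, which your occupation-times argument repairs (more simply, $Z_s^+=0$ on $\{Z_s=0\}$, so the indicator may be taken as $\mathbf 1_{\{0<Z_s<\varepsilon\}}$, whose time integral vanishes as $\varepsilon\downarrow0$); and your localization at $\tau_r$ supplies what the paper's unlocalized use of the constant $\delta_r$ leaves out.
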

\begin{proof}
We split the proof into conservation of mass and positivity. Let us first prove the conservation of mass. Notice that
\begin{align}
\de\nm{\lambda(t),1}&= \nm{\R(y(t),\Psi),1}\de t+ \sqrt{2\sigma_\lambda}\,\nm{\Pi_{\lambda(t)}\,\de W^Q(t),1}.
\end{align}
By \autoref{A4} and by the definition of $\Pi_\lambda$,
\begin{align}
 \nm{\R(y,\Psi),1}=0,\hskip 0,2cm \nm{\Pi_\lambda h,1}   
\end{align}
for all $(y,\Psi)$ and $h\in\F(U)$.  
Therefore,
\begin{align}
\de\nm{\lambda(t),1}=0.    
\end{align}

and since $\langle\lambda_0,1\rangle=1$, we obtain
\begin{align}
\langle \lambda_t,1\rangle=1\quad\text{for all }t\ge0.    
\end{align}
Let us now prove positivity. Let $\phi\in C(U)$ with $\phi\ge0$ and define

\begin{align}
 Z_{t}\coloneqq \nm{\lambda(t),\phi}.   
\end{align}
Then $(Z_t)_{t\ge0}$ is a real-valued continuous semimartingale satisfying
\begin{align}
\de Z_{t}=\nm{\R(y(t),\Psi),\phi}\de t+ \sqrt{2\sigma_\lambda}\nm{\Pi_{\lambda(t)}\,\de W^Q(t),\phi}.
\end{align}
We apply Tanaka's formula given in \autoref{thm:Tanaka} to the negative part $Z_t^-=\max\{-Z_t,0\}$:
\begin{align*}
\mathrm d Z_t^-
&= -\mathbf 1_{\{Z_t<0\}}\,\mathrm d Z_t
+ \frac12\,\mathrm d L_t^0(Z),
\end{align*}
where $L_t^0(Z)$ denotes the local time of $Z$ at $0$.  Let us prove that $L_{t}^{0}(Z)$ is zero. Notice that the quadratic variation of $Z$ is given by

\begin{align}
\de \nm{Z}_{t}=2\sigma_{\lambda}\nm{Q\Pi_{\lambda(t)}^{\ast}\phi,\Pi_{\lambda(t)}^{\ast}\phi}\de t.    
\end{align}
Then by \autoref{A8} one gets that

\begin{align}
\de \nm{Z}_{t}\leq 2\sigma_{\lambda}Z_{t}^{+}\de t.  
\end{align}
Then 

\begin{align}
\frac{1}{\varepsilon}
\int_0^t \mathbf 1_{\{|Z_s|<\varepsilon\}}
\,\de\nm{Z}_s &\leq \frac{2\sigma_{\lambda}}{\varepsilon}
\int_0^t \mathbf 1_{\{|Z_s|<\varepsilon\}}
\,Z_{s}^{+}\de s\\
&\leq \frac{2\sigma_{\lambda}}{\varepsilon}\varepsilon \int_{0}^{t}\mathbf 1_{\{|Z_s|<\varepsilon\}}\de s,
\end{align}
and by using the formula \eqref{localtime} we conclude that $L_{t}^{0}(Z)=0$. Now, by \autoref{A7},

\begin{align}
\nm{\R(y(t),\Psi),\phi}\ge -\delta_r \nm{\lambda(t),\phi}= -\delta_r Z_t.
\end{align}
Hence, on the set $\{Z_t<0\}$,
\begin{align}
-\mathbf 1_{\{Z_t<0\}}\nm{\R(y_t,\Psi),\phi}\le \delta_r Z_t^-.    
\end{align}
Taking expectations and using that the stochastic integral has zero mean, we obtain

\begin{align}
\frac{\de}{\de t}\E[Z_t^-]
\le \delta_r\,\E[Z_t^-].  
\end{align}
Since $\lambda_0\in\cP(U)$, we have $Z_0^-=0$, and Gr\"onwall's lemma yields
\begin{align}
\E[Z_t^-]=0\quad\text{for all }t\ge0.    
\end{align}
As $Z_t^-\ge0$, this implies $Z_t^-=0$ almost surely, i.e.
\begin{align}
\langle \lambda(t),\phi\rangle\ge0.   
\end{align}
Since $\phi\ge0$ was arbitrary, $\lambda_t$ is a nonnegative measure. Combining positivity with conservation of mass, we conclude that
$\lambda(t)\in\cP(U)$ for all $t\ge0$, almost surely.
\end{proof}
\begin{prop}\label{prop:R2}
Assume that \eqref{hyp1} holds true. Then for each $\lambda\in \F(U)$ the stochastic integral $\int_{0}^{t}\Pi_{\lambda}\de W^{Q}(s)$ is well defined as an $\F(U)$--valued process, and
\begin{align}
  \E\norma{\int_{0}^{t}\Pi_{\lambda} \de W^{Q}(s)}_{{\rm BL}}^{2}<+\infty.  
\end{align}
Furthermore, the process $\int_{0}^{t}\Pi_{\lambda} \de W_{s}^{Q}$ has a.s. continuous sample paths.
\end{prop}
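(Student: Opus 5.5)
The plan is to deduce Proposition~\ref{prop:R2} from \autoref{thm:mamporia} with the deterministic, time-independent integrand $\phi(s,\omega)\equiv\Pi_{\lambda}$. Two things have to be checked: that $\Pi_\lambda\in\H_{Q}(L(\F(U),\F(U)))$, and that the associated operator $L_{\Pi_\lambda}$ belongs to $\mathscr{R}_{2}(\F(U))$.

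\textbf{Membership in $\H_{Q}$.} Non-anticipation is trivial, since $\phi$ is constant in $(t,\omega)$. Boundedness of $\Pi_\lambda$ follows from $\|\Pi_\lambda h\|_{\BL}\le \|h\|_{\BL}+|\nm{h,1}|\,\|\lambda\|_{\BL}\le(1+\|\lambda\|_{\BL})\|h\|_{\BL}$, because $\|1\|_{\rm Lip}=1$. Its adjoint is $\Pi_\lambda^{\ast}x^{\ast}=x^{\ast}-\nm{\lambda,x^{\ast}}1$, with norm at most $1+\|\lambda\|_{\BL}$. Combining this with \eqref{hyp1} gives
\begin{align*}
\int_0^T\!\!\int_\Omega \nm{\Pi_\lambda Q\Pi_\lambda^{\ast}x^{\ast},x^{\ast}}\,\de t\,\de\P = T\,\nm{Q\Pi_\lambda^{\ast}x^{\ast},\Pi_\lambda^{\ast}x^{\ast}}\le T c_Q(1+\|\lambda\|_{\BL})^2\|x^{\ast}\|^2<\infty,
\end{align*}
which is \eqref{nonanti}.

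\textbf{The $\mathscr{R}_2$ condition.} Here $L_{\Pi_\lambda}=t\,\Pi_\lambda Q\Pi_\lambda^{\ast}=:A$, so we must verify \eqref{condition_dual} for this $A$. This is the main obstacle, because \eqref{hyp1} alone only bounds $Q$ as a map $(\F(U))^{\ast}\to\F(U)$ and does not obviously give the $2$-summing-type estimate. I would proceed as follows.

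First, use the factorization of a bounded positive symmetric operator through its reproducing kernel Hilbert space $H_A$: there is $j:H_A\to\F(U)$ with $A=jj^{\ast}$ and $\|j\|^2=\|A\|\le t c_Q(1+\|\lambda\|_{\BL})^2$. Then $\nm{Ax^{\ast},x^{\ast}}=\|j^{\ast}x^{\ast}\|_{H_A}^2$, and the right-hand side of \eqref{condition_dual} equals $\bigl(\sum_i\|j^{\ast}x_i^{\ast}\|_{H_A}^2\bigr)^{1/2}$, which is at least $\max_i\|j^{\ast}x_i^{\ast}\|_{H_A}$. The left-hand side is $\bigl(\sum_i\|jj^{\ast}x_i^{\ast}\|_{\BL}^2\bigr)^{1/2}\le\|j\|\bigl(\sum_i\|j^{\ast}x_i^{\ast}\|_{H_A}^2\bigr)^{1/2}$. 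I would try to match these two sides directly, by taking $x^{\ast}$ in the supremum to be a normalized combination aligned with the family $(j^{\ast}x_i^{\ast})_i$, using a Cauchy--Schwarz/duality argument in $H_A$. Everything reduces to $c=\|j\|$, a constant depending only on $c_Q$, $t$ and $\|\lambda\|_{\BL}$.

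If the direct matching fails because the supremum is over a single $x^{\ast}$, I would fall back on one of two routes. The first is to show that $\Pi_\lambda$ preserves the $\mathscr R_2$ property of $Q$, interpreting \eqref{hyp1} together with the standing assumption that $Q$ is a Gaussian covariance as giving $Q\in\mathscr R_2$. The second is to exploit that $\Pi_\lambda$ differs from the identity by the rank-one operator $h\mapsto\nm{h,1}\lambda$: write $\int_0^t\Pi_\lambda\,\de W^Q=W^Q(t)-\nm{W^Q(t),1}\lambda$ and treat each term separately. Here $\nm{W^Q(t),1}$ is a real Gaussian with variance $t\nm{Q1,1}\le tc_Q$, and the remaining term is handled by \autoref{thm:mamporia} with $\phi=I$.

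\textbf{Conclusion.} Once the $\mathscr R_2$ condition is in place, \autoref{thm:mamporia} on $[0,T]$ gives existence of the $\F(U)$-valued integral, the bound $\E\|\cdot\|_{\BL}^2<\infty$, and a.s.\ continuous sample paths, which is exactly the statement of Proposition~\ref{prop:R2}.
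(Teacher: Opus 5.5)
\textbf{There is a genuine gap.} It sits exactly at the step you flag as the main obstacle, and it cannot be closed from \eqref{hyp1}.

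With $A=jj^{\ast}$ and $h_i\coloneqq j^{\ast}x_i^{\ast}$, the right-hand side of \eqref{condition_dual} is not $c\bigl(\sum_i\norma{h_i}_{H_A}^2\bigr)^{1/2}$. Since $j^{\ast}((\F(U))^{\ast})$ is dense in $H_A$, it equals $c\sup_{\norma{h}_{H_A}\le 1}\bigl(\sum_i|\nm{h_i,h}_{H_A}|^2\bigr)^{1/2}$, the weak-$\ell^2$ norm of $(h_i)$. For an orthonormal family this is $c$, while $\bigl(\sum_{i\le n}\norma{h_i}^2\bigr)^{1/2}=\sqrt{n}$. So no single $x^{\ast}$ \emph{aligned with the family} can match the two sides, and $c=\norma{j}$ is not an admissible constant.

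In fact \eqref{condition_dual} says precisely that $j$ is $2$-summing. Since $2$-summing maps on a Hilbert space are $\gamma$-radonifying, the condition forces $A$ to be a Gaussian covariance, which boundedness alone cannot give. A concrete obstruction:
\begin{itemize}
\item Take $U=[0,1]$. The zero-mass space $T(U)$ is isomorphic to $L^1[0,1]$, so it contains an isomorphic copy of $\ell^2$. Let $j:\ell^2\to T(U)$ be such an embedding and set $Q\coloneqq jj^{\ast}$.
\item Then \eqref{hyp1} holds, and $\Pi_\lambda j=j$, so $\Pi_\lambda Q\Pi_\lambda^{\ast}=Q$.
\item Condition \eqref{condition_dual} fails for every $c$.
\item Since $j$ is non-compact, $Q$ is not a Gaussian covariance. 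Any $\F(U)$-valued $\xi$ with $\nm{\xi,x^{\ast}}=T^t_{\Pi_\lambda}x^{\ast}$ would be Gaussian with covariance $tQ$, so the integral has no $\F(U)$-valued version at all.
\end{itemize}
The paper's proof uses the same strategy as your main route and breaks at the corresponding point. After bounding each $\norma{\Pi_\lambda Q\Pi_\lambda^{\ast}x_i^{\ast}}_{\BL}$ by a supremum, its \emph{squaring and summing} step moves $\sum_i$ inside the $\sup$, which is the wrong direction. So the paper does not supply the missing idea either.

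\textbf{Your fallbacks presuppose what is missing.} The first needs $Q\in\mathscr{R}_2$, which \eqref{hyp1} does not give. Being a Gaussian covariance does not imply \eqref{condition_dual} in a general Banach space either: $\gamma$-radonifying operators are $2$-summing in cotype-$2$ spaces, but not, for example, in $c_0$. The second fallback, as written, handles $W^Q(t)$ through \autoref{thm:mamporia} with $\phi=I$, which again requires $tQ\in\mathscr{R}_2$.

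What does give a correct proof is the hypothesis built into Definition~\ref{whitenoise}: $Q$ is a Gaussian covariance, so $W^Q$ is a genuine $\F(U)$-valued Wiener process. Your decomposition then finishes the argument with no appeal to Mamporia:
\begin{itemize}
\item $\xi\coloneqq\Pi_\lambda W^Q(t)=W^Q(t)-\nm{W^Q(t),1}\lambda$ satisfies $\nm{\xi,x^{\ast}}=\nm{W^Q(t),\Pi_\lambda^{\ast}x^{\ast}}=T^t_{\Pi_\lambda}x^{\ast}$.
\item $\E\norma{\xi}_{\BL}^2\le(1+\norma{\lambda}_{\BL})^2\,\E\norma{W^Q(t)}_{\BL}^2<\infty$ by Fernique's theorem.
\item $W^Q(t)-W^Q(s)$ has the law of $\sqrt{t-s}\,\gamma$ with $\gamma\sim\mathscr{N}(0,Q)$, so $\E\norma{W^Q(t)-W^Q(s)}_{\BL}^4=(t-s)^2\,\E\norma{\gamma}_{\BL}^4$. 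Kolmogorov's criterion then gives a continuous version, which the bounded operator $\Pi_\lambda$ preserves.
\end{itemize}
This proves the proposition under the Gaussian-covariance hypothesis. It is that hypothesis, not \eqref{hyp1}, that carries the weight, contrary to the paper's claim that cylindrical $Q$ is admissible.
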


\begin{proof}
In what follows, we make use of \autoref{thm:mamporia}. In particular, we prove that $L_{\Pi_{\lambda}}: (\F(U))^{\ast}\rightarrow \F(U)$, $x^{\ast}\mapsto \int_{0}^{t}\int_{\Omega}\Pi_{\lambda}Q\Pi_{\lambda}^{\ast}(s,\omega)x^{\ast}\de s\P(\omega)$ belongs to $\mathscr{R}_{2}(\F(U))$. Recall that
\begin{align}
\Pi_\lambda(h)=h-\nm{h,1}\lambda,\qquad h\in\F(U).    
\end{align}
Notice that linearity is immediate. We now prove that $\Pi_\lambda$ is a bounded linear operator on $\F(U)$. Notice by \eqref{normLip}, we conclude that
\begin{align}
\|\Pi_\lambda(\ell)\|_{\rm BL}
\le
(1+\|\lambda\|_{\rm BL})\,\|\ell\|_{\rm BL}.   
\end{align}
Hence, the operator norm of $\Pi_{\lambda}$ satisfies

\begin{align}\label{Pi_bound}
\|\Pi_\lambda\|_{L(\F(U),F(U))}\le 1+\|\lambda\|_{\rm BL}.    
\end{align}
and its dual map $\Pi_\lambda^{\ast}\in L((\F(U))^{\ast},(\F(U))^{\ast})$ is bounded as well. Let us now prove the $\mathscr{R}_2$ condition. Let $x_1^{\ast},\dots,x_n^{\ast}\in(\F(U))^{\ast}$. By definition of the bounded--Lipschitz norm,

\begin{align}
\norma{\Pi_\lambda Q \Pi_\lambda^{\ast}x_i^{\ast}}_{\BL}
=\sup_{\|\phi\|_{\rm Lip}\le 1}
\bigl|
\nm{\Pi_\lambda Q \Pi_\lambda^{\ast}x_i^{\ast},\phi}
\bigr|.   
\end{align}

Using duality and the definition of $\Pi_\lambda^{\ast}$,

\begin{align}
\nm{\Pi_\lambda Q \Pi_\lambda^{\ast}x_i^{\ast},\phi}
=
\nm{Q\Pi_\lambda^{\ast}x_i^{\ast},\Pi_\lambda^{\ast}\phi}.
\end{align}
By \eqref{Pi_bound}, we have $\|\Pi_\lambda^{\ast}\phi\|\le (1+\norma{\lambda}_{\BL})\|\phi\|_{\rm Lip}\le (1+\norma{\lambda}_{\BL})$. Now, using the Cauchy--Schwarz inequality associated with the positive symmetric operator $Q$,
\begin{align}
|\nm{Q u^{\ast},v^{\ast}}|
\le
\sqrt{\nm{Q u^{\ast},u^{\ast}}}\sqrt{\nm{Q v^{\ast},v^{\ast}}},    
\end{align}
together with assumption \eqref{hyp1}, we obtain

\begin{align}
|\nm{Q\Pi_\lambda^{\ast}x_i^{\ast},\Pi_\lambda^{\ast}\phi}|
\le \sqrt{c_Q}\,\sqrt{\nm{Q\Pi_\lambda^{\ast}x_i^{\ast},\Pi_\lambda^{\ast}x_i^{\ast}}}.    
\end{align}
Taking the supremum over $\phi$ with $\|\phi\|_{\rm Lip}\le 1$, we deduce

\begin{align}
\norma{\Pi_\lambda Q \Pi_\lambda^{\ast}x_i^{\ast}}_{\rm BL}
\le
\sqrt{c_Q}\,\sup_{\nm{Q y^{\ast},y^{\ast}}\le 1}
|\nm{Q\Pi_\lambda^{\ast}x_i^{\ast},y^{\ast}}|.    
\end{align}
Squaring and summing over $i=1,\dots,n$ yields

\begin{align}
\sum_{i=1}^n
\norma{\Pi_\lambda Q \Pi_\lambda^{\ast}x_i^{\ast}}_{\rm BL}^2
\le
c_Q
\sup_{\nm{Q y^{\ast},y^{\ast}}\le 1}
\sum_{i=1}^n
|\nm{\Pi_\lambda Q \Pi_\lambda^{\ast}x_i^{\ast},y^{\ast}}|^2.
\end{align}
This is precisely condition \eqref{condition_dual}. Hence
$L_{\Pi_\lambda}\in\mathscr R_2(\F(U))$, and the claim follows from
Theorem~\ref{thm:mamporia}.
\end{proof}
Furthermore, it is possible to prove the well-posedness of the integral form for \eqref{eqperlambda}.
\begin{lem}
Let us suppose that \autoref{A4}-\autoref{A5}, \autoref{A7}-\autoref{A8}, and consider $y(t)=(x(t),\lambda(t))$ be a random variable with values in $\Y$ such that $\E\sup_{r\in [0,T]}\norma{y(r)}_{\Y}^{2}<+\infty$, and where $\lambda(r)$ follows the differential equation \eqref{eqperlambda}. Then the integral form for \eqref{eqperlambda}, namely, 

\begin{align}
\lambda(t)=\lambda(0)+\int_{0}^{t}\R(y(t),\Psi)\de t + \int_{0}^{t}\Pi_{\lambda(s)}\de W^{Q}(s)
\end{align}
makes sense with respect to the seminorm $\E\norma{\cdot}_{\BL}^{2}$.
\end{lem}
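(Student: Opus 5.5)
The plan is to show that each of the three terms on the right-hand side of the integral form is a well-defined $\F(U)$-valued random variable with finite second moment in $\norma{\cdot}_{\BL}$. The identity is then meaningful as an equality in $L^2(\Omega;\F(U))$, that is, modulo the seminorm $\E\norma{\cdot}_{\BL}^2$.

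\emph{Initial datum.} Since $\lambda(0)\in\cP(U)$, we have $\norma{\lambda(0)}_{\BL}\le 1$, so this term is trivially controlled.

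\emph{Drift term.} I treat $\int_0^t\R(y(s),\Psi)\,\de s$ as a Bochner integral in the separable Banach space $\F(U)$. Strong measurability in $s$ follows from the measurability of $y$ and the continuity of $\R(\cdot,\Psi)$. The growth bound \autoref{A5} gives $\norma{\R(y(s),\Psi)}_{\BL}\le M_\R(1+\norma{y(s)}_{\cS}+m_1(\Psi))$. By Cauchy--Schwarz in time,
\begin{align*}
\E\norma{\int_0^t\R(y(s),\Psi)\,\de s}_{\BL}^2\le 3TM_\R^2\int_0^T\big(1+\E\norma{y(s)}_{\cS}^2+m_1(\Psi)^2\big)\de s<\infty,
\end{align*}
using the hypothesis $\E\sup_r\norma{y(r)}^2<\infty$ and $m_1(\Psi)<\infty$. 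Here I take $\Psi\in\cP_1(\Y)$, as \autoref{A5} requires.

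\emph{Stochastic term.} Here the integrand $\Pi_{\lambda(s)}$ is random and time dependent, while \autoref{prop:R2} treats only a fixed $\lambda$. I would redo the argument of \autoref{prop:R2} for the process $s\mapsto\Pi_{\lambda(s)}$ and verify the hypotheses of \autoref{thm:mamporia}.

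First, non-anticipation. The map $\omega\mapsto\nm{\Pi_{\lambda(s)}^\ast x^\ast,h}=\nm{x^\ast,h}-\nm{h,1}\nm{x^\ast,\lambda(s)}$ is $\F_s$-measurable because $\lambda$ is adapted.

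Second, membership in $\H_Q$. By \eqref{hyp1} and \eqref{Pi_bound},
\begin{align*}
\nm{\Pi_{\lambda(s)}Q\Pi^\ast_{\lambda(s)}x^\ast,x^\ast}\le c_Q(1+\norma{\lambda(s)}_{\BL})^2\norma{x^\ast}^2,
\end{align*}
which is integrable on $[0,T]\times\Omega$ thanks to the square-moment assumption. Better still, by \autoref{lem:positivemass} we have $\lambda(s)\in\cP(U)$ a.s., so $\norma{\lambda(s)}_{\BL}\le1$ and the bound becomes uniform, namely $4c_Q\norma{x^\ast}^2$.

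Third, the $\mathscr R_2$ property of the averaged operator $L_\phi x^\ast=\int_0^t\int_\Omega\Pi_{\lambda(s)}Q\Pi_{\lambda(s)}^\ast x^\ast\,\de s\,\de\P$. I plan to repeat the Cauchy--Schwarz estimate of \autoref{prop:R2} pointwise in $(s,\omega)$, with the uniform constant $2\sqrt{c_Q}$, and then integrate. Once this is established, \autoref{thm:mamporia} yields the existence of $\int_0^t\Pi_{\lambda(s)}\,\de W^Q(s)$ with finite second moment.

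Finally, I identify the process with the weak formulation \eqref{integ1}. Pairing each term with $x^\ast\in(\F(U))^\ast$ reproduces \eqref{integ1}. Since $(\F(U))^\ast\simeq\mathrm{Lip}_0(U)$ separates points, both sides agree as $\F(U)$-valued elements in $L^2$, i.e.\ up to $\E\norma{\cdot}_{\BL}^2$-null sets.

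The main obstacle is the $\mathscr R_2$ verification for the random, time-dependent integrand. The Cauchy--Schwarz step in \autoref{prop:R2} controls the quantity $\nm{Q\Pi^\ast x_i^\ast,\Pi^\ast x_i^\ast}$ pointwise, but the right-hand side of \eqref{condition_dual} must be expressed through $L_\phi$ itself, which is an average. My first attempt is to apply Jensen/Cauchy--Schwarz in $L^2([0,t]\times\Omega)$ to pass the supremum outside the averages. If this fails, I would instead use the uniform bound $\norma{\lambda(s)}_{\BL}\le1$ to dominate $L_\phi$ by a constant multiple of a fixed operator that already satisfies the $\mathscr R_2$ property.
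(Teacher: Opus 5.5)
Your treatment of $\lambda(0)$ and of the drift is fine, except for one point. You get measurability of $s\mapsto\R(y(s),\Psi)$ from continuity of $\R$, which is \autoref{A6}, and \autoref{A6} is not among the lemma's hypotheses.

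The genuine gap is the step you flag yourself. You never show that $L_\phi x^\ast=\int_0^t\E[\Pi_{\lambda(s)}Q\Pi^\ast_{\lambda(s)}x^\ast]\,\de s$ satisfies \eqref{condition_dual}. Without that, \autoref{thm:mamporia} cannot be invoked, and neither of your fallbacks closes the step.

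\emph{Averaging route.} In \eqref{condition_dual} the supremum sits outside the sum and runs over $\{\nm{L_\phi x^\ast,x^\ast}\le1\}$. The squared right-hand side is $c^2$ times the top eigenvalue of the Gram matrix $(\nm{L_\phi x_i^\ast,x_k^\ast})_{i,k}$, and this matrix is the $\de s\otimes\P$-integral of the pointwise Gram matrices. The top eigenvalue is convex, so averaging pointwise inequalities only bounds the left side by the \emph{average of the top eigenvalues}. That quantity is larger than what \eqref{condition_dual} requires, so Jensen points the wrong way. There is also no valid pointwise inequality to average: the ``squaring and summing'' step in \autoref{prop:R2} yields $\sum_i\sup_{y^\ast}|\cdot|^2$, which dominates $\sup_{y^\ast}\sum_i|\cdot|^2$ rather than being dominated by it.

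\emph{Domination route.} The bound $\norma{\lambda(s)}_{\BL}\le1$ is too little information. Let $\mathbf 1$ be the constant function, so that $\Pi_\lambda^\ast x^\ast=x^\ast-\nm{\lambda,x^\ast}\mathbf 1$. If $\sigma^2\coloneqq\nm{Q\mathbf 1,\mathbf 1}>0$ and $v\coloneqq Q\mathbf 1/\sigma^2$, then
\[
\nm{\Pi_\lambda Q\Pi_\lambda^\ast x^\ast,x^\ast}=\big(\nm{Qx^\ast,x^\ast}-\sigma^2\nm{v,x^\ast}^2\big)+\sigma^2\nm{v-\lambda,x^\ast}^2 .
\]
The bracket is nonnegative by Cauchy--Schwarz, so $L_\phi$ dominates $\sigma^2\int_0^t\E[(v-\lambda(s))\otimes(v-\lambda(s))]\,\de s$.

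Condition \eqref{condition_dual} says exactly that the factorisation $j_A\colon H_A\to\F(U)$, with $A=j_Aj_A^\ast$, is $2$-summing. Hence operators in $\mathscr R_2$, and all positive operators they dominate, are Gaussian covariances. But $\F(U)$ is in general not of type $2$, and the mixture above need not be a Gaussian covariance. Take $U=\{x_0\}\cup\{x_n\}_{n\ge1}$ with $d_U(x_n,x_0)=a_n\downarrow0$ and $d_U(x_n,x_m)=a_n+a_m$. Then $\F(U)\simeq\ell_1$. Let $\lambda$ be a deterministic continuous path in $\cP(U)$ that goes linearly from $\delta_{x_0}$ to $\delta_{x_n}$ and back during disjoint subintervals of $[0,t]$ of lengths $p_n$. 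The resulting operator is a Gaussian covariance only if $\sum_n a_n\sqrt{p_n}<\infty$. This fails, e.g., for $a_n=1/\log(n+1)$ and $p_n\propto n^{-2}$.

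So a bound on $\norma{\lambda(s)}_{\BL}$ alone cannot yield a dominating $\mathscr R_2$ operator; you need extra structure. One sufficient condition is $Q\in\mathscr R_2$ together with $\nm{Q\mathbf 1,\mathbf 1}=0$. Then $Q\mathbf 1=0$, so $\Pi_\lambda Q\Pi_\lambda^\ast=Q$ and $L_\phi=tQ$.

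\emph{Comparison with the paper.} The paper's proof never attempts this existence step. It reads ``$\lambda$ follows \eqref{eqperlambda}'' as already supplying the stochastic integral, and ``makes sense'' as a mean-square statement. It proves $\E\norma{\lambda(t)-\lambda(s)}_{\BL}^2\le C'(t-s)$ by combining your \autoref{A5} drift estimate with the isometry-type bound
\[
\E\norma{\int_s^t\Pi_{\lambda(r)}\de W^Q(r)}_{\BL}^2\le 2C_Q\int_s^t\big(1+\E\norma{\lambda(r)}_{\BL}^2\big)\,\de r ,
\]
citing only Jensen and \eqref{Pi_bound}. To match the statement as intended, drop your existence claim and prove these moment bounds instead. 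Be aware, though, that this isometry-type bound is asserted rather than derived. In $\F(U)$ it cannot hold with a constant independent of the integrand: truncations of the example above make the left side blow up while the right side stays bounded. So the paper's argument silently relies on the same missing structure.
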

\begin{proof}
Let us take $0\leq s<t\leq T$. Notice that 

\begin{align}
\E\norma{\lambda(t)-\lambda(s)}_{\BL}^{2}&\leq 2\E\norma{\int_{s}^{t}\R(y(r),\Psi) \de r}_{\BL}^{2} + 
  \E\norma{\int_{s}^{t}\Pi_{\lambda(r)} \de W^{Q}(r)}_{{\rm BL}}^{2}\\  
 &\leq 2(t-s)\int_{s}^{t}\E\norma{\R(y(r),\Psi)}_{\BL}^{2}\de r+ 2C_{Q}\int_{s}^{t}(1+\E\norma{\lambda(r)}_{\BL}^{2})\de (r)
\end{align}
where in the last inequality we have used Jensen inequality, and \eqref{Pi_bound}. By \autoref{A5} one has for a suitable positive constant $C>0$ that

\begin{align}
\int_{s}^{t}\E\norma{\R(y(r),\Psi)}_{\BL}^{2}\de r&\leq \int_{s}^{t}\E\left(1+\norma{x(r)}_{2}+m_{1}(\Psi)\right)^{2}\de r\\
\leq C(t-s)^{2}
\end{align}
where $y(r)=(x(r),\lambda(r))$, and in the last inequality we have used the assumption about the second moment of $y(\cdot)$, and that $m_{1}(\Psi)<+\infty$. Hence, we can find a positive constant $C'$ such that

\begin{align}
\E\norma{\lambda(t)-\lambda(s)}_{\BL}^{2}\leq C'(t-s),    
\end{align}
and we are done.
\end{proof}
\begin{rem}
For simplicity we have assumed the uniform second moment bound 
\begin{align}
\E\sup_{r\in[0,T]}\|y(r)\|_{\Y}^{2}<+\infty.    
\end{align}
In the case of the interacting particle system \eqref{sistema1}, this estimate is established a posteriori from the linear growth assumptions on $H$ and $\R$. We also point out that, in the absence of the stochastic perturbation in the $\lambda$-equation, the equivalence between the differential and the integral formulations has been discussed in \cite[Subsection 2.2]{AFMS}. In that deterministic setting, the integral formulation admits a natural weak interpretation. The present stochastic framework extends that structure to the case of cylindrical noise.
\end{rem}
\subsection{The vectorial multiagent system}
In what follows, we introduce the  vector-valued variable $\y\coloneqq (y^{1},\ldots,y^{N})\in \Y^{N}\subset \cS^{N}$, which we endow with the norm 
\begin{align}
\norma{\y}_{\cS^{N}}\coloneqq \frac{1}{N}\sum_{i=1}^{N}\norma{y^{i}}_{\cS}.
\end{align}
Noitce that the natural space to study the well posedness of the system described through \eqref{eq:stochastic-system-Q} is $\cS^{N}$. However, notice that $\hat{\Y}^{N}\coloneqq (\Tt^{d})^{N}\times(\cP(U))^{N}$, and $\hat{\cS}^{N}\coloneqq (\Tt^{d})^{N}\times(\F(U))^{N}$ can be also endowed with the same norm $\norma{\y}_{\cS^{N}}$, making them more convenient spaces to analyze such a system. In the next, we consider $\blambda\coloneqq (\lambda^{1},\ldots,\lambda^{N})\in (\cP(U))^{N}$, and $\X\coloneqq (X^{1},\ldots, X^{N})\in (\Tt^{d})^{N}$. For each $\Psi\in \cP_{1}(\Y)$, we consider the map $\vH^{N}(\cdot,\Psi): \hat{\Y}^{N}\rightarrow (\Tt^{d})^{N}$ which is defined through

\begin{align}\label{H_vectorialfield}
\vH^{N}(\X,\blambda,\Psi)\coloneqq (H(X^{1},\lambda^{1},\Psi),\ldots,H(X^{N},\lambda^{N},\Psi)).
\end{align}
Furthemore, we define the map $\bR_{\Psi}^{N}:\hat{\Y}^{N}\rightarrow (\F(U))^{N}$ as
\begin{align*}
\bR_{\Psi}^{N}(\X,\blambda)\coloneqq (\R(X^{1},\lambda^{1},\Psi),\ldots,\R(X^{N},\lambda^{N},\Psi)),
\end{align*}
and we set $\bB(t)\coloneqq (B^{1}(t),\ldots, B^{N}(t))$ as the $d\times N$-valued Brownian motion. Then we write \eqref{eq:stochastic-system-Q} in the compact form

\begin{align}\label{system2}
\begin{dcases}
&\X(t)=\X_{0}+ \int_{0}^{t} \vH^{N}(\X(s),\blambda(s),\mu^{N}(s))\de s+ \sqrt{2\sigma_{x}}\bB(t),\\
&\blambda(t)=\blambda_{0}+ \int_{0}^{t}\bR^{N}(\X(s),\blambda(s),\mu^{N}(s))\de s+\sqrt{2\sigma_{\lambda}}\int_{0}^{t}\Pi_{\blambda(t)}\de W^{\bQ}(s)
\end{dcases}
\end{align}
where
\begin{align}
W^{\bQ}(t)\coloneqq (W^{Q,1}(t),\ldots, W^{Q,N}(t)).   
\end{align}
In what follows, we suppose that ${\bf X}_{0}\in L^{2}(\Omega;(\Tt^{d})^{N})$, and ${\bf \lambda}_{0}\in (\cP_{1}(U))^{N}$ for all $N\in\N$. Since the above is an SDE, we now recall what a strong solution is for \eqref{system2}.

\begin{defn}\label{defstrong}
Suppose that $(\Omega,\B,\P)$ is a complete probability space endowed with the $\sigma$-algebra generated by $(\bB(t)\otimes W^{\bQ}(t))_{t\in[0,T]}$ and that we denote as $({\mathcal F}_{t})_{t\in[0,T]}$. We say that an $\cS^{N}$-valued predictable process $\by(t)=(\X(t),\blambda(t))$, $t\in[0,T]$ is a strong solution of \eqref{system2} if $\by(t)$ satisfies $\P$-a.s. \eqref{system2}, and it has a continuous version.   
\end{defn}
We denote by $L^{2}(\cS^{N})\coloneqq L^{2}(\Omega\times [0,T];\cS^{N})$, and we endow it with the norm

\begin{align}\label{normavol1}
\norma{Y}_{T,\alpha,\cS^{N}}\coloneqq \E\left(\int_{0}^{T}\exp(-\alpha t)\norma{Y(s)}_{\cS^{N}}^{2}\de s\right),\hskip 0,2cm Y\in L^{2}(\cS^{N}),
\end{align}
for some fixed $\alpha>0$ that will be fixed later on. Here, we denote by $\M_{{\rm ad}}^{2}(\cS^{N})\coloneqq L_{{\rm ad}}^{2}(\Omega\times [0,T];\cS^{N})$ the space of adapted processes with respect to the $\sigma$-algebra $(\bB(t)\otimes W^{\bQ}(t))_{t\in[0,T]}$ with values in $\cS^{N}$, and such that the norm \eqref{normavol1} is finite. Similarly, we define $\M_{{\rm ad}}^{2}(\hat{\Y}^{N})$.
\begin{rem}
We discuss the well-posedness of system \eqref{system2} for every choice of initial condition $(\X_{0},\blambda_{0})\in\hat{\Y}^{N}$. Furthermore, we are interested to looking forward the existence and uniqueness of solutions $\by\in \M_{{\rm ad}}^{2}(\hat{\Y}^{N})$.  
\end{rem}

\section{Main results}\label{section:mainresults}
In this section, we collect our main results. In the next Theorem, we study the well-posedness of \eqref{system2}.
\begin{thm}\label{importantprop}
Let us fix a filtered probability space $(\Omega,\B,\cP)$ endowed with a complete filtration $(\mathcal{F}_{t})_{t\in [0,T]}$ generated by $(\bB(t)\otimes W_{t}^{\bQ})_{t\in[0,T]}$. Assume that \autoref{A1}-\autoref{A8} hold true.  Then there exists $\alpha>0$ such that for every choice $\overline{\by}_{0}\coloneqq(\overline{\X}_{0},\overline{\blambda}_{0})$ as initial condition of \eqref{system2} such that $\overline{\by}_{0}\in L^{2}(\hat{\Y}^{N})$, the system \eqref{system2} has a unique solution $\by$ that belongs to $\M_{{\rm ad}}^{2}(\hat{\Y}^{N})$.
\end{thm}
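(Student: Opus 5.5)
The plan is to combine a localization (truncation) argument with a Banach fixed-point argument in the weighted space $\M_{\rm ad}^{2}(\cS^{N})$ endowed with the norm \eqref{normavol1}. The constant $\alpha$ is the one appearing in that norm.

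\textbf{Truncation.} Fix $r>0$ and let $\rho_r:\cS\to B^{\Y}_r$ denote a Lipschitz retraction onto the compact ball, for instance a radial cutoff composed with the identity on $\cP(U)$. Replace $H,\R$ by the truncated fields $H_r(y,\Psi)\coloneqq H(\rho_r y,(\rho_r)_\#\Psi)$ and $\R_r$ defined in the same way. By \autoref{A1}, \autoref{A2} and \autoref{A6}, these are globally Lipschitz in $(y,\Psi)$. The empirical-measure map $\by\mapsto\mu^N$ is $1$-Lipschitz, since
\begin{align*}
\W_1(\mu^N_{\by},\mu^N_{\by'})\le \tfrac1N\sum_i\norma{y^i-y'^i}_{\cS}=\norma{\by-\by'}_{\cS^N}.
\end{align*}
Hence $\vH^N_r$ and $\bR^N_r$ are globally Lipschitz on $\hat\cS^N$.

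\textbf{Contraction for the truncated system.} Define $\Phi$ on $\M^2_{\rm ad}(\cS^N)$ by the right-hand side of \eqref{system2} with the truncated drifts, and with noise term $\int_0^t\Pi_{\blambda(s)}\de W^{\bQ}(s)$. The noise term is well defined by \autoref{thm:mamporia} and the computation in \autoref{prop:R2}; the same estimate, applied with $\Pi_{\lambda}-\Pi_{\lambda'}=-\nm{\cdot,1}(\lambda-\lambda')$, gives the Itô-type bound
\begin{align*}
\E\norma{\int_0^t(\Pi_{\lambda(s)}-\Pi_{\lambda'(s)})\de W^Q}^2_{\BL}\le C c_Q\int_0^t\E\norma{\lambda(s)-\lambda'(s)}^2_{\BL}\de s .
\end{align*}
Combining this with Cauchy--Schwarz for the drift terms yields $\E\norma{\Phi\by(t)-\Phi\by'(t)}^2\le K\int_0^t\E\norma{\by(s)-\by'(s)}^2\de s$. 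Multiplying by $e^{-\alpha t}$ and integrating gives
\begin{align*}
\norma{\Phi\by-\Phi\by'}_{T,\alpha}\le \tfrac{K}{\alpha}\norma{\by-\by'}_{T,\alpha}.
\end{align*}
Choosing $\alpha>K$ makes $\Phi$ a contraction, so there is a unique fixed point $\by_r$. Continuity of paths follows from \autoref{thm:mamporia} for the stochastic integral and from absolute continuity of the drift integrals.

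\textbf{Invariance of $\hat\Y^N$ and removal of the truncation.} By \autoref{lem:positivemass}, applied componentwise with \autoref{A4}, \autoref{A7} and \autoref{A8}, each $\lambda^i_r(t)$ remains in $\cP(U)$, so $\norma{\lambda^i_r}_{\BL}\le 1$. The linear growth conditions \autoref{A3} and \autoref{A5}, together with $m_1(\mu^N)\le\norma{\by}_{\cS^N}$, Doob's (or BDG) inequality for the Brownian part, and Gronwall's lemma, give a bound on $\E\sup_{t\le T}\norma{\by_r(t)}^2$ that is uniform in $r$. Introduce the stopping times $\tau_r\coloneqq\inf\{t:\max_i\norma{y^i_r(t)}_{\cS}>r\}$. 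Uniqueness of the truncated problems shows that $\by_r=\by_{r'}$ on $[0,\tau_r]$ for $r<r'$, and Chebyshev's inequality gives $\P(\tau_r<T)\to0$. One can therefore define $\by\coloneqq\lim_r\by_r$, which solves \eqref{system2} and belongs to $\M^2_{\rm ad}(\hat\Y^N)$. Uniqueness follows by comparing any two solutions up to $\tau_r$, where the truncated and untruncated fields agree.

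\textbf{Main obstacle.} I expect the hardest step to be the Lipschitz estimate for the state-dependent projected noise. Two points need care: the map $\lambda\mapsto\Pi_\lambda$ enters multiplicatively against a cylindrical $W^Q$, and \autoref{A8} together with \autoref{lem:positivemass} must be invoked along the fixed-point iterates, which need not themselves lie in $\cP(U)$. To handle this, I would first try running the contraction in the full space $\cS^N$, where \autoref{A8} is stated for every $\lambda\in\F(U)$ and $\Pi_\lambda$ is affine in $\lambda$, and only afterwards apply \autoref{lem:positivemass} to the fixed point.
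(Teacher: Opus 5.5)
Your route differs from the paper's. The paper runs a Picard iteration directly in $\hat{\Y}^{N}$, proves the second-moment bound \eqref{daprov}, and then uses the local constants $C_{H,r},L_{\R,r}$ with $r=r_T$; you instead localize by truncation and stopping times. Your localization is the right way to use \autoref{A1}, \autoref{A2}, \autoref{A6}: a moment bound does not confine trajectories to $B_r^{\Y}$, so the paper's choice $r=r_T$ is not justified. However, your invariance step has a genuine gap.

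As you anticipate, the fixed-point map does not preserve $\hat{\Y}^{N}$. For $Z_t=\nm{(\Phi\by)^{\lambda}(t),\phi}$, both the quadratic-variation bound from \autoref{A8} and the drift bound from \autoref{A7} involve $\nm{\lambda(t),\phi}$ of the input rather than $Z_t$. So the argument of \autoref{lem:positivemass} does not apply; the same objection hits the paper's claim that its Picard iterates stay in $(\cP(U))^{N}$. You therefore have to work in $\cS^{N}$, but $H$ and $\R$ are defined only on $\Y\times\cP(\Y)$. A radial cutoff in $x$ composed with the identity in $\lambda$ does not map $\cS$ into $B_r^{\Y}$, so $H_r,\R_r$ are undefined at $\lambda\in\F(U)\setminus\cP(U)$. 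What you need is a Lipschitz retraction $p:\F(U)\to\cP(U)$, or a Lipschitz extension of the $\F(U)$-valued field $\R$. You do not construct either, and neither is available off the shelf for infinite $U$.

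Even granting $p$, the truncated drift only satisfies $\R_r(y,\Psi)+\delta_r\,p(\lambda)\ge0$. On $\{Z_t<0\}$ the lemma needs $-\nm{\R_r,\phi}\le\delta_r Z_t^{-}$, while you only get $-\nm{\R_r,\phi}\le\delta_r\nm{p(\lambda(t)),\phi}$, a nonnegative quantity unrelated to $Z_t^{-}$. Hence $\lambda_r^i(t)\in\cP(U)$ is not established. Neither, then, are $\norma{\lambda_r^i}_{\BL}\le1$, your uniform-in-$r$ moment bound, or $\P(\tau_r<T)\to0$, since all of them rest on it.

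Appealing to \autoref{A8} off $\cP(U)$ cannot repair this, because \autoref{A8} is degenerate even when restricted to $\lambda\in\cP(U)$. For $\lambda=\delta_a$ one has $\Pi^{\ast}_{\delta_a}\phi=\phi-\phi(a)$, so every Lipschitz $\phi\ge0$ with $\phi(a)=0$ gives $\nm{Q\phi,\phi}\le0$, hence $Q\phi=0$ by Cauchy--Schwarz. Writing $\psi=\psi(a)+(\psi-\psi(a))^{+}-(\psi-\psi(a))^{-}$ and using symmetry yields $\nm{Q\psi,\eta}=q\,\psi(a)\eta(a)$ with $q=\nm{Q1,1}$. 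Comparing two points $a\neq b$ then forces $q=0$, i.e.\ $Q=0$. So under the stated hypotheses the noise you single out as the main obstacle vanishes, and the $\lambda$-equation is the random ODE $\de\lambda=\R\,\de t$. The missing ingredient is an invariance (viability) argument for the closed convex set $\cP(U)$ under the mass condition \autoref{A4} and the sub-tangency condition \autoref{A7}, as in the deterministic theory of \cite{AFMS}. That produces solutions in $\cP(U)$ without ever evaluating $H,\R$ off $\Y$.

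If \autoref{A8} were modified to allow a nonzero $Q$ with $\nm{Q1,1}>0$, your It\^o-type Lipschitz bound would also be unjustified: it does not follow from \autoref{prop:R2}, which concerns a fixed deterministic $\lambda$. Since $(\Pi_\lambda-\Pi_{\lambda'})h=-\nm{h,1}(\lambda-\lambda')$, the bound amounts to a type-$2$ inequality for $\int(\lambda-\lambda')\,\de\nm{W^Q,1}$, which fails in $\F(U)$ for infinite $U$. The paper's \eqref{itobound1} has the same defect.
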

In what follows, we state our propagation of chaos result.
\begin{thm}[Propagation of chaos]\label{lem:conv_muN_detailed}
Suppose that \autoref{A1}-\autoref{A8} hold true. Let $(\X(t),\blambda(t))$ be the unique strong solution to \eqref{system2} in $\M_{\rm ad}^2(\hat\Y^N)$ with initial condition $\overline{\by}_{0}\in L^{2}(\hat{\Y}^{N})$. Then there exists a unique measure-valued process
\begin{align}\label{misurasol}
\mu_{\cdot} \in C([0,T]; \cP_1(\Y)\cap \cP_{2}(\Y))    
\end{align}
satisfying the nonlinear McKean--Vlasov SDE
\begin{align}\label{eq:limitMV_detailed}
\begin{dcases}
\de Y(t) &= H(Y(t), \Lambda(t), \mu(t)) \, \de t + \sqrt{2\sigma_x} \, \de B(t),\\
\de \Lambda(t) &= \R(Y(t), \Lambda(t), \mu(t)) \, \de t + \sqrt{2\sigma_\lambda} \, \Pi_{\Lambda(t)} \, \de W^Q(t),\\
\mu(t)&={\rm law}(Y(t),\Lambda(t)),
\end{dcases}
\end{align}
with initial law $\mu_0 = \mathcal L(\overline{X}^1_0, \overline{\lambda}^1_0)$. Moreover, the empirical measure $\mu^N_t$ converges in expectation of the Wasserstein-1 distance to $\mu(t)$:
\begin{align}\label{eq:W1conv}
\lim_{N\rightarrow +\infty}\sup_{t \in [0,T]} \E \big[\W_1(\mu^N(t), \mu(t)) \big] =0.
\end{align}
In particular, the system \eqref{system2} is \emph{chaotic with limit law $\mu(t)$ in the sense of \autoref{defn:propchaos}}.
\end{thm}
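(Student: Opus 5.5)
The argument runs along the classical Sznitman route: first well-posedness of the nonlinear McKean--Vlasov system \eqref{eq:limitMV_detailed}, then a synchronous coupling between the particle system and $N$ independent copies of the limit, and finally a law of large numbers for i.i.d.\ samples in $\W_1$.

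\emph{Step 1: well-posedness of the limit.} Fix a curve $\nu\in C([0,T];\cP_1(\Y))$ with uniformly bounded first moments and freeze the measure argument in \eqref{eq:limitMV_detailed}. The resulting equation is a single-particle version of \eqref{system2} with $N=1$ and a time-dependent measure. It is solved by the same argument as \autoref{importantprop}: a localisation by stopping times together with \autoref{A1}, \autoref{A6} and the weighted norm \eqref{normavol1}. Positivity and mass conservation of $\Lambda$ come from \autoref{lem:positivemass}, and \autoref{prop:R2} makes the noise term meaningful. Linear growth (\autoref{A3}, \autoref{A5}), the It\^o isometry with \eqref{hyp1} and \eqref{Pi_bound}, and the compactness of $\cP(U)$ (so that $\norma{\Lambda}_{\BL}\le 1$) give $\E\sup_t\norma{(Y,\Lambda)}_{\cS}^2\le C(T)$, uniformly in $\nu$ with bounded moments.

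Define $\Phi(\nu)(t)={\rm law}(Y^\nu(t),\Lambda^\nu(t))$. For two curves $\nu^1,\nu^2$, drive both solutions with the same noise and the same initial datum. The noise difference is
\begin{align*}
\Pi_{\Lambda^1}-\Pi_{\Lambda^2} = -\nm{\cdot,1}(\Lambda^1-\Lambda^2),
\end{align*}
so its contribution is Lipschitz in $\norma{\Lambda^1-\Lambda^2}_{\BL}$ with constant $\sqrt{c_Q}$. Using \autoref{A2} and \autoref{A6} on the event where both processes stay in $B_r^\Y$, a Gronwall argument gives
\begin{align*}
\sup_{s\le t}\W_1(\Phi(\nu^1)(s),\Phi(\nu^2)(s))\le C_r\int_0^t \W_1(\nu^1(s),\nu^2(s))\,\de s + \text{(tail term)}.
\end{align*}
The tail term is controlled by $\P(\tau_r\le T)\lesssim C(T)/r$. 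A Picard iteration, or a contraction for the exponentially weighted sup-distance, then yields a unique fixed point $\mu$. Continuity in time, and membership in $\cP_2(\Y)$, follow from the moment bounds and continuity of paths.

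\emph{Step 2: coupling.} On the same probability space, let $(\overline{Y}^i,\overline{\Lambda}^i)$ solve \eqref{eq:limitMV_detailed} with the noises $B^i,W^{Q,i}$ and the initial data $(\overline{X}^i_0,\overline{\lambda}^i_0)$. These are i.i.d.\ with law $\mu(t)$, provided the initial data are i.i.d., which is implicit in $\mu_0=\mathcal L(\overline X^1_0,\overline\lambda^1_0)$ and will be assumed. With $\overline{\mu}^N(t)$ the empirical measure of these copies, the triangle inequality gives
\begin{align*}
\W_1(\mu^N(t),\mu(t))\le \frac1N\sum_{i=1}^N\norma{(X^i-\overline{Y}^i,\lambda^i-\overline{\Lambda}^i)(t)}_{\cS} + \W_1(\overline{\mu}^N(t),\mu(t)).
\end{align*}
The first term is estimated by subtracting the two equations. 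The drifts are handled by \autoref{A1}, \autoref{A2} and \autoref{A6}. The measure argument gives $\W_1(\mu^N,\mu)\le \W_1(\mu^N,\overline\mu^N)+\W_1(\overline\mu^N,\mu)$, and $\W_1(\mu^N,\overline\mu^N)$ is bounded by the same average. The noise difference is treated as in Step 1. Exchangeability then gives, for $e(t)=\E\sup_{s\le t}\norma{\cdot}$ localised,
\begin{align*}
e(t)\le C_r\int_0^t \bigl(e(s)+\E\W_1(\overline\mu^N(s),\mu(s))\bigr)\,\de s.
\end{align*}

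\emph{Step 3: conclusion and main obstacle.} Since $\mu(t)$ has uniformly bounded second moments, the law of large numbers in $\W_1$ applies: $\sup_t\E\W_1(\overline{\mu}^N(t),\mu(t))\to0$. This uses $\cP_2$ moments for uniform integrability together with the equicontinuity of $t\mapsto\mu(t)$. Gronwall then closes the argument. Chaos in the sense of \autoref{defn:propchaos} follows from the standard equivalence between convergence of empirical measures to a deterministic limit and chaoticity (Sznitman), or directly from the coupling, since $(X^i,\lambda^i)-(\overline Y^i,\overline\Lambda^i)\to0$ in $L^1$.

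The main obstacle is localisation. The constants are only local, and the $\cS$-ball contains $\Tt^d$, which need not be compact in this sense. I would introduce $\tau_r=\inf\{t:\norma{y^i(t)}_{\cS}\vee\norma{\overline y^i(t)}_{\cS}>r\}$ and run the estimates up to $\tau_r$. Off this event, the $\W_1$ quantities are bounded via Cauchy--Schwarz by $(\E\sup\norma{\cdot}^2)^{1/2}\P(\tau_r\le T)^{1/2}\lesssim C/\sqrt r$. I would then let $N\to\infty$ first and $r\to\infty$ second. A secondary subtlety is that \autoref{A2} and \autoref{A6} require the measures to be supported in $B_r^\Y$. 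I would handle this by replacing $\mu$ with its pushforward under a retraction onto $B_r^\Y$, whose $\W_1$-error is again controlled by the tail moments.
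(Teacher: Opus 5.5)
Your overall route is the paper's: synchronous coupling of the particles with $N$ i.i.d.\ copies of the limit, Gronwall, and a law of large numbers for $\overline{\mu}^N$. The genuine gap is the localization. You rightly single it out as the crux, but you do not actually resolve it.

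\textbf{Why the localization fails.} \autoref{A1}, \autoref{A2} and \autoref{A6} only give constants $C_r\coloneqq C_{H,r}+L_{\R,r}$ for $y\in B_r^{\Y}$ and measures in $\cP(B_r^{\Y})$, with no control on their growth in $r$. Hence every tail term that enters a Gronwall inequality gets multiplied by $e^{cC_rT}$.
\begin{itemize}
\item \emph{Step 2, per-particle stopping time.} The drift difference of particle $i$ is controlled through $\W_1(\mu^N(s),\mu(s))$. The bound $\W_1(\mu^N(s),\overline{\mu}^N(s))\le \frac1N\sum_{j}\norma{y^j(s)-\overline{y}^j(s)}_{\cS}$ averages over \emph{all} particles, including those past their own $\tau_r$. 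So your displayed inequality for $e(t)$ is missing a tail term $C_r\epsilon(r)$ inside the integral (your $\epsilon(r)\sim C/\sqrt r$). The natural estimate then only gives $\limsup_{N}\sup_t\E\,\W_1(\mu^N(t),\mu(t))\lesssim (1+C_rT e^{2C_rT})\,\epsilon(r)$, which has no reason to vanish as $r\to\infty$.
\item \emph{Step 2, common stopping time.} You need one anyway to have $\mu^N(s)\in\cP(B_r^{\Y})$. But then, for fixed $r$, $\P(\tau_r\le T)\ge 1-(1-p_r)^N\to 1$. Here $p_r>0$ is the probability that a single copy $\overline{y}^1$ leaves $B_r^{\Y}$ before $T$, which is positive because its spatial part is driven by $B^1$. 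So neither order of limits closes.
\item \emph{Step 1.} A bound $C_r\int_0^t\W_1(\nu^1(s),\nu^2(s))\,\de s+\epsilon(r)$ is not a contraction. For two fixed points it only yields $\sup_t\W_1(\mu^1(t),\mu^2(t))\le \epsilon(r)e^{C_rT}$.
\item \emph{The retraction.} The hypotheses constrain $H$ and $\R$ only at measures in $\cP(B_r^{\Y})$. Nothing controls $H(y,\mu(s))-H(y,(\pi_r)_{\#}\mu(s))$ when $\mu(s)$ has unbounded support, which it always has here because of the Brownian component. The smallness of $\W_1(\mu(s),(\pi_r)_{\#}\mu(s))$ is useless without a Lipschitz bound that covers $\mu(s)$. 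For the same reason, \autoref{A1} and \autoref{A6} give no Lipschitz control in $y$ for your frozen equation in Step 1.
\end{itemize}
This is not a mere technicality. Purely local Lipschitz control of the measure dependence is known not to ensure McKean--Vlasov uniqueness in general (cf.\ Scheutzow's counterexamples).

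\textbf{Comparison with the paper and what would fix it.} The paper's proof never localizes. It fixes one radius from the moment bounds \eqref{daprov} and \eqref{usedopo2} and then uses $C_{H,r}$, $L_{\R,r}$ as if they were global constants. It also does not prove well-posedness of \eqref{eq:limitMV_detailed}, only the time continuity of $\mu$. That is exactly why its Gronwall steps close, and it amounts to an implicit global Lipschitz hypothesis. To turn your proposal into a proof you must make such an ingredient explicit. One option is Lipschitz bounds for $H$ and $\R$ in $(y,\Psi)$ valid on all of $\Y\times\cP_1(\Y)$ with $r$-independent constants. Then no stopping times are needed, Step 1 becomes a genuine contraction in the weighted sup-distance, and Steps 2--3 go through as you wrote them. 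The other option is a quantitative growth condition on $r\mapsto C_{H,r},L_{\R,r}$, matched to the tail of $\sup_{t\le T}\norma{y(t)}_{\cS}$, ensuring $e^{2C_rT}\epsilon(r)\to0$. The remaining ingredients of your plan are sound: the i.i.d.\ initial data implicit in the statement, $\norma{\Lambda}_{\BL}\le 1$ from positivity, uniformity in $t$ of the law of large numbers via equicontinuity, and chaos from the coupling.
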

\begin{thm}[Exponential convergence to equilibrium]\label{thm:exponentialconvergence}
Let us suppose that \autoref{A4},\autoref{A7}--\autoref{A12} hold true. Furthermore, suppose that the constants given in the hypotheses satisfy
\begin{align}
\kappa \coloneqq m_x + m_\lambda - 2L_\mu - \sigma_{\lambda}C_{Q}> 0,
\end{align}
and consider $\mu_{\cdot}$ as in \eqref{misurasol}. Then there exists a unique invariant probability measure $\mu_\infty \in \cP_{2}(\Y)$ such that for every initial condition $(Y(0),\Lambda(0))\in L^{2}(\Omega,\Y)$ of \eqref{eq:limitMV_detailed},
\begin{align}
\W_{2}(\mu(t), \mu_\infty)
\le
e^{-\kappa t} \W_{2}(\mu_0,\mu_\infty),
\end{align}
where $\mu_{0}=\L((Y(0),\Lambda(0)))$ is the law of $(Y(0),\Lambda(0))$.
\end{thm}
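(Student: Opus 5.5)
The plan is a synchronous coupling argument for two copies of the McKean--Vlasov dynamics \eqref{eq:limitMV_detailed}, followed by Banach's fixed point theorem on $(\cP_2(\Y),\W_2)$.

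\textbf{Step 1: coupling and Itô computation.} Fix two initial conditions $(Y(0),\Lambda(0))$ and $(Y'(0),\Lambda'(0))$ in $L^2(\Omega,\Y)$, coupled optimally for $\W_2(\mu_0,\mu_0')$. Drive both solutions (which exist by \autoref{lem:conv_muN_detailed}) with the \emph{same} Brownian motion $B$ and the same noise $W^Q$. Apply Itô's formula to
\begin{align*}
D(t)\coloneqq |Y(t)-Y'(t)|^2+\|\Lambda(t)-\Lambda'(t)\|_{\BL}^2 .
\end{align*}
In the spatial component the Brownian terms cancel, so only the drift remains. By \autoref{A9}--\autoref{A10} and \autoref{A12}, splitting $\nabla_xF(Y,\Lambda,\mu)-\nabla_xF(Y',\Lambda',\mu')$ into its $x$-, $\lambda$- and $\mu$-increments, this contributes at most $-2m_x|Y-Y'|^2+2L_\mu|Y-Y'|\,\W_1(\mu,\mu')$ plus a cross term. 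For the measure component, \autoref{lem:positivemass} guarantees $\Lambda,\Lambda'\in\cP(U)$, so \autoref{A11} applies. The pairing of the drift difference against $\Lambda-\Lambda'$ then gives $-2m_\lambda\|\Lambda-\Lambda'\|_{\BL}^2$ plus an $L_\mu$ term. The noise no longer cancels, because
\begin{align*}
\Pi_\Lambda h-\Pi_{\Lambda'}h=-\nm{h,1}(\Lambda-\Lambda') .
\end{align*}
Its Itô correction is therefore bounded by $\sigma_\lambda C_Q\|\Lambda-\Lambda'\|_{\BL}^2$, using \eqref{hyp1} and the operator bound \eqref{Pi_bound}.

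\textbf{Step 2: Gronwall estimate.} Take expectations; the martingale parts vanish. Use $\W_1(\mu(t),\mu'(t))\le \W_2(\mu(t),\mu'(t))\le (\E D(t))^{1/2}$ and Young's inequality on the cross terms. This should give
\begin{align*}
\frac{\de}{\de t}\E D(t)\le -2\kappa\,\E D(t).
\end{align*}
Gronwall then yields $\W_2(\mu(t),\mu'(t))\le e^{-\kappa t}\W_2(\mu_0,\mu_0')$, with the same $\kappa$ as in the statement.

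\textbf{Step 3: invariant measure.} Let $P_t$ denote the nonlinear flow $\mu_0\mapsto\mu(t)$; it is a semigroup by uniqueness. For fixed $t_0>0$, Step 2 shows that $P_{t_0}$ is a strict contraction on the complete space $(\cP_2(\Y),\W_2)$. It therefore has a unique fixed point $\mu_\infty$. This fixed point is also invariant under every $P_s$: since $P_sP_{t_0}=P_{t_0}P_s$, the measure $P_s\mu_\infty$ is again fixed by $P_{t_0}$, hence equals $\mu_\infty$. Uniqueness of the invariant measure follows from the contraction. Finally, take $\mu_0'=\mu_\infty$ in Step 2 to obtain the claimed estimate.

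\textbf{Main obstacle.} The difficulty lies in the $\lambda$-component of the Itô computation. The squared norm $\|\cdot\|_{\BL}^2$ on the Banach space $\F(U)$ is not a Hilbert norm, so Itô's formula does not apply to it directly, and the pairing in \autoref{A11} must be read through a duality map. My first attempt is to work with a regularised smooth norm on $\F(U)$, or to test against a countable norming family of $1$-Lipschitz functions. I would then verify that the resulting second-order term is controlled exactly by $\sigma_\lambda C_Q\|\Lambda-\Lambda'\|_{\BL}^2$, reading $C_Q$ as the constant produced by \eqref{hyp1} and \eqref{Pi_bound}. The rest of the argument is routine.
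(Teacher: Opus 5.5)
Your Steps 1--2 are essentially the paper's argument: synchronous coupling, differentiate $\E D$, Young, Gr\"onwall. The inequality $\frac{\de}{\de t}\E D\le -2\kappa\,\E D$ with $\kappa=m_x+m_\lambda-2L_\mu-\sigma_\lambda C_Q$, which you expect at the end of Step 2, does not follow from these steps, and the paper's proof has the same defect. There are two problems.

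\textbf{Convexity constants do not add.} \autoref{A10}--\autoref{A11} supply $-2m_x\E|\Delta Y|^2-2m_\lambda\E\norma{\Delta\Lambda}_{\BL}^2$. This is bounded by $-2\min\{m_x,m_\lambda\}\,\E D$, not by $-2(m_x+m_\lambda)\,\E D$, because the two constants act on different components. In fact no argument can give the stated rate, since a rate above $m_x$ is impossible. Take $F=\frac{m_x}{2}|x|^2$, which is compatible with \autoref{A9}, \autoref{A10}, \autoref{A12}, and any admissible $G$. Then $Y$ is an Ornstein--Uhlenbeck process. Starting from $Y(0)=a$, the $x$-marginal of $\mu(t)$ has mean $e^{-m_xt}a$, while the $x$-marginal of any invariant $\mu_\infty$ is $\mathscr{N}(0,\frac{\sigma_x}{m_x}I)$. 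Projection onto $x$ is $1$-Lipschitz, so $\W_2(\mu(t),\mu_\infty)\ge e^{-m_xt}|a|$. This violates the claimed bound for large $t$ as soon as $m_\lambda>2L_\mu+\sigma_\lambda C_Q$, i.e.\ $\kappa>m_x$.

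\textbf{The cross terms are uncontrolled.} The terms you mention are $\nabla_xF(Y',\Lambda,\mu)-\nabla_xF(Y',\Lambda',\mu)$ and $D_\lambda G(Y,\Lambda',\mu)-D_\lambda G(Y',\Lambda',\mu)$. None of the listed hypotheses controls them: \autoref{A10} and \autoref{A11} each freeze the other variable, and \autoref{A12} only concerns $\mu$. So Young's inequality has no constant to work with. The paper hides this by applying \autoref{A10} to $H(Y^1,\Lambda^1,\mu)-H(Y^2,\Lambda^2,\mu)$ even though $\Lambda^1\neq\Lambda^2$. Closing Step 2 needs an extra hypothesis: either a cross-Lipschitz bound, which lowers the rate further, or joint monotonicity of $-(H,\R)$ in $(x,\lambda)$. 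What you can then honestly obtain is a rate of the form $\min\{m_x,\,m_\lambda-\sigma_\lambda C_Q\}-cL_\mu$, not the stated $\kappa$.

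\textbf{Step 3 is a genuinely different and better route.} The paper tries to show $t\mapsto\mu(t)$ is Cauchy via $\W_2(\mu(t),\mu(s))=\W_2((\hat{\Phi}\mu(s))(t-s),(\hat{\Phi}\mu_0)(t-s))$. This tacitly identifies $\mu(s)$ with $(\hat{\Phi}\mu_0)(t-s)=\mu(t-s)$, which is wrong. Your Banach fixed point for $P_{t_0}$ on the complete space $(\cP_2(\Y),\W_2)$, together with $P_sP_{t_0}=P_{t_0}P_s$, correctly gives existence, uniqueness and invariance of $\mu_\infty$ from any contraction estimate with a positive rate. It does need the flow to be globally defined on $\cP_2(\Y)$, i.e.\ the well-posedness hypotheses \autoref{A1}--\autoref{A3}, \autoref{A5}--\autoref{A6}, which the statement omits.

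\textbf{Your ``main obstacle''.} Your identity $\Pi_\Lambda h-\Pi_{\Lambda'}h=-\nm{h,1}(\Lambda-\Lambda')$ already settles the noise part. Let $\beta(t)=\nm{W^Q(t),1}$, a real Brownian motion with variance $qt$, where $q=\nm{Q1,1}\le c_Q$. The noise of the difference equation is $-\sqrt{2\sigma_\lambda}\,\Delta\Lambda\,\de\beta$. Hence $\exp\bigl(\sqrt{2\sigma_\lambda}\beta(t)+\sigma_\lambda qt\bigr)\Delta\Lambda(t)$ solves a pathwise ODE in $\F(U)$, so no smoothing of the norm is needed. The It\^o correction is exactly $2\sigma_\lambda q\,\E\norma{\Delta\Lambda}_{\BL}^2$; the relevant constant is $\nm{Q1,1}$, not \eqref{Pi_bound}.

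What remains is the drift. It enters as $2\nm{j(\Delta\Lambda),\Delta\R}$, where $j(v)\in(\F(U))^\ast$ is a duality element with $\nm{j(v),v}=\norma{v}_{\BL}^2$ and $\norma{j(v)}=\norma{v}_{\BL}$. So \autoref{A11} has to be stated as dissipativity with respect to $j$. As written, it pairs the Fr\'echet derivative $D_\lambda G\in(\F(U))^\ast$ with $\lambda-\lambda'$, which is not the quantity that appears.
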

\begin{rem}
The exponential rate $\kappa$ depends explicitly on the structural constants appearing in 
\autoref{A4}--\autoref{A12}, namely on the monotonicity constants $m_x$, $m_\lambda$, 
the Lipschitz constant $L_\mu$, and the intensity of the noise through $\sigma_\lambda C_Q$. 
The condition $\kappa>0$ reflects a balance between the dissipativity of the drift fields 
and the destabilizing effect of the mean-field interaction and of the noise. From a conceptual viewpoint, this result is reminiscent of the exponential contraction estimates obtained for gradient flows in Wasserstein spaces. In the deterministic framework developed by Ambrosio--Gigli--Savaré, exponential convergence is typically derived from $\lambda$-convexity of the driving functional along Wasserstein geodesics and relies on deep geometric properties of the underlying metric measure space, such as lower Ricci curvature bounds and the validity of an Evolution Variational Inequality (EVI). In contrast, our analysis does not rely on any geometric curvature condition on the state space. The contraction estimate is instead obtained through direct monotonicity assumptions on the drift fields and the assumptions involving the noise operator. Thus, while the resulting exponential decay is formally similar, its mechanism is substantially different from the geometric one in the sense of \cite{ambrosio2014calculus}.
\end{rem}

\section{Proofs}\label{sec:proofs}
In this part, we prove our main results.
\begin{proof}[Proof of \autoref{importantprop}]
In what follows, we aim to prove this result by using the Picard iteration. Let $\by_0=(\overline{\X}_0,\overline{\blambda}_0)\in L^2(\hat{\Y}^N)$.
Define recursively, for $n\in\N$,
\begin{align}\label{Picard}
\begin{dcases}
\X_{n+1}(t)
= \overline{\X}_0
+ \int_0^t H(\X_n(s),\blambda_n(s),\mu_{n}^{N}(s))\,\de s
+ \sqrt{2\sigma_x}\,\bB(t),\\[0.25cm]
\blambda_{n+1}(t)
= \overline{\blambda}_0
+ \int_0^t\R(\X_n(s),\blambda_n(s),\mu_{n}^{N}(s))\,\de s\\
\qquad\qquad
+ \sqrt{2\sigma_\lambda}
\int_0^t\Pi_{\blambda_n(s)}\,\de W_s^{\bQ},
\end{dcases}
\end{align}
where
\begin{align}
 \mu_{n}^{N}(s)=\frac{1}{N}\sum_{i=1}^{N}\delta_{(X_{n}^{i}(s),\lambda_{n}^{i}(s))}.   
\end{align}
Notice that by using the same argument of \autoref{lem:positivemass}, one gets that $\blambda_{n}(t)\in (\cP(U))^{N}$ for all $n\in\N$ $\P$-a.s.. In what follows, we show that \eqref{Picard} has a unique strong solution by means of the Picard iteration. Fix $N\in\mathbb N$ and $T>0$. Let $(\Omega,\mathcal F,(\mathcal F_t)_{t\ge0},\mathbb P)$ be the filtered probability space supporting $(\bB\otimes W^{\bQ})$. For $\by=(\X,\blambda)\in\cS^N$ define recall that
\begin{align}
\|\by\|_{\cS^N}=\frac1N\sum_{i=1}^N\Big(\|\X^i\|_{2}+\|\blambda^i\|_{\mathrm{BL}}\Big).   
\end{align}
We consider as functional space $\M^2_{\mathrm{ad}}(0,T;\cS^N)$ endowed with the norm \eqref{normavol1}. Let us first prove that there exists a constant $r_{T}>0$, independent of $n$, such that
\begin{align}\label{daprov}
\sup_{t\in [0,T]}\sup_{n\in\mathbb N}
\E\|\by_n(t)\|_{\cS^N}^2 \le r_{T}.
\end{align}
Recall that for each $i=1,\dots,N$,
\begin{align}
\X_{n+1}^i(t)
=\overline \X_0^i
+\displaystyle\int_0^t H(\X_n^i(s),\blambda_n^i(s),\mu_n^N(s))\,\de s
+\sqrt{2\sigma_x}\,B^i(t).
\end{align}
Using $(a+b+c)^2\le 3(a^2+b^2+c^2)$ and Jensen's inequality, we obtain
\begin{align}
\E\|\X_{n+1}^i(t)\|_{2}^2
&\le
3\E\|\overline \X_0^i\|_{2}^2
+
3\E\Big\|\int_0^t H(\X_n^i(s),\blambda_n^i(s),\mu_n^N(s))\de s\Big\|_{2}^2
+
6\sigma_x \E\|B^i(t)\|_{2}^2.
\end{align}
By the linear growth assumption on $H$ (see \autoref{A3}), we get
\begin{align}
\|H(\X_n^i(s),\blambda_n^i(s),\mu_n^N(s))\|_{2}
&\le
M_{H}\big(1+\|\X_{n}^{i}(s)\|_{2}+\|\blambda_{n}^{i}(s)\|_{\rm BL}+m_1(\mu_{n}^{N}(s))\big).
\end{align}
Therefore,
\begin{align}
\E\Big\|\displaystyle\int_0^t H(\X_n^i(s),\blambda_n^i(s),\mu_n^N(s))\de s\Big\|_{2}^2
&\le
t\int_0^t \E\|H(\X_n^i(s),\blambda_n^i(s),\mu_n^N(s))\|_{2}^2\de s\\
&\le tM_{H}^{2}\displaystyle\int_{0}^{t}\E\big(1+\|\X_{n}^{i}(s)\|_{2}+\|\blambda_{n}^{i}(s)\|_{\rm BL}+m_1(\mu_{n}^{N}(s))\big)^{2}\de s\\
&\leq 6tM_{H}^{2}\displaystyle\int_{0}^{t}\left(1+\E\|\X_{n}^{i}(s)\|_{2}^{2}+\E\|\blambda_{n}^{i}(s)\|_{\rm BL}^{2}+ \E(m_1(\mu_{n}^{N}(s)))^{2}\right)\de s.
\end{align}
Then we have obtained that

\begin{align}\label{F1}
\begin{aligned}
 \E\|\X_{n+1}^i(t)\|_{2}^2&\leq 3\E\|\overline \X_0^i\|_{2}^2+ 6\sigma_{x}t\\
 &+18tM_{H}^{2}\displaystyle\int_{0}^{t}\left(1+\E\|\X_{n}^{i}(s)\|_{2}^{2}+\E\|\blambda_{n}^{i}(s)\|_{\rm BL}^{2}+ \E(m_1(\mu_{n}^{N}(s)))^{2}\right)\de s.
 \end{aligned}
\end{align}

Let us now recall that 

\begin{align}
\blambda_{n+1}^i(t)
=\overline\blambda_0^i+\displaystyle\int_0^t \R(\X_n^i(s),\blambda_n^i(s),\mu_n^N(s))\,\de s
+\sqrt{2\sigma_\lambda}\int_0^t \Pi_{\blambda_n^i(s)}\,\de W_s^{\bQ}.
\end{align}
 Then by \autoref{A5}, we have 

\begin{align}
 \E\norma{\blambda_{n+1}^i(t)}_{\BL}^{2}&\leq 3\E\norma{\overline\blambda_0^i}_{\BL}^{2}+ 
 3t\int_{0}^{t}\E\norma{\R(\X_n^i(s),\blambda_n^i(s),\mu_n^N(s))}_{\BL}^{2}\de s+ 6\sigma_{\lambda}\norma{\int_0^t \Pi_{\blambda_n^i(s)}\,\de W_s^{\bQ}}_{\BL}^{2}\\
 &\leq 3\E\norma{\overline\blambda_0^i}_{\BL}^{2}+ 3tM_{\R}^{2}\displaystyle\int_{0}^{t}\E(1+\norma{\X_{n}^{i}(s)}_{2}+m_{1}(\mu_{n}^{N}(s)))^{2}\de s\\
 &+6\sigma_{\lambda}\norma{\int_0^t \Pi_{\blambda_n^i(s)}\,\de W_s^{\bQ}}_{\BL}^{2}.
\end{align}
Further, as a consequence of \autoref{prop:R2}, we have that

\begin{align}
\E\left\|\displaystyle\int_0^t\Pi_{\blambda_n^{i}(s)}\de W_s^{\bQ}\right\|_{\BL}^2
\le
4C_{Q}
\E\int_0^t\left(1+\|\blambda_n^{i}(s)\|_{\BL}\right)^2 \de s \\
\leq 8C_{Q}
\E\int_0^t\left(1+\|\blambda_n^{i}(s)\|_{\BL}^{2}\right) \de s \\
\end{align}
where we have used that the operator norm $\norma{\Pi_{\lambda}}_{{\rm op}}\leq 1+\norma{\lambda}_{\BL}$. Then

\begin{align}\label{F2}
\begin{aligned}
\E\norma{\blambda_{n+1}^i(t)}_{\BL}^{2}&\leq  3\E\norma{\overline\blambda_0^i}_{\BL}^{2}+ 4C_{Q}
\E\int_0^t\|\blambda_n^{i}(s)\|_{\BL}^2 \de s\\
&+6tM_{\R}^{2}\displaystyle\int_{0}^{t}(1+\E\norma{\X_{n}^{i}(s)}_{2}^{2}+\E(m_{1}(\mu_{n}^{N}(s)))^{2})\de s.
\end{aligned}
\end{align}
Combining \eqref{F1}, and \eqref{F2}, and letting $C\coloneqq \max\{6\sigma_{x}T, 18T^{2}M_{H}^{2},8C_{Q}(1+T), 6T^{2}M_{\R}^{2}\}$

\begin{align}
\E\|\X_{n+1}^i(t)\|_{2}^2 + \E\norma{\blambda_{n+1}^i(t)}_{\BL}^{2}&\leq 3\E\|\overline \X_0^i\|_{2}^2+ 3\E\norma{\overline\blambda_0^i}_{\BL}^{2}+ 6\sigma_{x}t\\
 &+18tM_{H}^{2}\displaystyle\int_{0}^{t}\left(1+\E\|\X_{n}^{i}(s)\|_{2}^{2}+\E\|\blambda_{n}^{i}(s)\|_{\rm BL}^{2}+ \E(m_1(\mu_{n}^{N}(s)))^{2}\right)\de s+\\
 &+4C_{Q}
\E\int_0^t\|\blambda_n^{i}(s)\|_{\BL}^2 \de s\\
&+6tM_{\R}^{2}\displaystyle\int_{0}^{t}(1+\E\norma{\X_{n}^{i}(s)}_{2}^{2}+\E(m_{1}(\mu_{n}^{N}(s)))^{2})\de s\\
&\leq 3\E\|\overline \X_0^i\|_{2}^2+ 3\E\norma{\overline\blambda_0^i}_{\BL}^{2}+ 3C+\\
&+ 2C\displaystyle\int_{0}^{t}\left(\E\norma{\X_{n}^{i}(s)}_{2}^{2}+\|\blambda_n^{i}(s)\|_{\BL}^2+\E(m_{1}(\mu_{n}^{N}(s)))^{2}\right)\de s.
\end{align}

On the other hand, we notice that 

\begin{align}
\E(m_{1}(\mu_{n}^{N}(s)))^{2}\leq\frac{2}{N}\sum_{i=1}^{N}\left(\E\norma{\X_{n}^{i}(s)}_{2}^{2}+\E\norma{\by_{n}^{i}(s)}_{\BL}^{2}\right)   
\end{align}
and thus,

\begin{align}
\frac{1}{N}\sum_{i=1}^{N}\left(\E\|\X_{n+1}^i(t)\|_{2}^2 + \E\norma{\blambda_{n+1}^i(t)}_{\BL}^{2}\right)&\leq \frac{3}{N}\sum_{i=1}^{N}\left(\E\|\overline \X_0^i\|_{2}^2+\E\norma{\overline\blambda_0^i}_{\BL}^{2}\right)+3C\\
&+\frac{6C}{N}\displaystyle\int_{0}^{t}\sum_{i=1}^{N}\left(\E\|\X_{n}^i(s)\|_{2}^2 + \E\norma{\blambda_{n}^i(s)}_{\BL}^{2}\right)\de s.
\end{align}
Define

\begin{align}\label{useddopo}
M_n(t)\coloneqq \frac{1}{N}\sum_{i=1}^{N}\left(\E\|\X_{n}^i(t)\|_{2}^2 + \E\norma{\blambda_{n}^i(t)}_{\BL}^{2}\right).  
\end{align}
Then
\begin{align}
\sup_{n\in\N}M_{n+1}(t)\le \frac{3}{N}\sum_{i=1}^{N}\left(\E\|\overline \X_0^i\|_{2}^2+\E\norma{\overline\blambda_0^i}_{\BL}^{2}\right)+3C
+6C\displaystyle\int_0^t \sup_{n\in\N}M_n(s)\de s.
\end{align}
and thus by Gr\hol{o}nwall we obtain 

\begin{align}
\sup_{t\in [0,T]}\sup_{n\in\N}M_{n}(t)\leq \left(\frac{3}{N}\sum_{i=1}^{N}\left(\E\|\overline \X_0^i\|_{2}^2+\E\norma{\overline\blambda_0^i}_{\BL}^{2}\right)+3C\right)e^{6CT}   
\end{align}
and from here, we conclude that \eqref{daprov} holds true. Let us now estimate $\W_{1}(\mu_{n}^{N}(t),\mu_{n-1}^{N}(t))$. Define the coupling
\begin{align}
\gamma_t=\frac1N\sum_{i=1}^N
\delta_{(\by_n^i(t),\by_{n-1}^i(t))}.
\end{align}
Then
\begin{align}
\W_1(\mu_{n}^{N}(t),\mu_{n-1}^{N}(t))
\le \int \|y-\tilde y\|_{\cS}\,\de\gamma_t=\frac{1}{N}\sum_{i=1}^N\|\by_n^i(t)-\by_{n-1}^i(t)\|_{\cS}.   
\end{align}

Hence
\begin{align}\label{boundwass}
\W_1(\mu_{n}^{N}(t),\mu_{n-1}^{N}(t))\le\|\by_n(t)-\by_{n-1}(t)\|_{\cS^N}.
\end{align}
From \eqref{Picard},
\begin{align}
\X_{n+1}(t)-\X_n(t)
=\displaystyle\int_0^t \Big(H(\X_n,\blambda_n,\mu_n^N)-H(\X_{n-1},\blambda_{n-1},\mu_{n-1}^N)\Big)\de s,    
\end{align}
since the Brownian term cancels. Similarly,

\begin{align}
\blambda_{n+1}(t)-\blambda_n(t)
&=\displaystyle\int_0^t\Big(\R(\X_n,\blambda_n,\mu_n^N)-\R(\X_{n-1},\blambda_{n-1},\mu_{n-1}^N)\Big) \de s\\
&+\sqrt{2\sigma_\lambda}\displaystyle\int_0^t\Big(\Pi_{\blambda_n(s)}-\Pi_{\blambda_{n-1}(s)}\Big)\de W_s^{\bQ}.
\end{align}
Let us define $\delta\by_n := \by_{n+1}-\by_n$, and let $r\coloneqq r_{T}$ as determined by \eqref{daprov}. Notice that \autoref{A1}-\autoref{A2}, one gets that for each $i=1,\ldots, N$,

\begin{align*}
&\|H(\X_n^{i},\blambda_n^{i},\mu_n^N) - H(\X_{n-1}^{i},\blambda_{n-1}^{i},\mu_{n-1}^N) \|_2\\
&\leq \|H(\X_n^{i},\blambda_n^{i},\mu_n^N) - H(\X_{n-1}^{i},\blambda_{n-1}^{i},\mu_{n}^N) \|_2+\|H(\X_{n-1}^{i},\blambda_{n-1}^{i},\mu_{n}^N) - H(\X_{n-1}^{i},\blambda_{n-1}^{i},\mu_{n-1}^N) \|_2\\
&\le C_{H,r} \| \delta \by_n^{i} \|_{\cS} + C_{H,r} \W_1(\mu_n^N, \mu_{n-1}^N) \\
&\le 2 C_{H,r} \| \delta \by_n \|_{\cS^N}.
\end{align*}
Thus
\begin{align}
\|\delta\X_{n+1}^{i}(t)\|_{2}\le 2C_{H,r}\displaystyle\int_0^t\|\delta\by_n(s)\|_{\cS^N}ds.    
\end{align}
Squaring and using Jensen inequality:

\begin{align}
\|\delta\X_{n+1}^{i}(t)\|_{2}^2\le4C_{H,r}^2 t\int_0^t\|\delta\by_n(s)\|_{\cS^{N}}^2 ds.    
\end{align}
Let us now consider the deterministic part of $\delta\blambda_{n+1}$. Similarly, notice that by \autoref{A6},
\begin{align*}
&\| \R(\X_n^{i}(s), \blambda_n^{i}(s), \mu_n^N(s)) - \R(\X_{n-1}^{i}(s), \blambda_{n-1}^{i}(s), \mu_{n-1}^N(s)) \|_{\rm BL} \\
&\le 
\| \R(\X_n^{i}, \blambda_n^{i}, \mu_n^N) - \R(\X_{n-1}^{i}, \blambda_{n-1}^{i}, \mu_n^N) \|_{\rm BL} 
+ \| \R(\X_{n-1}^{i}, \blambda_{n-1}^{i}, \mu_n^N) - \R(\X_{n-1}^{i}, \blambda_{n-1}^{i}, \mu_{n-1}^N) \|_{\rm BL} \\
&\le L_{\R,r} \| \delta \by_n^{i}(s) \|_{\cS} + L_{\R,r} \W_1(\mu_n^N(s), \mu_{n-1}^N(s)) \\
&\le 2 L_{\R,r} \| \delta \by_n(s) \|_{\cS^N}.
\end{align*}
On the other hand, notice that

\begin{align}\label{itobound1}
\E\left\|\displaystyle\int_0^t(\Pi_{\blambda_n}-\Pi_{\blambda_{n-1}})\de W_s^{\bQ}\right\|_{\BL}^2
\le C_{Q}
\E\int_0^t\|\delta\blambda_n(s)\|_{\BL}^2\de s.   
\end{align} 
Therefore,

\begin{align}
&\E\|\delta\by_{n+1}(t)\|_{\cS^{N}}^2=\E\left(\frac{1}{N}\sum_{i=1}^{N}\|\delta\by_{n+1}^{i}(t)\|_{\cS}\right)^{2}\\
&\phantom{formu}=E\left(\frac{1}{N}\sum_{i=1}^{N}\|\delta\X_{n+1}^{i}(t)\|_{2}+ \|\delta\by_{n+1}^{i}(t)\|_{\BL}\right)^{2}\\
&\phantom{formu}\leq \E\left(2C_{H,r}\displaystyle\int_0^t\|\delta\by_n(s)\|_{\cS^N}ds + 2 L_{\R,r} \| \delta \by_n(s) \|_{\cS^N}+ \left\|\displaystyle\int_0^t(\Pi_{\blambda_n}-\Pi_{\blambda_{n-1}})\de W_s^{\bQ}\right\|_{\BL}\right)^{2}\\
&\phantom{formu}\leq 6t(C_{H,r}^{2}+L_{\R,r}^{2}+ C_{Q}^{2})\displaystyle\int_0^t\|\delta\by_n(s)\|_{\cS^N}^{2}\de s
\end{align}
where in the last inequality we have used $(a+b+c)^{2}\leq 3a^{2}+3b^{2}+3c^{2}$, and Jensen inequality. Let us define $\rho\coloneqq 6T(C_{H,r}^{2}+L_{\R,r}^{2}+ C_{Q}^{2})$. Notice that we have obtained that 

\begin{align}
\int_0^T e^{-\alpha t} \E \|\delta \by_{n+1}(t)\|_{\cS^N}^2 \de t 
&\le \rho \int_0^T e^{-\alpha t} \int_0^t \E \|\delta \by_n(s)\|_{\cS^N}^2 \de s \de t \\
&= \rho \int_0^T \E \|\delta \by_n(s)\|_{\cS^N}^2 \int_s^T e^{-\alpha t} \de t \de s \\
&\le \frac{\rho}{\alpha} \int_0^T e^{-\alpha s} \E \|\delta \by_n(s)\|_{\cS^N}^2 \de s \\
&= \frac{\rho}{\alpha} \norma{\delta \by_n}_{T,\alpha,\cS^{N}}^2.
\end{align}
Then by iterating the previous argument we have that 
\begin{align}
\|\delta\by_n\|_{T,\alpha,\cS^{N}}^2\le\left(\frac{\rho}{\alpha}\right)^n\|\delta\by_0\|_{T,\alpha,\cS^{N}}^2.   
\end{align}
Notice that this inequality can be written as 
\begin{align}
\|\delta\by_n\|_{t,\alpha,\cS^{N}}^2\le\left(\frac{\rho}{\alpha}\right)^n\|\delta\by_0\|_{t,\alpha,\cS^{N}}^2.   
\end{align}
for all $0<t\leq T$. Let us now define
\begin{align}
A_n\coloneqq \Big\{ \sup_{t \in [0,T]} \|\delta \by_n\|_{t,\alpha,\cS^{N}} > \varepsilon_n \Big\}, 
\qquad 
\varepsilon_n \coloneqq \Big(\frac{\rho}{\alpha}\Big)^{n/4}.
\end{align}

\noindent
By Markov's inequality
\begin{align}
\P(A_n) 
&= \mathbb{P}\Big( \sup_{t \in [0,T]} \|\delta \by_n\|_{t,\alpha,\cS^{N}} > \varepsilon_n \Big) \\
&\le \frac{\E\Big[\sup_{t\in[0,T]} \|\delta \by_n\|_{t,\alpha,\cS^{N}}^2\Big]}{\varepsilon_n^2} \\
&\leq \frac{\|\delta \by_n\|_{T,\alpha,\cS^{N}}^2}{\varepsilon_n^2} \\
&\le \frac{(\rho/\alpha)^n \|\delta \by_0\|_{T,\alpha,\cS^{N}}^2}{(\rho/\alpha)^{n/2}} \\
&= (\rho/\alpha)^{n/2} \, \|\delta \by_0\|_{T,\alpha,\cS^{N}}^2.
\end{align}
In what follows, we can choose $\alpha>0$ such that $0<\frac{\rho}{\alpha}<1$ in such a way that

\begin{align}
\sum_{n=1}^{\infty} \mathbb{P}(A_n) \le \|\delta \by_0\|_{T,\alpha,\cS^{N}}^2 \sum_{n=1}^{\infty} (\rho/\alpha)^{n/2} < \infty,.  
\end{align}
Thus by Borel-Cantelli, 

 \begin{align}
\mathbb{P}\Big( \limsup_{n\to\infty} A_n \Big) = 0.     
 \end{align}
Hence

\begin{align}
\sup_{t \in [0,T]} \|\delta \by_n\|_{t,\alpha,\cS^{N}} \rightarrow 0 \hskip 0,2cm \text{a.s..} 
\end{align}
Let $\Delta=\by-\tilde\by$ such that $\by(0)=\tilde\by(0)$. Notice that by the previous argument, we have that

\begin{align}
\E\|\Delta(t)\|_{\cS^{N}}^2\le \rho\int_0^t\E\|\Delta(s)\|_{\cS^{N}}^2 \de s.   
\end{align}
Then by Gr\hol{o}nwall inequality
\begin{align}
 \E\|\Delta(t)\|_{\cS^{N}}^2\leq \E\|\Delta(0)\|_{\cS^{N}}^2 e^{\rho t}.   
\end{align}
Since $\Delta(0)=0$, we are done. Notice that by \eqref{boundwass}, we have that $\{\mu_{n}^{N}(t)\}_{n\in\N}$ is a Cauchy sequence. Since by \cite[Proposition 2.2.8]{panaretos2020} $(\cP_{1}(\cS^{N}),\W_{1})$ is complete, then $\mu_{n}^{N}(t)$ converges weakly to $\mu^{N}(t)$. Hence, we can pass to the limit in \eqref{Picard} to obtain that existence and uniqueness of a solution to \eqref{system2} in $\cS^{N}$. Since we need to find a solution in $\hat{\Y}^{N}$, we need to check that $\blambda\in (\cP(U))^{N}$. Indeed, Since $\{\blambda_{n}(t)\}_{n\in\N}$ is a Cauchy sequence with respect to the ${\rm BL}$-norm $\norma{\blambda_{n}(t)}_{\rm BL}=\frac{1}{N}\sum_{i=1}^{N}\norma{\lambda_{n}^{i}}_{\rm BL}$, by applying \eqref{disutile}, for any $\varepsilon>0$ there exists $N\in \N$ such that 

\begin{align*}
 \W_{1}(\blambda_{n}(t),\blambda_{m}(t))\leq (1+D_{U})\norma{\blambda_{n}(t)-\blambda_{m}(t)}_{\rm BL}\leq (1+D_{U})\varepsilon 
\end{align*}
for any $n,m\geq N$ positive integers, and all $t\in [0,T]$. On the other hand, for some $\overline{x}\in U$,

\begin{align*}
m_{1}(\blambda_{n}(t))=\frac{1}{N}\sum_{i=1}^{N}\int_{U}\de_{U} (x,\overline{x})\lambda_{n}^{i}(\de x)&\leq \sup_{x\in U}\de_{U} (x,\overline{x})\\
&\leq {\rm diam}(U)<+\infty,
\end{align*}
where ${\rm diam}(U)$ is the diameter of $U$. Then, we conclude that $\{\blambda_{n}(t)\}_{n\in\N}\}$ is a Cauchy sequence with respect to the Wasserstein-1 distance. Again by \cite[Proposition 2.2.8]{panaretos2020}, $((\cP_{1}(U))^{N},\W_{1})$ is a complete metric space, then $\blambda(t)\in (\cP(U))^{N}$, and we are done.
\end{proof}
In what follows, we prove \autoref{lem:conv_muN_detailed}.
\begin{proof}[Proof of \autoref{lem:conv_muN_detailed}]
Let us first define the path space of continuous trajectories
\begin{align}
\cY \coloneqq C([0,T]; \cS) = C([0,T]; \Tt^d \times \F(U)).    
\end{align}
We endow $\cY$ with the uniform norm

\begin{align}
 \|\by\|_{\cY} \coloneqq \sup_{t \in [0,T]} \| (X_t, \lambda_t) \|_{\cS} = \sup_{t \in [0,T]} \big( \|X_t\|_2 + \|\lambda_t\|_{\rm BL} \big)   
\end{align}
and we consider the space of probability measures $\cP_1(\cY)$ endowed with the Wasserstein-1 distance

\begin{align}
\W_1(\nu_1, \nu_2) \coloneqq \inf_{\pi \in \Pi(\nu_1, \nu_2)}\displaystyle\int_{\cY \times \cY} \|\by - \tilde \by\|_{\cY} \, \pi(\de\by, \de\tilde \by)    
\end{align}
where $\Pi(\nu_1, \nu_2)$ denotes the set of couplings of $\nu_1$ and $\nu_2$. We now for each $i=1,\dots,N$, let $(Y^i(t), \Lambda^i(t))$ solve the nonlinear McKean--Vlasov SDE
\begin{align}\label{eq:limitMV_detailed_repeat}
\begin{cases}
\de Y^i(t) = H(Y^i(t), \Lambda^i(t), \mu(t))\, \de t + \sqrt{2\sigma_x}\, \de B^i_t,\\
\de\Lambda^i(t) = \R(Y^i(t), \Lambda^i(t), \mu(t))\, \de t + \sqrt{2\sigma_\lambda} \, \Pi_{\Lambda^i(t)}\, \de W^{Q,i}(t),
\end{cases}
\end{align}
with initial condition $(Y^i_0, \Lambda^i_0) = (X^i_0, \lambda^i_0)$ and independent Brownian motion $(B^i)$, and independent noise $(W^{Q,i})$ which are pairwise independent.  Denote their common law by
\begin{align}
\mu = \mathcal{L}((Y^i(t), \Lambda^i(t))_{t \in [0,T]}),
\end{align}
let us prove that $\mu\in \cP_1(\cY)$, or equivalently
\begin{align}
\mu_\cdot \in C([0,T];\cP_1(\Y)).
\end{align}
Let $(Y,\Lambda)$ solve \eqref{eq:limitMV_detailed_repeat}. For $0\le s\le t\le T$ we have
\begin{align}
Y(t)-Y(s) &= \int_s^t H(Y(r),\Lambda(r),\mu(r))\,\de r + \sqrt{2\sigma_x}\,(B(t)-B(s)),\\
\Lambda(t)-\Lambda(s) &= \int_s^t \R(Y(r),\Lambda(r),\mu(r))\,\de r + \sqrt{2\sigma_\lambda}\int_s^t \Pi_{\Lambda(r)}\,\de W^Q(r).
\end{align}
With a similar argument as the one used to estimate \eqref{useddopo}, we can that defined

\begin{align}
\hat{M}_N(t)\coloneqq \frac{1}{N}\sum_{i=1}^{N}\left(\E\|Y^i(t)\|_{2}^2 + \E\norma{\Lambda^i(t)}_{\BL}^{2}\right), 
\end{align}
we get
\begin{align}\label{usedopo2}
\sup_{t\in [0,T]}\hat{M}_{N}(t)\leq \left(\frac{3}{N}\sum_{i=1}^{N}\left(\E\|Y_0^i\|_{2}^2+\E\norma{\Lambda_0^i}_{\BL}^{2}\right)+3C\right)e^{6CT}   
\end{align}
for some $C>0$ independent of $N$. Then there exists a radius $r'>0$ such that $\mu(t)\in \cP_{1}(B_{r'}^{\Y})$. Using Lipschitz continuity of $H$ and $\R$ and the triangle inequality, we get
\begin{align}
\|Y(t)-Y(s)\|_2 &\le \int_s^t \|H(Y(r),\Lambda(r),\mu(r)) - H(Y(s),\Lambda(s),\mu(s))\|_2 \, \de r + \int_s^t \|H(Y(s),\Lambda(s),\mu(s))\|_2 \, \de r,\\
\|\Lambda(t)-\Lambda(s)\|_{\rm BL} &\le \int_s^t \|\R(Y(r),\Lambda(r),\mu(r)) - \R(Y(s),\Lambda(s),\mu(s))\|_{\rm BL} \, \de r + \int_s^t \|\R(Y(s),\Lambda(s),\mu(s))\|_{\rm BL} \, \de r.
\end{align}
Applying the Lipschitz property (with constants $C_{H,r'}, L_{\R,r'}$), we have
\begin{align}
\|H(Y(r),\Lambda(r),\mu(r)) - H(Y(s),\Lambda(s),\mu(s))\|_2 &\le C_{H,r'} \big(\|Y(r)-Y(s)\|_2 + \|\Lambda(r)-\Lambda(s)\|_{\rm BL} + \W_1(\mu(r),\mu(s)) \big),\\
\|\R(Y(r),\Lambda(r),\mu(r)) - \R(Y(s),\Lambda(s),\mu(s))\|_{\rm BL} &\le L_{\R,r'} \big(\|Y(r)-Y(s)\|_2 + \|\Lambda(r)-\Lambda(s)\|_{\rm BL} + \W_1(\mu(r),\mu(s)) \big).
\end{align}
Further, we have 
\begin{align}
\int_s^t \|\R(Y(s),\Lambda(s),\mu(s))\|_{\rm BL} \, \de r \leq M_{H}(t-s)\left(1+\norma{(Y(s),\Lambda(s))}_{\cS}+ m_{1}(\mu(s))\right). 
\end{align} 

For the stochastic integral, we have
\begin{align}
\E \Big\| \int_s^t \Pi_{\Lambda(r)}\,\de W^Q(r) \Big\|_{\rm BL}^2 \le C_Q \int_s^t \E \|\Pi_{\Lambda(r)}\|_{\rm BL}^2 \, \de r \le 2C_Q \int_{s}^{t}(1+\E\norma{\Lambda(r)}_{\BL}^{2})\de r.
\end{align}
Define
\begin{align}
\Delta(s,t) := \E \big[\|Y(t)-Y(s)\|_2 + \|\Lambda(t)-\Lambda(s)\|_{\rm BL} \big].    
\end{align}
Then we get
\begin{align}
\Delta(s,t) &\le  C_{H,r'}\int_s^t \Delta(s,r) \, \de r + L_{\R,r'}\int_s^t  \Delta(s,r) \, \de r + \sqrt{2\sigma_x d (t-s)} + \\
&+\sqrt{4\sigma_\lambda C_Q (t-s)}\left(\int_{s}^{t}(1+\E\norma{\Lambda(r)}_{\BL}^{2})\de r\right)^{\frac{1}{2}}+2\int_{s}^{t}\W_{1}(\mu(r),\mu(s))\de r.
\end{align}
Hence, we have obtained that

\begin{align}
\Delta(s,t) &\le C_{H,r'} \int_s^t \Delta(s,r) \, \de r + L_{R,r'}\int_s^t  \Delta(s,r) \, \de r + \sqrt{2\sigma_x d (t-s)} + \\
&+C_{T}'(t-s)+2\int_{s}^{t}\Delta(s,r)\de r,
\end{align}
where $C_{T}'>0$ is a positive constant depending on $C_{Q},\sigma_{\lambda}, C$, and the initial conditions but not on $N$. Applying Grönwall’s inequality (for the integral term), we conclude
\begin{align}
\Delta(s,t) \le \big(\sqrt{2\sigma_x d (t-s)} + C_{T}' (t-s)\big) \exp\big((2+C_{H,r'}+L_{R,r'})(t-s)\big) \end{align}
Therefore, we have
\begin{align}
\W_1(\mu(t),\mu(s)) \le \Delta(s,t)\to 0 \quad \text{as } |t-s| \to 0.
\end{align}
Hence $t\mapsto \mu(t)$ is continuous in $\W_1$. Furthermore, let us set, 

\begin{align}
\overline{\mu}^{N}=\frac{1}{N}\sum_{i=1}^{N}\delta_{(Y^i(t), \Lambda^i(t))}.
\end{align}
and let us define
\begin{align}
\gamma^N = \frac{1}{N} \sum_{i=1}^N \delta_{((X^i, \lambda^i), (Y^i, \Lambda^i))} \in \cP_1(\cY \times \cY).
\end{align}
Similarly to the $\W_{1}$, we consider the $\W_{2}$-distance as 

\begin{align}
\W_2(\nu_1, \nu_2) \coloneqq \inf_{\pi \in \Pi(\nu_1, \nu_2)}\displaystyle\int_{\cY \times \cY} \|\by - \tilde \by\|_{\cY}^{2} \, \pi(\de\by, \de\tilde \by),    
\end{align}
so that, by the very definition
\begin{align}\label{eq:W2_coupling_detailed}
\begin{aligned}
\W_{2}(\mu^N, \overline{\mu}^{N}) &\le \int_{\cY \times \cY} \|\by - \tilde \by\|_\cY^{2} \, \gamma^N(d\by, d\tilde\by)\\
&= \frac{1}{N} \sum_{i=1}^N \sup_{t\in[0,T]} \|(X^i(t), \lambda^i(t)) - (Y^i(t), \Lambda^i(t))\|_\cS^{2}.\\
&\leq\frac{2}{N} \sum_{i=1}^N \sup_{t\in[0,T]}\left(\norma{X^{i}(t)-Y^{i}(t)}_{2}^{2}+\norma{\lambda^{i}(t)-\Lambda^{i}(t)}_{\BL}^{2}\right).
\end{aligned}
\end{align}
For each $i$ and $t \in [0,T]$, define the difference
\begin{align}
\delta X^i(t) = X^i(t) - Y^i(t), \quad \delta \lambda^i(t) = \lambda^i(t) - \Lambda^i(t).    
\end{align}
Then, from \eqref{system2} and \eqref{eq:limitMV_detailed_repeat}, we have
\begin{align*}
\delta X^i(t) &= \displaystyle\int_0^t \big(H(X^i(s), \lambda^i(s), \mu^N(s)) - H(Y^i(s), \Lambda^i(s), \mu(s))\big) \de s,\\
\delta \lambda^i(t) &= \displaystyle\int_0^t \big(\R(X^i(s), \lambda^i(s), \mu^N(s)) - \R(Y^i(s), \Lambda^i(s), \mu(s))\big) \de s
+ \sqrt{2\sigma_\lambda} \displaystyle\int_0^t (\Pi_{\lambda^i(s)} - \Pi_{\Lambda^i(s)}) \, \de W^{Q,i}(s).
\end{align*}
By assumptions \autoref{A1}-\autoref{A2}, \autoref{A5},
\begin{align}
\|H(X^i(s), \lambda^i(s), \mu^N(s)) - H(Y^i(s), \Lambda^i(s), \mu(s))\|_2 &\le C_{H,r}\big( \|\delta X^i(s)\|_2 + \|\delta \lambda^i(s)\|_{\BL}+ \W_{1}(\mu^{N}(s),\mu(s))\big),\\
\|\R(X^i(s), \lambda^i(s), \mu^N(s)) - \R(Y^i(s), \Lambda^i(s), \mu(s))\|_{\BL} &\le L_{\R,r} \big( \|\delta X^i(s)\|_2 + \|\delta \lambda^i(s)\|_{\BL}+\W_{1}(\mu^{N}(s),\mu(s))\big).
\end{align}
Moreover, recall that by \autoref{A8} and \autoref{prop:R2}, we have \eqref{itobound1}, that is,
\begin{align}
\E \Big\| \displaystyle\int_0^t (\Pi_{\lambda^i(s)} - \Pi_{\Lambda^i(s)}) \, \de W^{Q,i}(s) \Big\|_{\rm BL}^2 \le C_Q \displaystyle\int_0^t \E \|\delta \lambda^i(s)\|_{\rm BL}^2 \, \de s.   
\end{align}
Let us define
\begin{align}
\Delta^i(t)\coloneqq\E \sup_{t\in[0,T]}\left(\norma{X^{i}(t)-Y^{i}(t)}_{2}^{2}+\norma{\lambda^{i}(t)-\Lambda^{i}(t)}_{\BL}^{2}\right).   
\end{align}
Notice that
\begin{align}
\frac{1}{N} \sum_{i=1}^N \Delta^i(t) \le  (3C_{H,r}^{2}T+3L_{\R,r}^{2}T+ 2\sigma_{\lambda}C_{Q})\left(\displaystyle\int_0^t\frac{1}{N} \sum_{i=1}^N \Delta^i(s)\, \de s+ \int_{0}^{T} \E(\W_{1}(\mu^{N}(s),\mu(s)))^{2}\de s \right).  
\end{align}
Let us set $C_{T}\coloneqq 4C_{H,r}^{2}T+4L_{\R,r}^{2}T+ 2\sigma_{\lambda}C_{Q}$. We have obtained that

\begin{align}
\frac{1}{N} \sum_{i=1}^N \Delta^i(t) \le C_{T}\left(\displaystyle\int_0^t\frac{1}{N} \sum_{i=1}^N \Delta^i(s)\, \de s+ \int_{0}^{T} \E(\W_{1}(\mu^{N}(s),\mu(s)))^{2}\de s \right).
\end{align}
This inequality can be improved for $s\in [0,t]$ as

\begin{align}
\frac{1}{N} \sum_{i=1}^N \Delta^i(s) \le C_{T}\left(\displaystyle\int_0^s\frac{1}{N} \sum_{i=1}^N \Delta^i(r)\, \de r+ \int_{0}^{t} \E(\W_{1}(\mu^{N}(r),\mu(r)))^{2}\de r \right).
\end{align}
Then by Gr\hol{o}nwall inequality, we get

\begin{align}
\frac{1}{N} \sum_{i=1}^N \Delta^i(s) \leq C_{T}e^{C_{T}s}\int_{0}^{t} \E(\W_{1}(\mu^{N}(r),\mu(r)))^{2}\de r.   
\end{align}
Hence

\begin{align}
    \W_{2}^{2}(\mu^{N}(t),\overline{\mu}^{N}(t))\leq \frac{2}{N} \sum_{i=1}^N \Delta^i(s) \leq C_{T}e^{C_{T}T}\int_{0}^{t} \E(\W_{1}(\mu^{N}(r),\mu(r)))^{2}\de r,
\end{align}
and since $\W_{1}\leq \W_{2}$, we have

\begin{align}
\W_{1}^{2}(\mu^{N}(t),\mu(t))&\leq 2\W_{2}^{2}(\mu^{N}(t),\overline{\mu}^{N}(t)) + 2\W_{2}^{2}(\overline{\mu}^{N},\mu)\\
&\leq 2C_{T}e^{C_{T}T}\int_{0}^{t} \E(\W_{1}(\mu^{N}(r),\mu(r)))^{2}\de r +2\W_{2}^{2}(\overline{\mu}^{N},\mu),
\end{align}
and from which

\begin{align}
\E\W_{1}^{2}(\mu^{N}(t),\mu(t))\leq 2\exp(2C_{T}e^{C_{T}T}T)\E\W_{2}^{2}(\overline{\mu}^{N},\mu). 
\end{align}
Notice that by applying the same reasoning as in \eqref{useddopo}, we obtain that

\begin{align}
  m_{2}(\overline{\mu}^{N})<C_{T}  
\end{align}
where $C_{T}$ is a positive constant independent of $N$, and the bound holds true for $\mu$. Furthermore, since $\overline{\mu}^{N}$ is composed by i.i.d random measures, and $\overline{\mu}^{N}\rightarrow \mu$ weakly with $m_{2}(\overline{\mu}^{N})\rightarrow m_{2}(\mu)$, then by \cite[Theorem 2.2.1]{panaretos2020} we have that 

\begin{align}
  \W_{2}(\overline{\mu}^{N},\mu)\rightarrow 0 \hskip 0,5cm \P-a.s..  
\end{align}
Now, notice that 

\begin{align}
 \E\W_{2}^{2}(\overline{\mu}^{N},\mu)\leq 2m_{2}(\overline{\mu}^{N})+ 2m_{2}(\mu)<+\infty   
\end{align}
uniformly in $N$. So that dominate convergence, we have 

\begin{align}
 \lim_{N\rightarrow+\infty}\E\W_{2}^{2}(\overline{\mu}^{N},\mu)=0   
\end{align}
and thus

\begin{align}
\lim_{N\rightarrow +\infty}\sup_{t \in [0,T]} \E \big[\W_1(\mu^N(t), \mu(t)) \big] =0.
\end{align}
From here the system \eqref{system2} is chaotic with limit law $\mu_{\cdot}$ in the sense of \autoref{defn:propchaos}.
\end{proof}
We now prove \autoref{thm:exponentialconvergence}.
\begin{proof}[Proof of \autoref{thm:exponentialconvergence}]
Let $\mu(t),\nu(t)$ be two solutions and consider a synchronous coupling 
$(Y^1(t),\Lambda^1(t))$, $(Y^2(t),\Lambda^2(t))$ driven by the same noises:
\begin{align*}
&\begin{cases}
\de Y^1(t) = H(Y^1(t), \Lambda^1(t), \mu(t))\, \de t + \sqrt{2\sigma_x}\, \de B(t),\\
\de\Lambda^1(t) = \R(Y^1(t), \Lambda^1(t), \mu(t))\,\de t + \sqrt{2\sigma_\lambda}\, \Pi_{\Lambda^1(t)}\, \de W^{Q}_t,
\end{cases} 
\\
\\
&\begin{cases}
dY^2(t) = H(Y^2(t), \Lambda^2(t), \nu_t)\, \de t + \sqrt{2\sigma_x}\, \de B(t),\\
d\Lambda^2(t) = \R(Y^2(t), \Lambda^2(t), \nu(t))\, \de t + \sqrt{2\sigma_\lambda}\, \Pi_{\Lambda^2(t)}\, \de W^{Q}(t).
\end{cases}
\end{align*}
Now consider the difference
\begin{align}
&\Delta Y(t) = Y^1(t) - Y^2(t), \hskip 0,2cm \Delta \Lambda(t) = \Lambda^1(t) - \Lambda^2(t),\\
&\Delta H(t)= H(Y^1(t), \Lambda^1(t), \mu(t))-H(Y^2(t), \Lambda^2(t), \nu_t)\\
&\Delta \R(t)=\R(Y^1(t), \Lambda^1(t), \mu(t))-\R(Y^2(t), \Lambda^2(t), \nu(t)).
\end{align}
and define
\begin{align}
\Phi(t)=\E\!\left[\|\Delta Y(t)\|_{2}^2+\|\Delta\Lambda(t)\|_{\BL}^2\right].    
\end{align}
Notice that

\begin{align}
&\de\Delta Y(t)=\Delta H(t)\de t\\
&\de\Delta\Lambda(t)=\Delta\R(t)\,\de t+\sqrt{2\sigma_\lambda}(\Pi_{\Lambda^1(t)}-\Pi_{\Lambda^2(t)})\de W^Q(t). 
\end{align}
Now we obtain that
\begin{align}
\frac{\de}{\de t}\Phi(t)
&=2\E\nm{\Delta Y(t),\Delta H(t)}+2\E\nm{\Delta\Lambda(t),\Delta\R(t)}\nonumber\\
&\quad+2\sigma_\lambda C_{Q}\E\|\Lambda^1(t)-\Lambda^2(t)
\|_{\BL}^2.
\end{align}
Decompose

\begin{align}
\Delta H(t)=[H(Y^{1}(t),\Lambda^{1}(t),\mu(t))-H(Y^{2}(t),\Lambda^{2}(t),\mu(t))]+[H(Y^{2}(t),\Lambda^{2}(t),\mu(t))-H(Y^{2}(t),\Lambda^{2}(t),\nu(t))].    
\end{align}
Using uniform convexity \autoref{A10}--\autoref{A12}

\begin{align}
\E\nm{\Delta Y(t),\Delta H(t)}&\le- m_x\E\|\Delta Y(t)\|_{2}^2
+L_\mu \W_{1}(\mu(t),\nu(t))\sqrt{\E\|\Delta Y(t)\|_{2}^2}\\
&\leq - m_x\E\|\Delta Y(t)\|_{2}^2+ \frac{L_{\mu}}{2}\W_{1}^{2}(\mu(t),\nu(t))+ \frac{L_{\mu}}{2}\E\|\Delta Y(t)\|_{2}^2
\end{align}
Similarly,

\begin{align}
\E\nm{\Delta\Lambda(t),\Delta\R(t)}&\le- m_\lambda\E\|\Delta\Lambda(t)\|_{\BL}^2
+L_\mu W_{1}(\mu(t),\nu(t))\sqrt{\E\|\Delta\Lambda(t)\|_{\BL}^2}\\
&- m_\lambda\E\|\Delta\Lambda(t)\|_{\BL}^2+\frac{L_{\mu}}{2}\W_{1}^{2}(\mu(t),\nu(t))+ \frac{L_{\mu}}{2}\E\|\Delta\Lambda(t)\|_{\BL}^2.
\end{align}
Since $\W_{1}\leq \W_{2}$, and $W_2^2(\mu_t,\nu_t)\le\Phi(t)$, we then have

\begin{align}
\E\langle \Delta Y_t,\Delta H_t\rangle
+
\E\langle \Delta\Lambda_t,\Delta\R_t\rangle
\le
-(m_x+m_\lambda-2L_\mu)\Phi(t).    
\end{align}
Therefore
\begin{align}
\frac{\de}{\de t}\Phi(t)
\le-2(m_x+m_\lambda-2L_\mu)\Phi(t)+2\sigma_\lambda C_Q \Phi(t)\eqqcolon-2\kappa\Phi(t).   
\end{align}
By Grönwall,
\begin{align}
\Phi(t)\le e^{-2\kappa t}\Phi(0).
\end{align}
Since $\Phi(0)=\W_2^2(\mu_0,\nu_0)$ and $\Phi(t)\ge \W_2^2(\mu(t),\nu(t))$,
\begin{align}\label{imp}
\W_2(\mu(t),\nu(t))
\le
e^{-\kappa t}\W_2(\mu_0,\nu_0).    
\end{align}
Let us consider
\begin{align}\label{mapphibar}
\hat{\Phi} : C([0,T];\cP_2(\Y)) \longrightarrow C([0,T];\cP_2(\Y))
\end{align}
by
\begin{align}
(\hat{\Phi}(\mu_\cdot))(t) \coloneqq {\rm law}(Y(t),\Lambda(t)),
\quad t\in[0,T].
\end{align}
For $t>s$, and the uniqueness of solution to the SDEs the semigroup property holds:
\begin{align}
\mu(t)=(\hat{\Phi}\mu(s))(t-s).    
\end{align}
Notice that the evolution between $\mu_s$ and $\mu_0$, can be estimated as

\begin{align}
\W_2(\mu(t),\mu(s))=\W_2((\hat{\Phi}\mu(s))(t-s),(\hat{\Phi}\mu_0)(t-s))\le e^{-\kappa(t-s)}\W_2(\mu(s),\mu_0).  
\end{align}
By letting $s$ fixed, we have $m_{2}((Y(r),\Lambda(r)))<+\infty$ for all $r\in [0,s]$, and 

\begin{align}
\limsup_{t\to\infty}W_2(\mu(t),\mu(s))=0.   
\end{align}
Letting $s\to\infty$ yields

\begin{align}
\lim_{t,s\to\infty}\W_2(\mu(t),\mu(s))=0.    
\end{align}
Thus $(\mu_t)$ is Cauchy in $C([0,\infty),(\cP_2(\Y),W_2))$. Since $(\cP_2(\Y),W_2)$ is complete,
there exists $\mu_\infty$ such that
\begin{align}
\lim_{t\rightarrow +\infty}\W_{2}(\mu(t),\mu_{\infty})=0.   
\end{align}
Notice that for any $h>0$, and any test function $\phi$

\begin{align}
\nm{(\hat{\Phi}\mu_\infty)(h),\phi}=\lim_{t\to\infty}\nm{(\hat{\Phi}\mu(t))(h),\phi}=\lim_{t\to\infty}\nm{\mu(t+h),\phi}=\nm{\mu_\infty,\phi},   
\end{align}
so $\mu_{\infty}$ is invariant. Let us set $\nu(t)=(\hat{\Phi}\mu_{\infty})(t)$, and $\mu(t)=(\hat{\Phi}\mu_{0})(t)$. Then by \eqref{imp}, and the invariance of $\mu_{\infty}$, we have

\begin{align}
\W_2(\mu(t),\mu_{\infty})
\le
e^{-\kappa t}\W_2(\mu_0,\mu_{\infty}),    
\end{align}
and we are done.
\end{proof}
\section{Future Directions}
The present work opens several natural directions for further investigation. In this paper, the long-time and mean-field analysis is carried out under structural assumptions on the covariance operator $Q$ ensuring well-posedness of the projected stochastic integral. 
A natural and challenging extension concerns the study of the associated nonlinear continuity (or Fokker--Planck) equation satisfied by the law $\mu(t)$ when the operator $Q$ is arbitrary and not necessarily of finite trace. In this more general setting, the measure-valued component is driven by a cylindrical noise that may generate genuinely infinite-dimensional diffusion effects.  Understanding the well-posedness of the corresponding continuity equation requires a careful analysis of the induced second-order operator in the space of probability measures, as well as suitable coercivity or dissipativity properties. 
In particular, it would be of interest to establish existence, uniqueness, and stability results for the limiting equation without compactness or trace-class assumptions on $Q$. Another natural question concerns the intrinsic well-posedness of the nonlinear evolution equation satisfied by $\mu(t)$ in the absence of strong structural assumptions on the drift fields. This includes identifying minimal conditions ensuring global existence, preservation of positivity, and stability in Wasserstein spaces. 
A variational or gradient-flow formulation in the space of probability measures could provide a promising framework for such an analysis. A further direction consists in replacing the Gaussian cylindrical perturbation with a Lévy-type noise. In this case, the measure-valued component would evolve according to a jump-driven stochastic dynamics projected onto the tangent space of $\cP(U)$. Such a setting would lead to a nonlinear nonlocal equation at the mean-field level, combining transport, diffusion, and jump terms. Establishing well-posedness, propagation of chaos, and long-time behavior in this framework represents a substantial analytical challenge and would significantly broaden the applicability of the present model.
\appendix
\section{Random fields indexed by the parameter space}\label{sec:frechet}
We recall the basic notions of differentiability in Banach spaces which are used throughout the paper. Let $E,F$ be Banach spaces and let $\Phi:E\to F$ be a map.
\begin{defn}[Fr\'echet derivative]
We say that $\Phi$ is \emph{Fr\'echet differentiable} at a point $x\in E$ if there exists a bounded linear operator $D\Phi(x):E\to F$ such that

\begin{align}
\lim_{\norma{h}_{E}\to0}\frac{\norma{\Phi(x+h)-\Phi(x)-D\Phi(x)h}_{F}}{\norma{h}_{E}}=0.    
\end{align}
The operator $D\Phi(x)$ is called the \emph{Fr\'echet derivative} of $\Phi$ at $x$.
\end{defn}
In the next, we recall what a Gaussian measure is on a generic Banach space $E$.

\begin{defn}
Let $E$ be a generic Banach space, and denote by $\mathscr{B}(E)$ the Borel $\sigma$-field of $E$. A probability measure $\gamma$ on $(E,\mathscr{B}(E))$ is said to be Gaussian if $\gamma \circ f^{-1}$ is Gaussian measure in $\R$ for every $f\in E^{\ast}$ (the topological dual space of $E$ of linear and continuous functionals $f:E\rightarrow \R$). The measure is called centered (or symmetric) if all the measures $\gamma\circ f^{-1}$ are centered.
\end{defn}
\begin{rem}
Notice that $f\in E^{\ast}$ then $f\in L^{p}(X,\gamma)$ for all $p\geq 1$. In fact, we have that
\begin{align}
\displaystyle\int_{E}\vert f(x) \vert^{p}\gamma(\de x)=\displaystyle\int_{\R}\vert t\vert^{p}(\gamma\circ f^{-1})(\de t)   
\end{align}
where the right-hand side of the previous equality is finite due to $\gamma\circ f^{-1}$ is a Gaussian measure in $\R$.
\end{rem}
In what follows, we give a suitable characterization of Gaussian measures.

\begin{defn}
Let $\gamma$ be a Gaussian measure on $(E,\mathscr{B}(E))$. We define the mean $a_{\gamma}$, and the covariance $B_{\gamma}$ of $\gamma$ by
\begin{align}
&a_{\gamma}(f)\coloneqq \displaystyle\int_{E}f(x)\gamma(\de x), \hskip 0,1cm f\in E^{\ast},\\
& B_{\gamma}(f,g)\coloneqq \displaystyle\int_{E}\left[ f(x)-a_{\gamma}(f)\right]\left[g(x)-a_{\gamma}(g)\right]\gamma(\de x), \hskip 0,1cm f,g\in E^{\ast}.
\end{align}
\end{defn}
Notice that $f\mapsto a_{\gamma}(f)$ is linear and $(f,g)\mapsto B_{\gamma}(f,g)$ is bilinear in $E^{\ast}$. Furthermore, 
\begin{align}
  B_{\gamma}(f,f)=\norma{f-a_{\gamma}(f)}_{L^{2}(E,\gamma)}^{2}\geq 0,\hskip 0,2cm f\in E^{\ast}.
\end{align}
In what follows, we denote by $\hat{\gamma}$ the characteristic function of $\gamma$ which is defined as

\begin{align}
 \hat{\gamma}(f)\coloneqq \displaystyle\int_{E}\exp\{if(x)\}\gamma(\de x), \hskip 0,2cm f\in E^{\ast}. 
\end{align}

\begin{thm}\label{characterization:gaussians}
Let $\gamma$ be a Gaussian measure on $(E,\mathscr{B}(E))$. Then
\begin{align}
\hat{\gamma}(f)=\exp\left(ia_{\gamma}(f)-\frac{1}{2}B_{\gamma}(f,f)\right),\hskip 0,2cm f\in E^{\ast}.  
\end{align}
Therefore the law of $f$ in $\Tt$ is given by

\begin{align}
   \gamma\circ f^{-1}=\mathscr{N}(a_{\gamma}(f),B_{\gamma}(f,f)).
\end{align}
Conversely, if a Borel probability measure $\gamma$ is such that
\begin{align}
\hat{\gamma}(f)=\exp\left(ia(f)-\frac{1}{2}B(f,f)\right),\hskip 0,2cm f\in E^{\ast}  
\end{align}
for some linear operator $a: E^{\ast}\rightarrow \Tt$, and some bilinear symmetric nonnegative operator $B: E^{\ast}\times E^{\ast}\rightarrow \Tt$, then $\gamma$ is a Gaussian measure on $(E,\mathscr{B}(E))$ with mean $a$, and covariance operator $B$.
\end{thm}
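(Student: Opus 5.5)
The plan is to reduce everything to one-dimensional Gaussian laws through the image measures $\gamma\circ f^{-1}$, $f\in E^{\ast}$. We then use the explicit characteristic function of a real normal law and the uniqueness theorem for characteristic functions on $\Tt$.

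\emph{Direct implication.} Let $\gamma$ be Gaussian and fix $f\in E^{\ast}$.
\begin{itemize}
\item By definition, $\gamma\circ f^{-1}$ is a Gaussian law on $\Tt$, say $\mathscr{N}(m,s^{2})$ with $s\ge 0$; the case $s=0$ is the Dirac mass at $m$.
\item By the remark preceding the statement, $f\in L^{p}(E,\gamma)$ for every $p\geq 1$. The change of variables formula then gives
\begin{align*}
m=\int_{\Tt} t\,(\gamma\circ f^{-1})(\de t)=\int_E f\,\de\gamma=a_{\gamma}(f),
\end{align*}
and in the same way $s^{2}=\int_E (f-a_{\gamma}(f))^{2}\de\gamma=B_{\gamma}(f,f)$.
\item A second change of variables yields
\begin{align*}
\hat{\gamma}(f)=\int_{\Tt} e^{it}\,(\gamma\circ f^{-1})(\de t),
\end{align*}
which is the characteristic function of $\mathscr{N}(m,s^{2})$ evaluated at $1$, namely $\exp(im-\tfrac12 s^{2})$. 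This is exactly the claimed formula, and $\gamma\circ f^{-1}=\mathscr{N}(a_{\gamma}(f),B_{\gamma}(f,f))$ follows.
\end{itemize}

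\emph{Converse.} Assume $\hat{\gamma}(f)=\exp\big(ia(f)-\tfrac12B(f,f)\big)$ for all $f\in E^{\ast}$, with $a$ linear and $B$ bilinear, symmetric and nonnegative. Fix $f$ and let $\tau\in\Tt$.
\begin{itemize}
\item Apply the hypothesis to $\tau f\in E^{\ast}$. Linearity of $a$ and bilinearity of $B$ give
\begin{align*}
\int_{\Tt} e^{i\tau t}\,(\gamma\circ f^{-1})(\de t)=\hat{\gamma}(\tau f)=\exp\big(i\tau a(f)-\tfrac12\tau^{2}B(f,f)\big).
\end{align*}
\item The right-hand side is the characteristic function of $\mathscr{N}(a(f),B(f,f))$; nonnegativity of $B$ is what makes this a genuine, possibly degenerate, normal law.
\item By uniqueness of characteristic functions of Borel probability measures on $\Tt$, $\gamma\circ f^{-1}=\mathscr{N}(a(f),B(f,f))$. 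Hence $\gamma$ is Gaussian.
\end{itemize}

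It remains to identify the mean and the covariance.
\begin{itemize}
\item Integrating $t$ against the one-dimensional law gives $a_{\gamma}(f)=a(f)$.
\item Integrating $(t-a(f))^{2}$ gives $B_{\gamma}(f,f)=B(f,f)$ for every $f\in E^{\ast}$.
\item Both $B_{\gamma}$ and $B$ are symmetric bilinear forms that agree on the diagonal. The polarization identity $B(f,g)=\tfrac14\big(B(f+g,f+g)-B(f-g,f-g)\big)$ then gives $B_{\gamma}=B$ everywhere.
\end{itemize}

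I do not expect a serious obstacle; the argument is standard once everything is reduced to $\Tt$. The only points to check with care are two. First, the degenerate case $B(f,f)=0$ must be admitted as a Gaussian (Dirac) law. Second, identifying the covariance off the diagonal genuinely needs the polarization step, since the characteristic function only determines $B$ on the diagonal directly.
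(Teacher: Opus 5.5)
Your proof is correct and complete, and it is the standard argument: push forward to the one-dimensional laws $\gamma\circ f^{-1}$, read $\hat{\gamma}(\tau f)$ as their characteristic function, invoke uniqueness of characteristic functions on the real line, and recover $B_{\gamma}$ off the diagonal by polarization. The paper only recalls this theorem in the appendix and gives no proof, so there is no alternative route to compare against; the only points needing care are the degenerate (Dirac) case and the off-diagonal identification of the covariance, and you handle both.
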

\section*{Declarations}

\subsection*{Ethical Approval}
Not applicable. This study involves theoretical mathematical research and does not involve human subjects, animal experiments, or experimental data requiring ethical approval.

\subsection*{Funding}
A.M.H. has been supported by project PRIN 2022 ``understanding the LEarning process of QUantum Neural networks (LeQun)'', proposal code 2022WHZ5XH -- CUP J53D23003890006. 

\subsection*{Availability of Data and Materials}
Not applicable. The theoretical results and mathematical proofs presented in this manuscript are based on analytical derivations. No datasets were generated in this study.
\section*{Acknowledgments}
AMH would like to thank Giuseppe D'Onofrio for inspiring discussions that greatly influenced this work.

\bibliographystyle{unsrt}
\bibliography{bibliography}

\end{document}